\numberwithin{equation}{section}
\numberwithin{figure}{section}
\theoremstyle{plain}
\newtheorem{thm}{\protect\theoremname}[section]
  \theoremstyle{plain}
  \newtheorem{prop}[thm]{\protect\propositionname}
  \theoremstyle{plain}
  \newtheorem{algorithm}[thm]{\protect\algorithmname}
  \theoremstyle{remark}
  \newtheorem{rem}[thm]{\protect\remarkname}
  \theoremstyle{definition}
  \newtheorem{example}[thm]{\protect\examplename}
  \theoremstyle{definition}
  \newtheorem{defn}[thm]{\protect\definitionname}
  \theoremstyle{plain}
  \newtheorem{lem}[thm]{\protect\lemmaname}
\newcommand{\lin}{\mbox{\rm lin}}
\newcommand{\intr}{\mbox{\rm int}}
\newcommand{\aspan}{\mbox{\rm aff-span}}
\title[Set intersection: Supporting hyperplanes, quadratic programming]{Set intersection problems: Supporting hyperplanes and quadratic programming}
  \providecommand{\algorithmname}{Algorithm}
  \providecommand{\definitionname}{Definition}
  \providecommand{\examplename}{Example}
  \providecommand{\lemmaname}{Lemma}
  \providecommand{\propositionname}{Proposition}
  \providecommand{\remarkname}{Remark}
\providecommand{\theoremname}{Theorem}
\begin{document}
\begin{abstract}
We study how the supporting hyperplanes produced by the projection
process can complement the method of alternating projections and its
variants for the convex set intersection problem. For the problem
of finding the closest point in the intersection of closed convex
sets, we propose an algorithm that, like Dykstra's algorithm, converges
strongly in a Hilbert space. Moreover, this algorithm converges in
finitely many iterations when the closed convex sets are cones in
$\mathbb{R}^{n}$ satisfying an alignment condition. Next, we propose
modifications of the alternating projection algorithm, and prove its
convergence. The algorithm converges superlinearly in $\mathbb{R}^{n}$
under some nice conditions. Under a conical condition, the convergence
can be finite. Lastly, we discuss the case where the intersection
of the sets is empty.
\end{abstract}

\author{C.H. Jeffrey Pang}

\curraddr{Department of Mathematics\\ 
National University of Singapore\\ 
Block S17 05-10\\ 
10 Lower Kent Ridge Road\\ 
Singapore 119076 }

\email{matpchj@nus.edu.sg}

\date{\today{}}

\keywords{Dykstra's algorithm, best approximation problem, alternating projections,
quadratic programming, supporting hyperplanes, superlinear convergence}

\subjclass[2000]{90C30, 90C59, 47J25, 47A46, 47A50, 52A20, 41A50, 41A65, 46C05, 49J53,
65K10.}

\maketitle
\tableofcontents{}

\section{Introduction}

For finitely many closed convex sets $K_{1},\dots,K_{r}$ in a Hilbert
space $X$, the \emph{Set Intersection Problem }(SIP) is stated as:
\begin{equation}
\mbox{Find }x\in K:=\bigcap_{i=1}^{r}K_{i}\mbox{, where }K\neq\emptyset.\label{eq:SIP}
\end{equation}
 One assumption on the sets $K_{i}$ is that projecting a point in
$X$ onto each $K_{i}$ is a relatively easy problem. 

A popular method of solving the SIP is the \emph{Method of Alternating
Projections }(MAP), where one iteratively projects a point through
the sets $K_{i}$ to find a point in $K$. Another problem related
to the SIP is the \emph{Best Approximation Problem }(BAP): Find the
closest point to $x_{0}$ in $K$, that is, 
\begin{eqnarray}
 & \underset{x\in X}{\min} & \|x-x_{0}\|\label{eq:Proj-pblm}\\
 & \mbox{s.t. } & x\in K:=\bigcap_{i=1}^{r}K_{i}.\nonumber 
\end{eqnarray}
for closed convex sets $K_{i}$, $i=1,\dots,r$. One can easily construct
an example in $\mathbb{R}^{2}$ involving a circle and a line such
that the MAP converges to a point in $K$ that is not $P_{K}(x_{0})$.
Fortunately, Dykstra's algorithm \cite{Dykstra83,BD86} reduces the
problem of finding the projection onto $K$ to the problem of projecting
onto $K_{i}$ individually by adding correction vectors after each
iteration. It was rediscovered in \cite{Han88} using mathematical
programming duality. For more on the background and recent developments
of the MAP and its variants, we refer the reader to \cite{BB96_survey,BR09,EsRa11},
as well as \cite[Chapter 9]{Deustch01} and \cite[Subsubsection 4.5.4]{BZ05}.

We quote \cite{Deutsch01_survey}, where it is mentioned that the
MAP has found application in at least ten different areas of mathematics,
which include: (1) solving linear equations; (2) the Dirichlet problem
which has in turn inspired the \textquotedbl{}domain decomposition\textquotedbl{}
industry; (3) probability and statistics; (4) computing Bergman kernels;
(5) approximating multivariate functions by sums of univariate ones;
(6) least change secant updates; (7) multigrid methods; (8) conformal
mapping; (9) image restoration; (10) computed tomography. See also
\cite{Deutsch95} for more information.

One problem of the MAP and Dykstra's algorithm is slow convergence.
A few acceleration methods were explored. The papers \cite{GPR67,GK89,BDHP03}
explored the acceleration of the MAP using a line search in the case
where $K_{i}$ are linear subspaces of $X$. See \cite{Deutsch01_survey}
for a survey. One can easily rewrite the SIP as a \emph{Convex Inequality
Problem }(CIP):
\[
\mbox{Find }x\in\mathbb{R}^{n}\mbox{ satisfying }g(x)\leq0,
\]
where $g:\mathbb{R}^{n}\to\mathbb{R}^{r}$ is such that each $g_{i}:\mathbb{R}^{n}\to\mathbb{R}$,
where $i=1,\dots,r$, is convex: Just set $g_{i}(x)$ to be the distance
from $x$ to $K_{i}$. In the case where $X=\mathbb{R}^{n}$ and each
$g_{i}(\cdot)$ is differentiable with Lipschitz gradient, the papers
\cite{G-P98,G-P01} proved a superlinear convergent algorithm for
the CIP. They make use of the subgradients of $g(\cdot)$ to define
separating hyperplanes to the feasible set, and make use of quadratic
programming to achieve superlinear convergence. Another related work
is \cite{Kiwiel95}, where the interest is on problems where $r$,
the number of closed convex sets $K_{i}$, is large.

We elaborate on the quadratic programming approach. Given $x_{1}\in X$
and the projection $x_{2}=P_{K_{1}}(x_{1})$, provided $x_{2}\neq x_{1}$,
a standard result on supporting hyperplanes gives us $K_{1}\subset\left\{ x\mid\left\langle x_{1}-x_{2},x\right\rangle \leq\left\langle x_{1}-x_{2},x_{2}\right\rangle \right\} $.
The aim of this work is make use of the supporting hyperplanes generated
in the projection process to accelerate the convergence to a point
in $K$. A relaxation of \eqref{eq:Proj-pblm} is 
\begin{eqnarray}
 & \underset{x\in X}{\min} & \|x-x_{0}\|^{2}\label{eq:quad-prog}\\
 & \mbox{s.t.} & \left\langle a_{i},x\right\rangle \leq b_{i}\mbox{ for }i=1,\dots,k,\nonumber 
\end{eqnarray}
where each constraint $\left\langle a_{i},x\right\rangle \leq b_{i}$
corresponds to a supporting hyperplane obtained by the projection
operation onto one of the sets $K_{i}$, where $1\leq i\leq r$. Let
$S=\mbox{span}\{a_{i}\mid i=1,\dots,k\}$, and let $V=\{v_{1},\dots,v_{k^{\prime}}\}$
be a set of orthonormal vectors spanning $S$, where $k^{\prime}\leq k$.
We can write $a_{i}=\sum_{j=1}^{k^{\prime}}\alpha_{i,j}v_{j}$, $x_{0}=y_{0}+z_{0}$
and $x=\sum_{j=1}^{k^{\prime}}\lambda_{j}v_{j}+z$, where $y_{0}\in S$
and $z_{0},z\in S^{\perp}$. Then \eqref{eq:quad-prog} can be rewritten
as 
\begin{eqnarray}
 & \underset{\lambda\in\mathbb{R}^{k^{\prime}},z\in S^{\perp}}{\min} & \left\Vert \sum_{j=1}^{k^{\prime}}\lambda_{j}v_{j}-y_{0}\right\Vert ^{2}+\|z_{0}-z\|^{2}\label{eq:quad-prog-2}\\
 & \mbox{s.t.} & \left\langle \sum_{j=1}^{k^{\prime}}\alpha_{i,j}v_{j},\sum_{j=1}^{k^{\prime}}\lambda_{j}v_{j}\right\rangle \leq b_{i}\mbox{ for }i=1,\dots,k.\nonumber 
\end{eqnarray}
Therefore \eqref{eq:quad-prog} can be easily solved using convex
quadratic programming, especially when $k$ and $k^{\prime}$ are
small. (See for example \cite[Chapter 16]{NW06}.) 

The quadratic programming formulation \eqref{eq:quad-prog} gathers
information from the supporting hyperplanes to many of the closed
convex sets $K_{i}$, and so is a good approximation to \eqref{eq:Proj-pblm};
the intersection of the halfspaces defined by the supporting hyperplanes
can produce a set that is a better approximation of $K$ than each
$K_{i}$ taken singly. Hence there is good reason to believe that
\eqref{eq:quad-prog} can achieve better convergence than simple variants
of the MAP. As Figure \ref{fig:alt-proj-compare} illustrates, the
supporting hyperplanes can provide a good outer estimate of the intersection
$K_{i}$. Furthermore, as more constraints are added in the quadratic
programming formulation \eqref{eq:quad-prog}, it is possible to use
warm starts from previous iterations to accelerate convergence. In
this paper, we shall only pursue the idea of supplementing the MAP
with supporting hyperplanes and quadratic programming, but not on
the details of the quadratic programming subproblem.

\begin{figure}[h]
\includegraphics[scale=0.4]{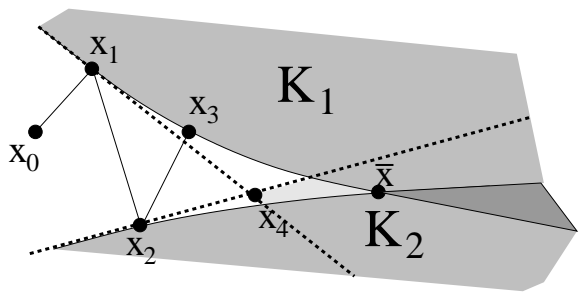}

\caption{\label{fig:alt-proj-compare}The method of alternating projections
on two convex sets $K_{1}$ and $K_{2}$ in $\mathbb{R}^{2}$ with
starting iterate $x_{0}$ arrives at $x_{3}$ in three iterations.
But the point $x_{4}$ generated by the cutting planes of $K_{1}$
and $K_{2}$ at $x_{1}$ and $x_{2}$ respectively is much closer
to the point $\bar{x}$, especially when the boundary of $K_{1}$
and $K_{2}$ have fewer second order effects and when the angle between
the boundary of $K_{1}$ and $K_{2}$ is small. On the other hand,
the point $x_{3}$ is ruled out by the supporting hyperplane of $K_{1}$
passing through $x_{2}$.}
\end{figure}

We remark that the idea using supporting hyperplanes to approximate
the set $K$ was also considered in \cite{BCRZ03}, but their motivation
was to make use of hyperplanes to simplify the projections onto the
sets $K_{i}$ rather than accelerating convergence.

\subsection{Contributions of this paper}

In this paper, we prove theoretical properties of the alternating
projection method supplemented with the insight on supporting hyperplanes.
Sections \ref{sec:closest-pt} to \ref{sec:Infeasibility} are mostly
independent of each other.

First, we propose Algorithm \ref{alg:closest-pt-proj} for the Best
Approximation Problem \eqref{eq:Proj-pblm} in Section \ref{sec:closest-pt}.
We prove norm convergence, and the finite convergence of Algorithm
\ref{alg:closest-pt-proj} with \eqref{eq:mass-proj} when $K_{l}\subset\mathbb{R}^{n}$
have a local conic structure and satisfy a normal condition. We also
show that the normal condition cannot be dropped.

In Section \ref{sec:mod-MAP}, we propose modifications of the alternating
projection algorithm for the Set Intersection Problem \eqref{eq:SIP},
and prove their convergence. We also prove the superlinear convergence
of a modified alternating projection algorithm in $\mathbb{R}^{2}$.

In Section \ref{sec:Error-estimates}, we prove the most striking
result of this paper, which is the superlinear convergence of an algorithm
for the Set Intersection Problem \eqref{eq:SIP} in $\mathbb{R}^{n}$
under reasonable conditions. The convergence can be finite if there
is a local conic structure at the limit point. The proofs of superlinear
convergence are quite different from the proof in Section \ref{sec:mod-MAP}.

Lastly, in Section \ref{sec:Infeasibility}, we discuss the behavior
of Algorithm \ref{alg:closest-pt-proj} in the case when the intersection
of the closed convex sets is empty.

\subsection{Notation}

We shall let $\mathbb{B}_{r}(x)$ be the closed ball with radius $r$
and center $x$. The projection operation onto a set $C$ is denoted
by $P_{C}(\cdot)$. We also make use of standard constructs in convex
analysis.

\section{Some useful results}

In this section, we recall or prove some useful results that will
be useful in two or more of the sections later. The reader may wish
to skip this section and come back to refer to the results as needed.

The result below shows that separating hyperplanes near a point in
a convex set behave well.
\begin{thm}
\label{thm:radiality}(Supporting hyperplane near a point) Suppose
$C\subset\mathbb{R}^{n}$ is convex, and let $\bar{x}\in C$. Then
for any $\epsilon>0$, there is a $\delta>0$ such that for any point
$x\in[\mathbb{B}_{\delta}(\bar{x})\cap C]\backslash\{\bar{x}\}$ and
supporting hyperplane $A$ of $C$ with unit normal $v\in N_{C}(x)$
at the point $x$, we have $\frac{d(\bar{x},A)}{\|x-\bar{x}\|}\leq\epsilon$.

Since $d(\bar{x},A)=-\left\langle v,\bar{x}-x\right\rangle $, the
conclusion of this result can be replaced by $0\leq\frac{-\left\langle v,\bar{x}-x\right\rangle }{\|\bar{x}-x\|}\leq\epsilon$
instead.\end{thm}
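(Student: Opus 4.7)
The plan is to argue by contradiction, using a compactness argument together with the outer semicontinuity of the normal cone mapping for convex sets. First note the lower bound $-\langle v,\bar{x}-x\rangle \geq 0$ is free: since $v \in N_C(x)$ and $\bar{x} \in C$, the definition of the normal cone for convex sets gives $\langle v,\bar{x}-x\rangle \leq 0$. So the real content is the upper bound.

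Suppose the upper bound fails. Then there exist $\epsilon_0>0$ and sequences $x_k \in [C \cap \mathbb{B}_{1/k}(\bar{x})]\setminus\{\bar{x}\}$ and unit vectors $v_k \in N_C(x_k)$ with
\[
\frac{-\langle v_k,\bar{x}-x_k\rangle}{\|x_k-\bar{x}\|} > \epsilon_0.
\]
Writing $u_k := (x_k-\bar{x})/\|x_k-\bar{x}\|$, this is the same as $\langle v_k, u_k\rangle > \epsilon_0$. Since both $\{v_k\}$ and $\{u_k\}$ lie on the unit sphere of $\mathbb{R}^n$, I would invoke compactness (here is where finite-dimensionality is used) to extract a subsequence along which $v_k \to v$ and $u_k \to u$ with $\|v\|=\|u\|=1$, and take limits in the inequality to obtain $\langle v,u\rangle \geq \epsilon_0 > 0$.

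The contradiction will come from polarity of the tangent and normal cones at $\bar{x}$. Since $x_k \in C$ with $x_k \to \bar{x}$, the limiting direction $u$ lies in the (contingent = tangent) cone $T_C(\bar{x})$. For $v$, I would show $v \in N_C(\bar{x})$ by plugging any $y \in C$ into the defining inequality $\langle v_k,y-x_k\rangle \leq 0$ and passing to the limit, which gives outer semicontinuity of $N_C(\cdot)$ directly from convexity. Polarity then yields $\langle v,u\rangle \leq 0$, contradicting $\langle v,u\rangle \geq \epsilon_0$.

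No step looks genuinely difficult; the subtle point is just remembering that for convex $C$ the normal cone mapping has closed graph, so that the sequential limit $v$ of $v_k \in N_C(x_k)$ with $x_k \to \bar{x}$ indeed lies in $N_C(\bar{x})$. The compactness extraction requires $\mathbb{R}^n$; in a general Hilbert space one would need to work with weak limits, which would complicate the polarity step, and presumably explains why the statement is restricted to finite dimensions.
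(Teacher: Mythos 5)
Your proof is correct. Every step checks out: the lower bound is immediate from $v\in N_C(x)$ and $\bar{x}\in C$; the contradiction sequences exist if the upper bound fails; compactness of the unit sphere in $\mathbb{R}^n$ yields the limits $v$ and $u$; the closed-graph property of $N_C(\cdot)$ for convex sets (obtained exactly as you describe, by passing to the limit in $\langle v_k,y-x_k\rangle\le 0$) puts $v$ in $N_C(\bar{x})$; and the polarity $T_C(\bar{x})=N_C(\bar{x})^{-}$ gives $\langle v,u\rangle\le 0$, contradicting $\langle v,u\rangle\ge\epsilon_0$. (In fact you can skip the tangent cone entirely: once $v\in N_C(\bar{x})$, each $\langle v,u_k\rangle=\langle v,x_k-\bar{x}\rangle/\|x_k-\bar{x}\|\le 0$ directly, since $x_k\in C$.)

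The route is genuinely different from the paper's. The paper first asserts that, for small $\delta$, every unit normal $v\in N_C(x)$ at $x\in\mathbb{B}_\delta(\bar{x})\cap C$ lies within $\epsilon_1$ of some unit normal $\bar{v}\in N_C(\bar{x})$ --- which is the same outer-semicontinuity-plus-compactness fact underlying your argument --- and then carries out an explicit planar construction: it projects onto the two-dimensional affine space containing $\bar{v}$, $x$ and $\bar{x}$, bounds the relevant angle by $2\sin^{-1}(\epsilon_1/2)$, and concludes $d(\bar{x},A)/\|x-\bar{x}\|\le\sin\big(2\sin^{-1}(\epsilon_1/2)\big)$ with the aid of a figure. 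What the paper's version buys is an explicit quantitative relation between the normal-cone perturbation $\epsilon_1$ and the final $\epsilon$; what your version buys is a shorter, figure-free argument that replaces all the trigonometry with a single appeal to tangent--normal polarity at $\bar{x}$, at the cost of being purely qualitative (no effective $\delta(\epsilon)$, which the paper does not need anyway). Your closing remark about the finite-dimensional restriction is also apt: both proofs lean on norm-compactness of the sphere.
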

\begin{proof}
We refer to Figure \ref{fig:proof-angle}. For the given $\epsilon_{1}>0$,
there is some $\delta>0$ such that if $x\in\mathbb{B}_{\delta}(\bar{x})\cap C$
and $v\in N_{C}(x)$ is a unit vector, then there is some unit vector
$\bar{v}\in N_{C}(\bar{x})$ such that $\|v-\bar{v}\|<\epsilon_{1}$.
This means that the angle between $v$ and $\bar{v}$ is at most $2\sin^{-1}(\epsilon_{1}/2)$. 

One can easily check that $x-\bar{x}$ is not a multiple of $\bar{v}$.
Consider the two dimensional affine space that contains the vector
$\bar{v}$ and the points $x$ and $\bar{x}$, and project the point
$x+v$ onto this affine space. Let this projection be $x+v^{\prime}$.
It is easy to check that the angle between $v^{\prime}$ and $\bar{v}$,
marked as $\alpha$ in Figure \ref{fig:proof-angle}, is bounded from
above by $2\sin^{-1}(\epsilon_{1}/2)$. (The lines with arrows at
both ends passing through $x$ and $\bar{x}$ respectively represent
the intersection of supporting hyperplanes with the two dimensional
affine space.)

The angle $\theta$ in Figure \ref{fig:proof-angle} is an upper bound
on the angle between $x-\bar{x}$ and the supporting hyperplane $A$,
and is easily checked to satisfy $\theta\leq\alpha$. We thus have
\[
\frac{d(\bar{x},A)}{\|x-\bar{x}\|}\leq\sin\theta\leq\sin\alpha\leq\sin\big(2\sin^{-1}(\epsilon_{1}/2)\big).
\]
So for a given $\epsilon>0$, if $\epsilon_{1}$ were chosen to be
such that $\sin\big(2\sin^{-1}(\epsilon_{1}/2)\big)<\epsilon$, then
we are done.
\end{proof}
\begin{figure}[h]
\includegraphics[scale=0.4]{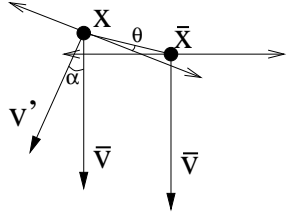}

\caption{\label{fig:proof-angle}Diagram in the proof of Theorem \ref{thm:radiality}. }
\end{figure}

Next, we recall Moreau's Theorem, and remark on how it will be used.
For a convex cone $C$, we denote its negative polar cone by $C^{-}$. 
\begin{thm}
(Moreau's Decomposition Theorem) Suppose $C\subset\mathbb{R}^{n}$
is a closed convex cone. Then for any $x\in\mathbb{R}^{n}$, we can
write $x=P_{C}(x)+P_{C^{-}}(x)$, and moreover, $\left\langle P_{C}(x),P_{C^{-}}(x)\right\rangle =0$.
\end{thm}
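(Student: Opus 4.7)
The plan is to use the variational characterization of the projection onto a closed convex set, namely that $y = P_C(x)$ if and only if $y \in C$ and $\langle x - y, c - y\rangle \leq 0$ for every $c \in C$, and then to exploit the cone structure of $C$ to upgrade this inequality into an equality.

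First, I would let $y := P_C(x)$ and $z := x - y$. Applying the projection inequality and using that $C$ is a cone, I would plug in the two test points $c = 0 \in C$ and $c = 2y \in C$ to deduce that $\langle z, -y\rangle \leq 0$ and $\langle z, y\rangle \leq 0$, hence $\langle z, y\rangle = 0$. This immediately gives the orthogonality claim $\langle P_C(x), P_{C^-}(x)\rangle = 0$ once the second identification is established, and it also upgrades the projection inequality for every $c \in C$ to $\langle z, c\rangle \leq \langle z, y\rangle = 0$, which says precisely $z \in C^-$.

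Next, I need to show $z = P_{C^-}(x)$. Since $z \in C^-$, by the variational characterization applied now to the closed convex cone $C^-$, it suffices to check that $\langle x - z, c' - z\rangle \leq 0$ for all $c' \in C^-$. But $x - z = y \in C$, and $\langle y, z\rangle = 0$ from the previous step, so this inequality reduces to $\langle y, c'\rangle \leq 0$, which holds because $y \in C$ and $c' \in C^-$. Hence $z = P_{C^-}(x)$, so $x = y + z = P_C(x) + P_{C^-}(x)$.

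The only delicate point is the initial step of choosing the test points $0$ and $2y$ in $C$; this is where the cone hypothesis is essential (for a general convex set one cannot guarantee both are feasible), and it is what promotes the one-sided projection inequality into the equality $\langle z, y\rangle = 0$ that drives the whole argument. Everything else is a straightforward verification using the projection characterization and the definition of the polar cone.
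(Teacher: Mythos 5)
Your proof is correct and complete: the choice of test points $0$ and $2y$ in the cone to force $\langle x-P_C(x),P_C(x)\rangle=0$, the deduction that $x-P_C(x)\in C^-$, and the verification via the variational characterization that $x-P_C(x)$ is indeed $P_{C^-}(x)$ are all sound, and this is the standard argument for Moreau's theorem. The paper itself states this result without proof, merely recalling it as a classical fact, so there is no in-paper argument to compare against; your writeup would serve as a valid self-contained justification.
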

The following result will be used in Theorems  \ref{thm:Finite-conv-cones}
and \ref{thm:Superlinear-conv}.
\begin{prop}
\label{rem:Use-Moreau}(Projection onto cones) Suppose $C\subset\mathbb{R}^{n}$
is a closed convex cone. Then the supporting hyperplane formed by
projecting a point $y$ onto $C$ would contain the origin.\end{prop}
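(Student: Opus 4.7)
The plan is to reduce the claim to a single orthogonality identity and then to read that identity off directly from Moreau's Decomposition Theorem, which has just been recalled. First I would set $x := P_{C}(y)$. The standard characterization of projections onto closed convex sets says that $y - x \in N_{C}(x)$, so the associated supporting hyperplane to $C$ at $x$ is
\[
H = \{z \in \mathbb{R}^{n} \mid \langle y - x, z \rangle = \langle y - x, x \rangle \},
\]
and $C \subset \{z \mid \langle y - x, z \rangle \leq \langle y - x, x \rangle\}$. The proposition then amounts to showing $0 \in H$, i.e., to showing that $\langle y - x, x \rangle = 0$.

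At this point the whole content of the proposition is that last inner product identity, and the key step is to recognize it as precisely the orthogonality clause of Moreau's theorem. Since $C$ is a closed convex cone, Moreau's Decomposition Theorem gives $y = P_{C}(y) + P_{C^{-}}(y) = x + P_{C^{-}}(y)$ with $\langle P_{C}(y), P_{C^{-}}(y) \rangle = 0$. Rewriting, $y - x = P_{C^{-}}(y)$ and hence
\[
\langle y - x, x \rangle = \langle P_{C^{-}}(y), P_{C}(y) \rangle = 0,
\]
which is exactly the condition $0 \in H$. This concludes the argument.

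The main (and only) obstacle is conceptual rather than technical: one must pick the correct normal vector defining the supporting hyperplane, namely $y - x$ coming from the projection residual, so that Moreau's orthogonality relation speaks directly about the hyperplane. The one degenerate case to mention is $y \in C$, in which case $x = y$ and $y - x = 0$, so there is no canonical nonzero normal and the statement is vacuous; the substantive content is the case $y \notin C$. No further calculation is needed beyond invoking the decomposition theorem stated just above.
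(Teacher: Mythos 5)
Your argument is correct and is essentially the paper's own proof: both identify $y-P_{C}(y)=P_{C^{-}}(y)$ via Moreau's decomposition and use the orthogonality $\left\langle P_{C}(y),P_{C^{-}}(y)\right\rangle =0$ to show the hyperplane's offset vanishes, so it passes through the origin. No gaps; the remark about the degenerate case $y\in C$ is a harmless addition.
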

\begin{proof}
By Moreau's Theorem, the projection $P_{C}(y)$ satisfies 
\[
y=P_{C}(y)+P_{C^{-}}(y)\mbox{ and }\left\langle P_{C}(y),P_{C^{-}}(y)\right\rangle =0.
\]
The supporting hyperplane produced by projecting $y$ onto $C$ would
be 
\[
\left\{ x\mid\left\langle x,y-P_{C}(y)\right\rangle \leq\left\langle P_{C}(y),y-P_{C}(y)\right\rangle \right\} ,
\]
which equals $\left\{ x\mid\left\langle x,P_{C^{-}}(y)\right\rangle \leq0\right\} $.
It is clear that the origin is in the supporting hyperplane.
\end{proof}

\section{\label{sec:closest-pt}Convergence for the Best Approximation Problem}

In this section, we discuss algorithms for the Best Approximation
Problem \eqref{eq:Proj-pblm}. We describe Algorithm \ref{alg:closest-pt-proj},
and show strong convergence to the closest point in the intersection
of the closed convex sets (Theorem \ref{thm:Strong-conv-result}).
Furthermore, in the finite dimensional case where the sets have a
local conic structure, Algorithm \ref{alg:closest-pt-proj} with \eqref{eq:mass-proj}
converges in finitely many iterations (Theorem \ref{thm:Finite-conv-cones})
under a normal condition \eqref{eq:normal-condn}. We give an example
to show that the condition \eqref{eq:normal-condn} cannot be dropped.

For each $n\in\mathbb{N}$, let $[n]$ denote ``$n$ mod $r$'';
that is, 
\[
[n]:=\{1,2,\dots,r\}\cap\{n-kr\mid k=0,1,2,\dots\}.
\]

We present our algorithm for this section.
\begin{algorithm}
\label{alg:closest-pt-proj}(Best approximation) For a point $x_{0}$
and closed convex sets $K_{l}$, $l=1,2,\dots,r$, of a Hilbert space
$X$, find the closest point to $x_{0}$ in $K:=\cap_{l=1}^{r}K_{l}$. 

\textbf{Step 0: }Let $i=1$.

\textbf{Step 1:} Choose $J_{i}\subset\{1,\dots,r\}$. Some examples
of $J_{i}$ are\begin{subequations}
\begin{eqnarray}
J_{i} & = & \{[i]\},\label{eq:cyclic-proj}\\
\mbox{and }J_{i} & = & \{1,\dots,r\}.\label{eq:mass-proj}
\end{eqnarray}
\end{subequations}For $j\in J_{i}$, define $x_{i}^{(j)}\in X$,
$a_{i}^{(j)}\in X$ and $b_{i}^{(j)}\in\mathbb{R}$ by 
\begin{eqnarray*}
x_{i}^{(j)} & = & P_{K_{j}}(x_{i-1}),\\
a_{i}^{(j)} & = & x_{i-1}-x_{i}^{(j)},\\
\mbox{and }b_{i}^{(j)} & = & \left\langle a_{i}^{(j)},x_{i}^{(j)}\right\rangle .
\end{eqnarray*}
Define the set $F_{i}\subset X$ by
\begin{equation}
F_{i}:=\left\{ x\mid\left\langle a_{l}^{(j)},x\right\rangle \leq b_{l}^{(j)}\mbox{ for all }l=1,\dots,i\mbox{ and }j\in J_{l}\right\} .\label{eq:dykstra-all-planes}
\end{equation}
Let $x_{i}=P_{F_{i}}(x_{0})$.

\textbf{Step 2:} Set $i\leftarrow i+1$, and go back to step 1. 
\end{algorithm}
When $J_{i}$ is chosen using \eqref{eq:cyclic-proj} and $x_{i}^{([i])}\in K_{[i]}$
so that $x_{i}^{([i])}=P_{K_{[i]}}(x_{i-1})$, then $a_{i}^{([i])}=0$
and $b_{i}^{([i])}=0$, and the algorithm stalls for one step. These
values of $a_{i}^{([i])}$ and $b_{i}^{([i])}$ are still valid, though
any implementation should treat this case separately. When the algorithm
stalls for $r$ iterations in a row, then we have found the closest
point from $x_{0}$ to $\bigcap_{l=1}^{r}K_{l}$.
\begin{rem}
\label{rem:proj-to-2nd-order}(Projecting to sets with greater second
order behavior) In Step 1 of Algorithm \ref{alg:closest-pt-proj},
one needs to choose $J_{i}$. When the size of the quadratic programs
are small and easy to solve, it would be ideal to choose $J_{i}$
so that $|J_{i}|=1$. The cyclic choice in \eqref{eq:cyclic-proj}
is a natural choice. But as remarked in Figure \ref{fig:alt-proj-compare},
one factor in our strategy is the second order behavior of the sets
$K_{l}$. Another strategy is to record the distances in the most
recent projections to the set $K_{l}$, and choose $J_{i}$ to contain
the index where the highest distance was recorded. In the case where
one of the sets $K_{l}$ is a subspace (and has fewer second order
effects), the computations would be focused on the other sets. However,
one may want to ensure that all sets are projected to every once in
a while so that Algorithm \ref{alg:closest-pt-proj} is not fooled
in regions where the boundary is locally but not globally affine.
Possible strategies are: 
\begin{eqnarray*}
 &  & \mbox{There exists }p\mbox{ such that for all }\bar{i},\mbox{ }\bigcup_{i=\bar{i}}^{\bar{i}+p}J_{i}=\{1,\dots,r\},\\
 & \mbox{or} & \mbox{For each }l=1,\dots,r\mbox{, there are infinitely many }J_{i}\mbox{ containing }l.
\end{eqnarray*}

\end{rem}
The following theorem addresses the convergence of Algorithm \ref{alg:closest-pt-proj}.
This theorem can be compared to the Boyle-Dykstra Theorem \cite{BD86},
which establishes the convergence of Dykstra's algorithm \cite{Dykstra83}. 
\begin{thm}
\label{thm:Strong-conv-result}(Strong convergence of Algorithm \ref{alg:closest-pt-proj})
For any starting point $x_{0}$, the sequences $\{x_{i}\}$ produced
by Algorithm \ref{alg:closest-pt-proj} using \eqref{eq:cyclic-proj}
or \eqref{eq:mass-proj} converge strongly to $P_{K}(x_{0})$.\end{thm}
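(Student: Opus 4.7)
The plan is to exploit the nested structure of the sets $F_i$ together with the fact that each $F_i$ is an outer approximation of $K$.

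\medskip

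First I would record two structural facts: $F_{i+1}\subset F_i$ (more constraints are added at each step) and $K\subset F_i$ for every $i$, because each inequality $\langle a_l^{(j)},x\rangle\le b_l^{(j)}$ comes from a supporting hyperplane of $K_j\supset K$. Combined with $x_i=P_{F_i}(x_0)$, these two facts give
\[
\|x_i-x_0\|\le \|x_{i+1}-x_0\|\le \|P_K(x_0)-x_0\|,
\]
so $\|x_i-x_0\|$ is non-decreasing and bounded, hence convergent.

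\medskip

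Next I would show $\{x_i\}$ is Cauchy. For $j\ge i$, $x_j\in F_j\subset F_i$, and the variational characterization $\langle x_0-x_i,x_j-x_i\rangle\le 0$ of the projection onto the convex set $F_i$ yields
\[
\|x_j-x_i\|^2 + \|x_i-x_0\|^2 \le \|x_j-x_0\|^2.
\]
Since $\|x_i-x_0\|^2$ converges, the right-hand side minus $\|x_i-x_0\|^2$ tends to $0$, so $\{x_i\}$ is Cauchy and therefore converges strongly to some $x^*\in X$. This step, which uses only Hilbert-space geometry, is the workhorse of the argument.

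\medskip

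The main obstacle is identifying the limit $x^*$ as a point in $K$, since the algorithm may not project onto every $K_j$ at every iteration. I would proceed as follows. Fix $i$ and $j\in J_i$, and set $v:=x_{i-1}-x_i^{(j)}=a_i^{(j)}$. The constraint $\langle a_i^{(j)},x_i\rangle\le b_i^{(j)}=\langle a_i^{(j)},x_i^{(j)}\rangle$ rewrites as $\langle v,x_i-x_i^{(j)}\rangle\le 0$, i.e.\ $\langle v,v+(x_i-x_{i-1})\rangle\le 0$, giving
\[
\|x_{i-1}-x_i^{(j)}\| = \|v\| \le \|x_i-x_{i-1}\|.
\]
Since $\{x_i\}$ is Cauchy, $\|x_i-x_{i-1}\|\to 0$, hence $x_i^{(j)}-x_{i-1}\to 0$ and therefore $x_i^{(j)}\to x^*$ along every sequence with $j\in J_i$. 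Since $x_i^{(j)}\in K_j$ and $K_j$ is closed, the limit lies in $K_j$: in case \eqref{eq:mass-proj} this is immediate for every $j$, and in case \eqref{eq:cyclic-proj} it follows by restricting to the subsequence with $[i]=j$. Thus $x^*\in\bigcap_{j=1}^r K_j=K$.

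\medskip

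Finally, passing $\|x_i-x_0\|\le\|P_K(x_0)-x_0\|$ to the limit gives $\|x^*-x_0\|\le\|P_K(x_0)-x_0\|$, and since $x^*\in K$ the uniqueness of the nearest point forces $x^*=P_K(x_0)$. The only delicate point is the Cauchy estimate above; once that is in hand, the rest is a direct chase of the supporting-hyperplane inequalities.
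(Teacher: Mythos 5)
Your proof is correct, and it takes a genuinely different route from the one in the paper. The paper argues through weak cluster points: it shows that $\|x_i-x_0\|$ is nondecreasing and convergent, that any weak cluster point $z$ satisfies $\|z-x_0\|=\lim_i\|x_i-x_0\|$ (so weak cluster points are in fact strong subsequential limits), and then proves $z\in K$ by contradiction, using the continuity of the separating-hyperplane data $y\mapsto(a_y,b_y)$ at $y=z$ to show that an iterate near $z\notin K_{l^*}$ would be pushed strictly farther from $x_0$ than $\|z-x_0\|$. You instead prove strong convergence of the whole sequence directly: the nesting $F_{j}\subset F_i$ for $j\ge i$ together with the variational inequality $\langle x_0-x_i,x_j-x_i\rangle\le0$ gives the Cauchy estimate $\|x_j-x_i\|^2\le\|x_j-x_0\|^2-\|x_i-x_0\|^2$, which is the classical argument for outer-approximation schemes of Haugazeau type. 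Your identification of the limit is also different and more quantitative: the inequality $\|x_{i-1}-x_i^{(j)}\|\le\|x_i-x_{i-1}\|$, extracted from the single constraint $\langle a_i^{(j)},x_i\rangle\le b_i^{(j)}$ via Cauchy--Schwarz, shows that the individual projections $x_i^{(j)}$ track the iterates, so the limit lies in each $K_j$ by closedness (restricting to the subsequence $[i]=j$ in the cyclic case). What your route buys is that it avoids weak compactness entirely, yields convergence of the full sequence in one stroke rather than via uniqueness of cluster points, and replaces the paper's qualitative continuity argument in its Step 2 with a one-line estimate; what the paper's route emphasizes is the parallel with the Boyle--Dykstra convergence proof.
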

\begin{proof}
We shall only prove the result for the choice \eqref{eq:cyclic-proj},
since the proof for \eqref{eq:mass-proj} is similar. By considering
a translation if necessary, we can let $x_{0}$ be $0$. We can also
assume that $0\notin K$. The iterates $x_{i}$ satisfy $\|x_{i}\|\leq d(0,K)$,
so $\{x_{i}\}$ has a weak cluster point $z$. Since $x_{i}$ are
the closest point from $0$ to $F_{i}$, and 
\begin{equation}
F_{i+1}\subset F_{i}\mbox{ for all }i,\label{eq:F-supset}
\end{equation}
we see that $\|x_{i}\|$ is an increasing sequence, so $M:=\lim_{i\to\infty}\|x_{i}\|$
exists.

\textbf{\uline{Step 1: $z$ is actually a strong cluster point.}}\textbf{
}It is clear that $\lim_{i\to\infty}\|x_{i}\|\geq\|z\|$. We only
need to prove that 
\begin{equation}
\|z\|=\lim_{i\to\infty}\|x_{i}\|,\label{eq:limits-of-norms}
\end{equation}
 since this condition together with the weak convergence of the subsequence
of $x_{i}$ implies the strong convergence to $z$. Suppose instead
that $\lim_{i\to\infty}\|x_{i}\|>\|z\|$. Then there is some $k$
such that $\|x_{k}\|>\|z\|$. By \eqref{eq:F-supset}, we have, for
all $i>k$, 
\[
\left\langle x_{k},x_{i}\right\rangle \geq\left\langle x_{k},x_{k}\right\rangle =\|x_{k}\|^{2}>\|x_{k}\|\|z\|\geq\left\langle x_{k},z\right\rangle ,
\]
contradicting $z$ being a weak cluster point of $\{x_{i}\}$. Therefore
$z$ is a strong cluster point of $\{x_{i}\}$. 

\textbf{\uline{}}\textbf{\uline{Step 2: Any $z$ is in $K$.}}
Suppose on the contrary that $z\notin K$. Then there is some $l^{*}$
such that $z\notin K_{l^{*}}$, or $P_{K_{l^{*}}}(z)\neq z$. Algorithm
\ref{alg:closest-pt-proj} generates a hyperplane that separates $z$
from $K_{l^{*}}$. The halfspace $\{x\mid\left\langle a_{z},x\right\rangle \leq b_{z}\}$
separates $z$ and $K$, where for $y\in X$, $a_{y}$ and $b_{y}$
are defined by 
\[
a_{y}=y-P_{K_{l^{*}}}(y),b_{z}=\left\langle y-P_{K_{l^{*}}}(y),P_{K_{l^{*}}}(y)\right\rangle .
\]
The distance $D$ from $0$ to the intersection of halfspaces 
\[
\left\{ x\mid\left\langle -z,x\right\rangle \leq-\|z\|^{2}\mbox{ and }\left\langle a_{z},x\right\rangle \leq b_{z}\right\} 
\]
would satisfy $D>\|z\|$.

Next, the variables $a_{y}$ and $b_{y}$ depend continuously on the
parameter $y$, at $y=z$. This means that if $x_{i}$ is sufficiently
close to $z$ and $[i]=l^{*}$, then the distance $d(0,x_{i+1})$
would be sufficiently close to $D$. This would mean that $\|x_{i}\|>\|z\|$
for $i$ large enough, which is a contradiction to \eqref{eq:limits-of-norms}.
Thus $z\in K$ as needed.

\textbf{\uline{Step 3:}}\uline{ }\textbf{\uline{$z=P_{K}(x_{0})$.}}
To see this, observe that $z\in K$ implies that $d(0,K)\leq\|z\|$.
The fact that $\|z\|=\lim_{i\to\infty}\|x_{i}\|$ from step 1 gives
$d(0,K)=\|z\|$.

Thus we are done.\end{proof}
\begin{rem}
(Reducing number of supporting hyperplanes in defining $F_{i}$) In
the proof of Theorem \ref{thm:Strong-conv-result}, step 1 relies
on the fact that $F_{i+1}\subset F_{i}$ for all $i$ in the choice
of $F_{i}$ in \eqref{eq:dykstra-all-planes}. If $X=\mathbb{R}^{n}$,
then step 1 of the proof would be unnecessary, but the sequence $\{\|x_{i}-x_{0}\|\}$
needs to be increasing in order for step 2 to work. This can be enforced
by adding the hyperplane with normal $(x_{0}-x_{i-1})$ through $x_{i-1}$
in constructing $F_{i}$. To ensure that each quadratic programming
problem that needs to be solved is easy, the polyhedron $F_{i}$ can
be chosen such that the number of inequalities that define $F_{i}$
is small. One can take only the active hyperplanes in solving the
projection problem $x_{i}=P_{F_{i}}(x_{0})$, or by aggregating some
of the active hyperplanes to one active hyperplane when building up
the polyhedron $F_{i}$. 
\end{rem}
In the case where $K_{j}$ are cones, each supporting hyperplane contains
the point $0$. This means that there are no second order effects
near $0$, which in turn gives fast convergence in $\mathbb{R}^{n}$.
\begin{thm}
\label{thm:Finite-conv-cones}(Finite convergence for conical problems
in $\mathbb{R}^{n}$) For Algorithm \ref{alg:closest-pt-proj} with
\eqref{eq:mass-proj}, suppose that $X=\mathbb{R}^{n}$. Convergence
is guaranteed by Theorem \ref{thm:Strong-conv-result}. Suppose $P_{K}(x_{0})=\bar{x}$,
and $K_{j}$ are such that  
\begin{equation}
\mbox{For some }\bar{\epsilon}>0\mbox{, }[K_{j}-\bar{x}]\cap\bar{\epsilon}\mathbb{B}=T_{K_{j}}(\bar{x})\cap\bar{\epsilon}\mathbb{B}\mbox{ for }j=1,\dots,r,\label{eq:cone-condn}
\end{equation}
and 
\begin{equation}
x_{0}-\bar{x}\in\intr\big(N_{K}(\bar{x})\big).\label{eq:normal-condn}
\end{equation}
Then Algorithm \ref{alg:closest-pt-proj} with \eqref{eq:mass-proj}
converges to $\bar{x}$ in finitely many iterations.\end{thm}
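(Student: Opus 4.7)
The plan is to use Theorem \ref{thm:Strong-conv-result} to get $x_i \to \bar x$, then enter a ``cone regime'' in which each freshly generated supporting hyperplane passes through $\bar x$, and finally argue by contradiction that the iterates must exactly reach $\bar x$ using the interior condition \eqref{eq:normal-condn}. Since $x_l \to \bar x$ and projections are nonexpansive with $\bar x \in K_j$, there exists $i_0$ such that for all $l \geq i_0$ and all $j$ both $x_{l-1}$ and $x_l^{(j)} = P_{K_j}(x_{l-1})$ lie in $\mathbb{B}_{\bar\epsilon}(\bar x)$. By \eqref{eq:cone-condn} these projections coincide with projections onto $\bar x + T_{K_j}(\bar x)$, so Proposition \ref{rem:Use-Moreau} applied to the translated cone forces $\langle a_l^{(j)},\bar x\rangle = b_l^{(j)}$, and Moreau's theorem applied to $T_{K_j}(\bar x)$ and $N_{K_j}(\bar x)$ forces $a_l^{(j)} \in N_{K_j}(\bar x)$. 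Together with $\bar x \in K \subseteq K_j$, this gives $\bar x \in F_i$ for every $i$, and $N_{F_i}(\bar x)$ coincides with the finitely generated convex cone
\[
C_i := \mathrm{cone}\bigl\{a_l^{(j)} : l \leq i,\ j \in J_l,\ \langle a_l^{(j)},\bar x\rangle = b_l^{(j)}\bigr\} \subseteq \sum_{j=1}^{r} N_{K_j}(\bar x) \subseteq N_K(\bar x).
\]
In particular $x_i = P_{F_i}(x_0) = \bar x$ is equivalent to $d_0 := x_0 - \bar x \in C_i$.

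Now suppose for contradiction that $d_i := x_i - \bar x \neq 0$ for every $i$, so $d_i \to 0$. For $i$ large enough that $\|d_i\|$ is below the minimum positive gap of the finitely many pre-$i_0$ constraints strictly inactive at $\bar x$, those constraints remain inactive at $x_i$, so the only constraints of $F_i$ that can be active at $x_i$ are among the active-at-$\bar x$ ones used to build $C_i$. The KKT relation $x_0 - x_i \in N_{F_i}(x_i)$ then yields $d_0 - d_i \in C_i$ and $d_i \in C_i^{-}$, and complementary slackness yields $\langle d_i, d_0 - d_i\rangle = 0$. Extract a subsequence with $u_k := d_{i_k}/\|d_{i_k}\| \to u^* \in S^{n-1}$; from $\langle u_k, d_0\rangle = \langle u_k, d_{i_k}\rangle = \|d_{i_k}\|$ we obtain $\langle u^*, d_0\rangle = 0$. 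Next, for each $j$ the normal $a_{i_k+1}^{(j)} = P_{N_{K_j}(\bar x)}(d_{i_k}) = \|d_{i_k}\|\,P_{N_{K_j}(\bar x)}(u_k)$ becomes a constraint of every $F_m$ with $m \geq i_k+1$, so $\langle P_{N_{K_j}(\bar x)}(u_k), u_{k'}\rangle \leq 0$ whenever $k' > k$. Letting $k' \to \infty$ and then $k \to \infty$, and using continuity of projection onto the closed convex cone $N_{K_j}(\bar x)$, gives $\langle P_{N_{K_j}(\bar x)}(u^*), u^*\rangle \leq 0$. Moreau's theorem makes this inner product equal to $\|P_{N_{K_j}(\bar x)}(u^*)\|^2 \geq 0$, so $P_{N_{K_j}(\bar x)}(u^*) = 0$, i.e., $u^* \in T_{K_j}(\bar x)$ for every $j$, and hence $u^* \in T_K(\bar x)$ by \eqref{eq:cone-condn}.

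To close, \eqref{eq:normal-condn} supplies $\eta > 0$ with $\mathbb{B}_\eta(d_0) \subseteq N_K(\bar x)$, so $v := d_0 + (\eta/2)u^* \in N_K(\bar x)$; polarity of $T_K(\bar x)$ and $N_K(\bar x)$ forces $\langle u^*, v\rangle \leq 0$, but $\langle u^*, v\rangle = \langle u^*, d_0\rangle + (\eta/2)\|u^*\|^2 = \eta/2 > 0$, a contradiction. The main obstacle I anticipate is the bookkeeping in the second paragraph: pinning down precisely when $x_i$ is close enough to $\bar x$ that the pre-$i_0$ constraints are strictly inactive there, so that the KKT multipliers are supported on generators of $C_i$ and the Moreau orthogonality $\langle d_i, d_0-d_i\rangle = 0$ is legitimate. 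Once that local reduction is made carefully, the cluster-point extraction and the $T_{K_j}(\bar x)$-tangency argument flow mechanically from the homogeneity of projections onto cones and the continuity of $P_{N_{K_j}(\bar x)}$.
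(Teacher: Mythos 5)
Your proof is correct and follows essentially the same route as the paper's: argue by contradiction, use the conic structure \eqref{eq:cone-condn} together with Proposition \ref{rem:Use-Moreau} to force every supporting hyperplane generated near $\bar{x}$ to pass through $\bar{x}$, deduce from the retained constraints that any normalized cluster direction $u^{*}$ of $x_{i}-\bar{x}$ must lie in $T_{K}(\bar{x})$, and then contradict \eqref{eq:normal-condn}. The only real variation is in the final step, where you extract $\left\langle u^{*},x_{0}-\bar{x}\right\rangle =0$ from the KKT/complementary-slackness conditions of the projection onto $F_{i}$, whereas the paper reaches the same contradiction directly from the bound $\|x_{i}-x_{0}\|<d(x_{0},K)$; both hinge on the strict inequality $\left\langle x_{0}-\bar{x},u^{*}\right\rangle <0$ supplied by the interior normal condition.
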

\begin{proof}
We can assume $\bar{x}=0$. Suppose on the contrary that the convergence
to $0$ requires infinitely many iterations. We seek a contradiction.
Let $\{x_{i}\}$ be the sequence generated by Algorithm \ref{alg:closest-pt-proj},
and let $\{\tilde{x}_{i}\}$ be a subsequence such that $\|\tilde{x}_{i}-0\|<\bar{\epsilon}$
for all $i$, and $\lim_{i\to\infty}\frac{\tilde{x}_{i}}{\|\tilde{x}_{i}\|}$
exists, say $\tilde{x}$. 

\textbf{\uline{Step 1: $\tilde{x}$ lies in $T_{K}(0)$.}} Suppose
on the contrary that $\tilde{x}\notin T_{K}(0)$. Then $\tilde{x}\notin K_{j}$
for some $j$. Assume without loss of generality that $j=1$. Let
$P_{T_{K_{1}}(0)}(\tilde{x})=z$, and $\tilde{x}-z\in N_{K_{1}}(0)$.
Let $v_{i}=\frac{\tilde{x}_{i}}{\|\tilde{x}_{i}\|}-P_{T_{K_{1}}(0)}\left(\frac{\tilde{x}_{i}}{\|\tilde{x}_{i}\|}\right)$
and $v=\tilde{x}-z$. By the continuity of the projection, we must
have $v_{i}\to v$. Since the hyperplane $\{x\mid\left\langle x,v\right\rangle =\left\langle z,v\right\rangle \}$
separates $\tilde{x}$ from $K_{1}$ and $\left\langle z,v\right\rangle =0$
by Moreau's Theorem, we have $\left\langle \tilde{x},v\right\rangle >0$.

Let $y$ be any point in $\bar{\epsilon}\mathbb{B}$, taking into
account \eqref{eq:cone-condn} and $\bar{x}=0$. By Moreau's Theorem
(See Proposition \ref{rem:Use-Moreau}), the supporting hyperplane
produced by projecting $y$ onto $K_{1}$ contains $0$ on its boundary.
By the design of Algorithm \ref{alg:closest-pt-proj}, we must have
$\left\langle \tilde{x}_{i+1},v_{i}\right\rangle \leq0$, which gives
\[
\left\langle \frac{\tilde{x}_{i+1}}{\|\tilde{x}_{i+1}\|},v_{i}\right\rangle \leq0.
\]
Taking limits, we get $\left\langle \tilde{x},v\right\rangle \leq0$,
which contradicts $\left\langle \tilde{x},v\right\rangle >0$ earlier.

\textbf{\uline{Step 2: $\tilde{x}$ cannot lie in $T_{K}(0)$.}}\textbf{
}Suppose otherwise. Then the condition \eqref{eq:normal-condn} implies
that if $\tilde{x}\in K$ and $i$ is large enough, then $d(x_{0},0)<d(x_{0},\tilde{x}_{i})$,
contradicting that $d(x_{0},0)>d(x_{0},\tilde{x}_{i})$ in the choice
of $\tilde{x}_{i}$.

The statements proved in Steps 1 and 2 are clearly contradictory,
which ends our proof.
\end{proof}
In view of the above result, we would expect Algorithm \ref{alg:closest-pt-proj}
(especially with \eqref{eq:mass-proj}) to converge quickly to the
closest point under condition \eqref{eq:normal-condn}.

The number of iterations needed before convergence depends on, among
other things, the $\bar{\epsilon}$. In the case where $K_{j}$ are
cones and \eqref{eq:normal-condn} does not hold, step 2 in the proof
of Theorem \ref{thm:Finite-conv-cones} may fail, and there may be
no finite convergence. We give an example.
\begin{example}
\label{exa:No-finite-conv}(No finite convergence) Consider $X=\mathbb{R}^{3}$.
Consider the rays 
\[
r_{1}=\mathbb{R}_{+}(1,-1,-1)\mbox{ and }r_{2}=\mathbb{R}_{+}(-1,-1,-1).
\]
For a vector $v$, let $\theta_{1}$ be the angle $r_{1}$ makes with
$v$, and let $\theta_{2}$ be similarly defined. Let $K_{1}$ and
$K_{2}$ be the ice cream cones defined by 
\[
K_{i}=\{v\mid\cos(\theta_{i})\leq1/\sqrt{3}\}\mbox{ for }i=1,2.
\]
Let $x_{0}=(0,0,1)$. A few consequences are immediate.
\begin{enumerate}
\item The ray $\mathbb{R}_{+}(0,-1,0)$ is on the boundaries of $K_{1}$,
$K_{2}$ and $K:=K_{1}\cap K_{2}$.
\item $P_{K}(x_{0})=0$.
\item There is only one unit vector in $N_{K_{1}}\big((0,-1,0)\big)$, say
$u$. Let the subspace $S$ be $\{x\mid(1,0,0)^{T}x=0\}$. Then 
\[
[\mathbb{R}_{+}\{u\}+\{(0,-1,0)\}]\cap S=(0,-1,0).
\]
A similar statement holds when $K_{1}$ is replaced by $K_{2}$. 
\end{enumerate}
We now show that the convergence of Algorithm \ref{alg:closest-pt-proj}
with \eqref{eq:mass-proj} is infinite. By symmetry, the iterates
$x_{i}$ lie in $S$. If Algorithm \ref{alg:closest-pt-proj} with
\eqref{eq:mass-proj} converges in finitely many iterations, then
property (3) would imply that the next to last iterate is of the form
$(0,-\alpha,0)$, where $\alpha>0$, and that cannot happen. In the
case where $x_{0}=(0,\epsilon,1)$, where $\epsilon>0$ is arbitrarily
small, we will still get finite convergence to $0$, but the number
of iterations needed will be arbitrarily large as $\epsilon\searrow0$.
\end{example}

\section{\label{sec:mod-MAP}Convergence for the Set Intersection Problem}

In this section, we analyze a modified alternating projection algorithm
(Algorithm \ref{alg:local-modified-MAP}). The global convergence
of this algorithm is proved in Theorem \ref{thm:basic-conv}. The
insight on supporting hyperplanes allows us to obtain local superlinear
convergence in $\mathbb{R}^{2}$, although Algorithm \ref{alg:local-modified-MAP}
in its current form does not converge superlinearly in $\mathbb{R}^{3}$
(Example \ref{exa:No-superlin}). A locally superlinearly convergence
algorithm will be presented and analyzes in Section \ref{sec:Error-estimates}
using very different methods.

We shall analyze the following algorithm. 
\begin{algorithm}
\label{alg:local-modified-MAP}(Modified MAP) For a point $x_{0}$
and closed convex sets $K_{1}$ and $K_{2}$ of a Hilbert space $X$,
find a point in $K:=K_{1}\cap K_{2}$. 

\textbf{Step 0}: Set $i=1$.

\textbf{Step 1:} Choose $J_{i}\subset\{1,2\}$. Some examples are
\begin{subequations} 
\begin{eqnarray}
J_{i} & = & \{[i]\},\label{eq:MAP-1-pt}\\
\mbox{and }J_{i} & = & \{1,2\}.\label{eq:MAP-2-pt}
\end{eqnarray}
\end{subequations}

\textbf{Step 2:} For $j\in J_{i}$, define $x_{i}^{(j)}\in X$, $a_{i}^{(j)}\in X$
and $b_{i}^{(j)}\in X$ by 
\begin{eqnarray*}
x_{i}^{(j)} & = & P_{K_{j}}(x_{i-1}),\\
a_{i}^{(j)} & = & x_{i-1}-x_{i}^{(j)},\\
\mbox{and }b_{i}^{(j)} & = & \left\langle a_{i}^{(j)},x_{i}^{(j)}\right\rangle .
\end{eqnarray*}
Define the set $F_{i}\subset X$ by
\[
F_{i}:=\begin{cases}
\left\{ x\mid\left\langle a_{l}^{([l])},x\right\rangle \leq b_{l}^{([l])}\mbox{ for }l=i-1,i\right\}  & \mbox{ if }J_{i}=\{[i]\}\mbox{ and }i>1,\\
\left\{ x\mid\left\langle a_{1}^{(1)},x\right\rangle \leq b_{1}^{(1)}\right\}  & \mbox{ if }J_{i}=\{[i]\}\mbox{ and }i=1,\\
\left\{ x\mid\left\langle a_{i}^{(j)},x\right\rangle \leq b_{i}^{(j)}\mbox{ for }j=1,2\right\}  & \mbox{ if }J_{i}=\{1,2\}.
\end{cases}
\]

Let $x_{i}=P_{F_{i}}(x_{i}^{([i])})$. 

\textbf{Step 3: }Set $i\leftarrow i+1$, and go back to step 1. 
\end{algorithm}
As mentioned in Remark \ref{rem:proj-to-2nd-order}, there are good
reasons for choosing $J_{i}$ to be such that $|J_{i}|=1$ but not
cyclic, but the construction of $F_{i}$ has to be amended accordingly.
It may turn out that $x_{i}$ could be in $K_{1}$ already, so $P_{K_{1}}(x_{i})$
will not give a new supporting hyperplane. In this case, we can just
use the supporting hyperplane obtained from previous iterations. When
$J_{i}=\{1,2\}$, we can check that $x_{i}$ lies in the plane containing
$x_{i-1}$, $x_{i}^{(1)}$ and $x_{i}^{(2)}$, and that $x_{i}=P_{F_{i}}(x_{i}^{([i])})=P_{F_{i}}(x_{i-1})$.
We shall prove the superlinear convergence of this case in $\mathbb{R}^{2}$
in Theorem  \ref{thm:Superlin}.

We now recall some results on Fej\'{e}r monotonicity to prove convergence
of Algorithm \ref{alg:local-modified-MAP}. We take our results from
\cite[Theorem 4.5.10 and Lemma 4.5.8]{BZ05}.
\begin{defn}
(Fej\'{e}r monotone sequence) Let $X$ be a Hilbert space, let $C\subset X$
be a closed convex set and let $\{x_{i}\}$ be a sequence in $X$.
We say that $\{x_{i}\}$ is\emph{ Fej\'{e}r monotone with respect
to $C$} if 
\[
\|x_{i+1}-c\|\leq\|x_{i}-c\|\mbox{ for all }c\in C\mbox{ and }i=1,2,\dots
\]
\end{defn}
\begin{thm}
\label{thm:Fejer-ppty}(Properties of Fej\'{e}r monotonicity) Let
$X$ be a Hilbert space, let $C\subset X$ be a closed convex set
and let $\{x_{i}\}$ be a Fej\'{e}r monotone sequence with respect
to $C$. Then 
\begin{enumerate}
\item $\{x_{i}\}$ is bounded and $d(C,x_{i+1})\leq d(C,x_{i})$.
\item $\{x_{i}\}$ has at most one weak cluster point in $C$.
\item If $\intr(C)\neq\emptyset$, then $\{x_{i}\}$ converges in norm.
\end{enumerate}
\end{thm}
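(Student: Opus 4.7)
The plan is to dispatch the three parts in sequence, the first two being essentially immediate consequences of the defining inequality and the third carrying the real content. Part 1 is instant: fix any $c_{0}\in C$, so $\|x_{i}-c_{0}\|\le\|x_{1}-c_{0}\|$ bounds $\{x_{i}\}$, and taking the infimum over $c\in C$ on both sides of $\|x_{i+1}-c\|\le\|x_{i}-c\|$ gives $d(C,x_{i+1})\le d(C,x_{i})$.

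For Part 2, the key observation is that for each $c\in C$ the real sequence $\|x_{i}-c\|^{2}=\|x_{i}\|^{2}-2\langle x_{i},c\rangle+\|c\|^{2}$ is nonincreasing and bounded below, hence convergent. Differencing this identity at two points $c_{1},c_{2}\in C$ cancels the awkward $\|x_{i}\|^{2}$ term and shows that $\langle x_{i},c_{1}-c_{2}\rangle$ has a limit in $\mathbb{R}$. If $z_{1},z_{2}\in C$ are two weak cluster points, applying this with $(c_{1},c_{2})=(z_{1},z_{2})$ and passing to the weak limit along each of the two subsequences forces $\langle z_{1},z_{1}-z_{2}\rangle=\langle z_{2},z_{1}-z_{2}\rangle$, which rearranges to $\|z_{1}-z_{2}\|^{2}=0$.

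Part 3 is the main obstacle. With $c_{0}\in\intr(C)$ and $r>0$ chosen so that $\mathbb{B}_{r}(c_{0})\subset C$, the plan is, for any $i<j$ with $x_{i}\ne x_{j}$, to feed the carefully chosen point $c:=c_{0}-r\,\frac{x_{j}-x_{i}}{\|x_{j}-x_{i}\|}\in C$ into the Fej\'{e}r inequality $\|x_{j}-c\|^{2}\le\|x_{i}-c\|^{2}$. Expanding both sides around $c_{0}$, the $r^{2}$ terms cancel and the remaining unit-vector cross-term is anti-parallel to $x_{j}-x_{i}$, collapsing into the linear quantity $-2r\|x_{j}-x_{i}\|$ and producing an estimate
\[
2r\,\|x_{j}-x_{i}\|\;\le\;\|x_{i}-c_{0}\|^{2}-\|x_{j}-c_{0}\|^{2}.
\]
Since $\{\|x_{i}-c_{0}\|^{2}\}$ is a nonincreasing, nonnegative real sequence it is Cauchy, so the right-hand side tends to $0$ as $i,j\to\infty$; thus $\{x_{i}\}$ is Cauchy in $X$ and converges in norm. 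The delicate point is precisely the engineering of $c$: moving from $c_{0}$ by a fixed distance $r$ in the direction opposite to $x_{j}-x_{i}$ exploits the full ball of admissible probes provided by $\intr(C)$ to turn a quadratic Fej\'{e}r bound into a linear one, after which everything reduces to completeness of $\mathbb{R}$.
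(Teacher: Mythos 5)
Your proof is correct. Note that the paper does not actually prove this theorem: it is quoted verbatim from \cite[Theorem 4.5.10 and Lemma 4.5.8]{BZ05}, so there is no in-paper argument to compare against. Your three steps are the standard ones: part 1 is immediate from the defining inequality; part 2 is the usual polarization trick, using that $\|x_{i}-c\|^{2}$ converges for each fixed $c\in C$ so that $\langle x_{i},c_{1}-c_{2}\rangle$ has a limit, which pins down any two weak cluster points $z_{1},z_{2}\in C$ via $\langle z_{1}-z_{2},z_{1}-z_{2}\rangle=0$; and part 3 is the classical interior-point probe $c=c_{0}-r\,(x_{j}-x_{i})/\|x_{j}-x_{i}\|$, which indeed yields $2r\|x_{j}-x_{i}\|\leq\|x_{i}-c_{0}\|^{2}-\|x_{j}-c_{0}\|^{2}$ and hence the Cauchy property, since the monotone sequence $\{\|x_{i}-c_{0}\|^{2}\}$ converges. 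All signs and the direction of the Fej\'{e}r inequality (valid for $i<j$ by telescoping) check out; this is a complete and self-contained proof of the cited result.
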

\begin{lem}
\label{lem:attractive-ppty-of-projection}(Attractive property of
projection) Let $X$ be a Hilbert space and let $C\subset X$ be a
closed convex set. Then $P_{C}:X\to X$ is \emph{1-attracting with
respect to $C$}: For every $x\notin C$ and $y\in C$, we have 
\[
\|P_{C}(x)-x\|^{2}\leq\|x-y\|^{2}-\|P_{C}(x)-y\|^{2}.
\]

\end{lem}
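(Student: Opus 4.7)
The plan is to prove the stated inequality directly from the variational characterization of the metric projection onto a closed convex set in Hilbert space, namely that $p = P_C(x)$ is the unique element of $C$ satisfying
\[
\langle x - p,\, y - p \rangle \leq 0 \quad \text{for all } y \in C.
\]
This is the standard obtuse-angle criterion, whose existence/uniqueness and the characterization itself rely only on strict convexity of $\|\cdot\|^2$ and the parallelogram identity; I would invoke it as a known fact rather than reprove it here.

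\textbf{Main step.} Writing $p := P_C(x)$ and fixing $y \in C$, I would decompose
\[
\|x - y\|^2 \;=\; \|(x - p) + (p - y)\|^2 \;=\; \|x - p\|^2 + 2\langle x - p,\, p - y\rangle + \|p - y\|^2,
\]
which is just the Hilbert-space expansion. Rearranging gives
\[
\|x - y\|^2 - \|p - y\|^2 \;=\; \|x - p\|^2 + 2\langle x - p,\, p - y\rangle.
\]
The variational inequality applied to this particular $y \in C$ yields $\langle x - p, y - p\rangle \leq 0$, i.e. $\langle x - p, p - y\rangle \geq 0$. Dropping the nonnegative cross term gives the desired
\[
\|P_C(x) - x\|^2 \;\leq\; \|x - y\|^2 - \|P_C(x) - y\|^2.
\]

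\textbf{Remarks on difficulty.} There is essentially no obstacle: the entire content sits in the characterization of the projection, and the algebra is a single Hilbert-space expansion. The only thing worth flagging is that the hypothesis $x \notin C$ is not actually used in the argument (the inequality holds trivially when $x \in C$, since then $P_C(x) = x$ and both sides reduce to $0 \leq \|x-y\|^2 - \|x-y\|^2 = 0$); I would note this in passing but not belabor it. In fact the inequality is often stated in the stronger form that $P_C$ is firmly nonexpansive, of which this is a direct consequence, but since the statement as given follows immediately from the obtuse-angle characterization, I would keep the proof to the three lines above.
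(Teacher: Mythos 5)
Your proof is correct: the variational characterization $\langle x-p,\,y-p\rangle\leq0$ together with the expansion of $\|x-y\|^2=\|(x-p)+(p-y)\|^2$ gives exactly the claimed inequality, and your observation that the hypothesis $x\notin C$ is superfluous is also right. The paper does not prove this lemma at all (it is quoted from the cited reference), and your argument is the standard one, so there is nothing to compare beyond noting that your write-up supplies the proof the paper omits.
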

We now prove the convergence of Algorithm \ref{alg:local-modified-MAP}.
\begin{thm}
\label{thm:basic-conv}(Convergence of Algorithm \ref{alg:local-modified-MAP})
Suppose $K_{1}$ and $K_{2}$ are closed convex sets in a Hilbert
space $X$ such that $K:=K_{1}\cap K_{2}\neq\emptyset$. Then the
iterates in Algorithm \ref{alg:local-modified-MAP} with either \eqref{eq:MAP-1-pt}
or \eqref{eq:MAP-2-pt} are such that $x_{i}$ converges weakly to
some $z$. The convergence is strong if either $\intr(K)\neq\emptyset$
or $X=\mathbb{R}^{n}$.\end{thm}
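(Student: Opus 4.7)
The plan is to establish Fej\'er monotonicity of $\{x_i\}$ with respect to $K$ and then combine Theorem~\ref{thm:Fejer-ppty} with Lemma~\ref{lem:attractive-ppty-of-projection} to identify the limit. The crucial structural fact is that every halfspace $\{x\mid\langle a_l^{(j)},x\rangle\leq b_l^{(j)}\}$ is a supporting halfspace of $K_j$, hence contains $K_j$ and in particular contains $K$; thus $K\subset F_i$ for every $i$. Combined with $K\subset K_{[i]}$, each iteration is a composition of two projections onto supersets of $K$, so $\|x_i-c\|\leq\|x_i^{([i])}-c\|\leq\|x_{i-1}-c\|$ for every $c\in K$. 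Consequently $\{x_i\}$ is Fej\'er monotone with respect to $K$ and, by Theorem~\ref{thm:Fejer-ppty}(1), bounded, so it has at least one weak cluster point.

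Next I would show $\|x_{i-1}-x_i\|\to 0$. Applying Lemma~\ref{lem:attractive-ppty-of-projection} to $P_{K_{[i]}}$ at $x_{i-1}$ with any fixed $c\in K$ gives
\[
\|x_{i-1}-x_i^{([i])}\|^2\leq\|x_{i-1}-c\|^2-\|x_i^{([i])}-c\|^2\leq\|x_{i-1}-c\|^2-\|x_i-c\|^2,
\]
where the second inequality uses the nonexpansive step $\|x_i-c\|\leq\|x_i^{([i])}-c\|$ obtained in the previous paragraph. Summing telescopes to $\|x_0-c\|^2-\lim_i\|x_i-c\|^2<\infty$, so $\|x_{i-1}-x_i^{([i])}\|\to0$; a parallel application of the lemma to $P_{F_i}$ gives $\|x_i^{([i])}-x_i\|\to0$, and hence $\|x_{i-1}-x_i\|\to0$. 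Under the variant \eqref{eq:MAP-2-pt} the same calculation produces $\|x_{i-1}-x_i^{(j)}\|\to0$ for $j=1,2$ simultaneously.

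To locate the cluster point, suppose $x_{i_k}\rightharpoonup z$. Under the cyclic rule \eqref{eq:MAP-1-pt}, by passing to a subsequence I may assume $[i_k]=1$; then $x_{i_k}^{(1)}\in K_1$, and the estimates above force $x_{i_k}^{(1)}\rightharpoonup z$. Weak closedness of the convex set $K_1$ then yields $z\in K_1$, and the same argument applied at step $i_k+1$ (where $[i_k+1]=2$) gives $z\in K_2$. Under \eqref{eq:MAP-2-pt} both memberships come directly from iteration $i_k$. Hence $z\in K$, and Theorem~\ref{thm:Fejer-ppty}(2) forces uniqueness of the weak cluster point, producing weak convergence of the whole sequence. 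For the strong-convergence claim, the case $\intr(K)\neq\emptyset$ follows immediately from Theorem~\ref{thm:Fejer-ppty}(3), while in $\mathbb{R}^n$ weak and norm convergence coincide on bounded sequences.

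The main obstacle I anticipate is the cyclic rule \eqref{eq:MAP-1-pt}: each iteration only ``touches'' one of $K_1,K_2$, so membership of the weak limit in the other set must be recovered by propagating weakly one step forward via $\|x_{i-1}-x_i\|\to 0$. Everything else is careful bookkeeping of the Fej\'er-monotonicity machinery from Section~2.
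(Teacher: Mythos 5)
Your proposal is correct and follows essentially the same route as the paper: Fej\'er monotonicity via $K\subset K_j$ and $K\subset F_i$, asymptotic regularity from the $1$-attracting property of projections with a telescoping sum, propagation of the weak cluster point into $K_1$ and $K_2$ by weak closedness, and then Theorem \ref{thm:Fejer-ppty}(2)--(3) to conclude. The only cosmetic difference is that the paper packages the iterates and the intermediate projections into a single interleaved sequence $\{\tilde{x}_i\}$ before running this argument, whereas you track $\{x_i\}$ and $\{x_i^{(j)}\}$ separately.
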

\begin{proof}
We shall first prove convergence when $J_{i}$ is chosen by \eqref{eq:MAP-1-pt}.
We note that Algorithm \ref{alg:local-modified-MAP} can be easily
extended to the case of $r>2$ closed convex sets, and the corresponding
extension of this result will still be true. 

The sequences $\{x_{2i+1}^{(1)}\}_{i}$ and $\{x_{2i}^{(2)}\}_{i}$
lie in $K_{1}$ and $K_{2}$ respectively. Construct the sequence
$\{\tilde{x}_{i}\}$ such that 
\[
\tilde{x}_{i}=\begin{cases}
x_{j} & \mbox{ if }i=2j\\
x_{2j+1}^{(1)} & \mbox{ if }i=4j+1\\
x_{2j+2}^{(2)} & \mbox{ if }i=4j+3.
\end{cases}
\]
Note that $\{\tilde{x}_{i}\}$ lines up the points in $\{x_{i}\}$
and $\{x_{i}^{([i])}\}$ in the order in which they were produced
in Algorithm \ref{alg:local-modified-MAP}.

\textbf{\uline{Step 1: $\{\tilde{x}_{i}\}$ is Fej\'{e}r monotone
with respect to $K$.}} Since $K\subset K_{1}$, $K\subset K_{2}$
and $K\subset F_{i}$ for all $i$, the projections $P_{K_{1}}$,
$P_{K_{2}}$ and $P_{F_{i}}$ are nonexpansive. So 
\begin{eqnarray*}
 &  & \|x_{i}^{([i])}-y\|=\|P_{K_{[i]}}(x_{i-1})-y\|\leq\|x_{i-1}-y\|\\
 & \mbox{and } & \|x_{i}-y\|=\|P_{F_{i}}(x_{i}^{(i)})-y\|\leq\|x_{i}^{([i])}-y\|\mbox{ for all }y\in K\mbox{ and }i\geq1.
\end{eqnarray*}
This means that $\{\tilde{x}_{i}\}$ is a Fej\'{e}r monotone sequence
with respect to $K$. 

\textbf{\uline{Step 2: $\{\tilde{x}_{i}\}$ is asymptotically regular,
i.e.,}} 
\[
\lim_{i\to\infty}\|\tilde{x}_{i}-\tilde{x}_{i+1}\|=0.
\]
Fix any $\bar{y}\in K$. Applying Lemma \ref{lem:attractive-ppty-of-projection},
we get 
\begin{eqnarray*}
 &  & \|x_{i}^{([i])}-x_{i-1}\|^{2}=\|P_{K_{[i]}}(x_{i-1})-x_{i-1}\|^{2}\leq\|x_{i-1}-\bar{y}\|^{2}-\|x_{i}^{([i])}-\bar{y}\|^{2}\\
 & \mbox{ and } & \|x_{i}-x_{i}^{([i])}\|^{2}\leq\|x_{i}^{([i])}-\bar{y}\|^{2}-\|x_{i}-\bar{y}\|^{2}\mbox{ for all }i\geq1.
\end{eqnarray*}
This tells us that $\|\tilde{x}_{i}-\tilde{x}_{i-1}\|^{2}\leq\|\tilde{x}_{i-1}-\bar{y}\|^{2}-\|\tilde{x}_{i}-\bar{y}\|^{2}$
for all $i\geq1$. Since $\{\tilde{x}_{i}\}$ is Fej\'{e}r monotone
with respect to $K$, $\|\tilde{x}_{i}-\bar{y}\|^{2}$ is a decreasing
sequence. We thus have the asymptotic regularity\emph{ }of $\{\tilde{x}_{i}\}$. 

\textbf{\uline{Step 3: Wrapping up. }}By Theorem \ref{thm:Fejer-ppty}(1),
the sequence $\{\tilde{x}_{i}\}$ is bounded. So $\{\tilde{x}_{i}\}$
has a convergent subsequence, say $\{\tilde{x}_{i_{k}}\}_{k}$. By
the asymptotic regularity of $\{\tilde{x}_{i}\}$, the sequence $\{\tilde{x}_{i_{k}+1}\}_{k}$
has the same limit as $\{\tilde{x}_{i_{k}}\}_{k}$, so we can take
a different subsequence if necessary and assume that infinitely many
of the $i_{k}$ are odd. We can choose yet another subsequence of
$\{\tilde{x}_{i_{k}}\}$ if necessary so that all terms are in either
$K_{1}$, or all terms are in $K_{2}$. For the sake of argument,
assume that all terms lie in $K_{1}$. So the weak limit of $\{\tilde{x}_{i_{k}}\}_{k}$,
say $x$, lies in $K_{1}$. By the asymptotic regularity of $\{\tilde{x}_{i_{k}}\}_{k}$
and considering $\{\tilde{x}_{i_{k}+2}\}_{k}$, we see that $x\in K_{2}$.
So the weak cluster point must lie in $K$. By Theorem \ref{thm:Fejer-ppty}(2),
we conclude that $\{\tilde{x}_{i}\}$ converges to a point in $K$.
The last sentence of the result follows from Theorem \ref{thm:Fejer-ppty}(3).
\textbf{ }

For the case of using \eqref{eq:MAP-2-pt}, the steps are very similar,
so we only give an outline: One proves that the sequences $\{x_{i}\}$
and $\{x_{i}^{(j)}\}$ are Fej\'{e}r monotone with respect to $K$
for $j=1,2$. Next, the sequence $x_{0},x_{1}^{(j)},x_{1},x_{1}^{(j)},x_{2},\dots$
is asymptotically regular, which implies that the sequences $\{x_{i}\}$
and $\{x_{i}^{(j)}\}$ have the same weak cluster points. Since $j$
is arbitrary, the weak cluster points must lie in $K$, and by Theorem
\ref{thm:Fejer-ppty}(2), such a weak cluster point is unique.
\end{proof}
The problem of whether the MAP can converge strongly in a Hilbert
space has only been recently resolved to be negative in \cite{Hundal04},
so it remains to be seen how Theorem \ref{thm:basic-conv} can be
strengthened.

We now move on to the fast local convergence of Algorithm \ref{alg:local-modified-MAP}.
Even though the result below is only valid for $\mathbb{R}^{2}$ and
a result establishing superlinear convergence for $\mathbb{R}^{n}$
is presented in Section \ref{sec:Error-estimates}, Theorem \ref{thm:Superlin}
has value because the proof is simpler than and very different from
the proof in Section \ref{sec:Error-estimates}, and the assumptions
needed are quite different.
\begin{thm}
\label{thm:Superlin}(Superlinear convergence in $\mathbb{R}^{2}$)
Suppose $K_{1}$ and $K_{2}$ are closed convex sets in $\mathbb{R}^{2}$
such that 
\begin{enumerate}
\item Algorithm \ref{alg:local-modified-MAP} with \eqref{eq:MAP-2-pt}
converges to a point $\bar{x}$ such that $\partial N_{K_{1}}(\bar{x})\cap\partial[-N_{K_{2}}(\bar{x})]=\{0\}$,
and 
\item There is some iterate $x_{i}$ such that $x_{i}\notin\intr(K_{j})$
for $j=1,2$. 
\end{enumerate}
Then the sequence $\{x_{i}\}$ thus produced converges locally superlinearly
to $\bar{x}$.\end{thm}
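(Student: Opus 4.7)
The plan is to exploit the fact that in $\mathbb{R}^{2}$, the set $F_{i}$ is a wedge whose corner $v_{i}$ approximates $\bar{x}$ to strictly better than linear order, and then to show that $x_{i}=P_{F_{i}}(x_{i-1})$ inherits this improved approximation. By Theorem \ref{thm:basic-conv} we may restrict to the tail of $\{x_{i}\}$ lying in an arbitrarily small neighborhood of $\bar{x}$. The role of condition (2) is to rule out the degenerate dynamics where every iterate sits inside one of the two sets: in that scenario $a_{i}^{(j)}=0$ for some $j$, $F_{i}$ collapses to a single halfspace, and the algorithm reduces to pure MAP-type behavior. A short case analysis tracking the possibilities $x_{i}\in K_{1}\setminus K_{2}$, $x_{i}\in K_{2}\setminus K_{1}$, and $x_{i}\notin K_{1}\cup K_{2}$ should show that once condition (2) is achieved, for all but finitely many indices one has $x_{i-1}\notin K_{1}\cup K_{2}$, so both vectors $a_{i}^{(1)},a_{i}^{(2)}$ are nonzero and $F_{i}$ is a genuine wedge.

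Next, Theorem \ref{thm:radiality} applied at $\bar{x}$ to each $K_{j}$ gives that for any $\epsilon>0$, $d(\bar{x},A_{i}^{(j)})\leq\epsilon\|x_{i}^{(j)}-\bar{x}\|\leq\epsilon\|x_{i-1}-\bar{x}\|$ for all large $i$, where $A_{i}^{(j)}$ denotes the supporting hyperplane $\{x:\langle a_{i}^{(j)},x\rangle=b_{i}^{(j)}\}$ and the second inequality uses nonexpansiveness of $P_{K_{j}}$ together with $\bar{x}\in K_{j}$. Hence both supporting hyperplanes defining $F_{i}$ pass within $o(\|x_{i-1}-\bar{x}\|)$ of $\bar{x}$. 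The transversality condition (1) forces the limiting directions of $a_{i}^{(1)}/\|a_{i}^{(1)}\|$ and $a_{i}^{(2)}/\|a_{i}^{(2)}\|$ to be non-antipodal elements of $N_{K_{1}}(\bar{x})$ and $N_{K_{2}}(\bar{x})$, so the angle between the two hyperplanes stays bounded away from $0$ and $\pi$. A straightforward planar calculation then gives that the wedge corner $v_{i}=A_{i}^{(1)}\cap A_{i}^{(2)}$ satisfies $\|v_{i}-\bar{x}\|=o(\|x_{i-1}-\bar{x}\|)$.

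The most delicate step is to establish $\|x_{i}-v_{i}\|=o(\|x_{i-1}-\bar{x}\|)$, after which the triangle inequality yields the superlinear estimate $\|x_{i}-\bar{x}\|=o(\|x_{i-1}-\bar{x}\|)$. Since $x_{i-1}$ lies outside both halfspaces, it violates both constraints defining $F_{i}$; the natural route is to show that $x_{i-1}-v_{i}$ lies in the normal cone $N_{F_{i}}(v_{i})=\mathbb{R}_{+}a_{i}^{(1)}+\mathbb{R}_{+}a_{i}^{(2)}$, in which case $x_{i}=v_{i}$ exactly. This implication is automatic when the angle between $a_{i}^{(1)}$ and $a_{i}^{(2)}$ is obtuse, and is the main obstacle in the opposite case, where ``violating both constraints'' is strictly weaker than ``lying in the normal cone at $v_{i}$'' and the projection onto $F_{i}$ could instead fall on one of the two edges. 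In that event $x_{i}$ coincides with one of the points $x_{i}^{(j)}$, and one then argues, using Theorem \ref{thm:radiality} once more together with the bounded-below angle between the edges of $F_{i}$ and the nonexpansive bound $\|x_{i}^{(j)}-\bar{x}\|\leq\|x_{i-1}-\bar{x}\|$, that this $x_{i}^{(j)}$ cannot be far from $v_{i}$ on the scale of $\|x_{i-1}-\bar{x}\|$, yielding the $o$-estimate in all cases.
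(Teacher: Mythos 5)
Your strategy---show that the wedge corner $v_{i}=A_{i}^{(1)}\cap A_{i}^{(2)}$ satisfies $\|v_{i}-\bar{x}\|=o(\|x_{i-1}-\bar{x}\|)$ and then that $x_{i}$ inherits this---is a genuinely different decomposition from the paper's, which never introduces the corner and instead bounds $\|x_{i}-\bar{x}\|$ directly via the sine rule on the cyclic quadrilateral formed by $x_{i}$, $\bar{x}$ and the feet of the perpendiculars from $x_{i}$ to the segments $[x_{i}^{(1)},\bar{x}]$ and $[x_{i}^{(2)},\bar{x}]$. One caveat on your second step: hypothesis (1) excludes only antipodal limiting normals $u_{1}=-u_{2}$, not equal ones (if $N_{K_{1}}(\bar{x})=N_{K_{2}}(\bar{x})=\mathbb{R}_{+}u$, both boundaries in the hypothesis are rays and their intersection is $\{0\}$), and with asymptotically parallel supporting lines the corner $v_{i}$ is not controlled; the paper relies on the same hypothesis for its angle bound, so this is a shared weakness rather than one specific to you.

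The genuine gap is in the step you yourself flag as most delicate. When the angle between $a_{i}^{(1)}$ and $a_{i}^{(2)}$ is acute, $P_{F_{i}}(x_{i-1})$ can land in the relative interior of a face, so $x_{i}=x_{i}^{(1)}$ (say), and the tools you cite do not force $\|x_{i}^{(1)}-v_{i}\|=o(\|x_{i-1}-\bar{x}\|)$. Write $x_{i}^{(1)}=v_{i}+te_{1}$ with $e_{1}$ a unit vector along $A_{i}^{(1)}$. Theorem \ref{thm:radiality} gives $0\leq\langle a_{i}^{(2)},x_{i}^{(2)}-\bar{x}\rangle\leq\epsilon\|a_{i}^{(2)}\|\,\|x_{i-1}-\bar{x}\|$, and the face condition gives $\langle a_{i}^{(2)},x_{i}^{(1)}-v_{i}\rangle\leq0$; these control $t\langle a_{i}^{(2)},e_{1}\rangle$ from one side only, and in the other direction the sole bound is $\|x_{i}^{(1)}-\bar{x}\|\leq\|x_{i-1}-\bar{x}\|$ from nonexpansiveness, which yields $|t|=O(\|x_{i-1}-\bar{x}\|)$, not $o$. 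This scenario is not vacuous: when the limiting normals make a small positive angle (two discs crossing in a thin lens, which hypothesis (1) permits), the cone of points violating both constraints is much larger than $\mathbb{R}_{+}a_{i}^{(1)}+\mathbb{R}_{+}a_{i}^{(2)}$, so face landings are the typical outcome. Worse, once $x_{i}=x_{i}^{(1)}\in K_{1}$, the next iteration has $a_{i+1}^{(1)}=0$, $F_{i+1}$ collapses to a halfspace, and the step reduces to a plain MAP step; iterates can then bounce between $\partial K_{1}$ and $\partial K_{2}$ indefinitely, which also defeats your preliminary claim that eventually $x_{i-1}\notin K_{1}\cup K_{2}$. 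To close the gap you need either an argument that face landings occur only finitely often under the stated hypotheses, or an estimate in the face case that uses strictly more than radiality, the angle bound, and nonexpansiveness.
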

\begin{proof}
We refer to Figure \ref{fig:local-ana-2}. Let $\alpha_{i}^{(1)}$
be the angle between $x_{i}-x_{i}^{(1)}$ and $\bar{x}-x_{i}^{(1)}$,
and let $\alpha_{i}^{(2)}$ be similarly defined. As $i\to\infty$,
the points $x_{i}^{(1)}$ and $x_{i}^{(2)}$ converge to $\bar{x}$,
so Theorem \ref{thm:radiality} says that the angles $\alpha_{i}^{(1)}$
and $\alpha_{i}^{(2)}$ converge to zero. 
\begin{figure}
\includegraphics[scale=0.6]{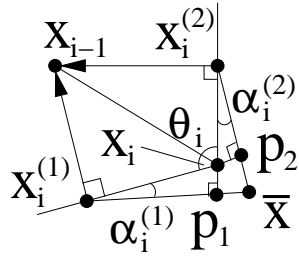}\caption{\label{fig:local-ana-2}Diagram for the proof of Theorem \ref{thm:Superlin}.}
\end{figure}

Let $\theta_{i}$ be the angle between $x_{i}^{(1)}-x_{i}$ and $x_{i}^{(2)}-x_{i}$
as marked. Since $\partial N_{K_{1}}(\bar{x})\cap\partial[-N_{K_{2}}(\bar{x})]=\{0\}$,
the angle $\theta_{i}$ is bounded from below by $\bar{\theta}>0$.
It is also easy to check that if $x_{i}\notin\intr(K_{j})$ for $j=1,2$,
then the same property holds for all $i$ afterward. 

The points $p_{1}$ and $p_{2}$ are obtained by projecting $x_{i}$
onto the line segments $[x_{i}^{(1)},\bar{x}]$ and $[x_{i}^{(2)},\bar{x}]$.
To show that $\{x_{i}\}$ converges superlinearly to $\bar{x}$, it
suffices to show that 
\begin{equation}
\lim_{i\to\infty}\frac{\|x_{i}-\bar{x}\|}{\|x_{i-1}-x_{i}\|}=0,\label{eq:ratio-1}
\end{equation}
since $\|x_{i-1}-\bar{x}\|\geq\|x_{i-1}-x_{i}\|$. Let $L_{i}=\|x_{i-1}-x_{i}\|$. 

By the sine rule, the distance $\|x_{i}^{(1)}-x_{i}\|$ equals $L_{i}\sin\gamma_{i}^{(1)}$,
where $\gamma_{i}^{(1)}$ is some angle in the interval $[0,\pi-\theta_{i}]$.
The distance $\|p_{1}-x_{i}\|$ can be calculated to be bounded above
by $L_{i}\sin\alpha_{i}^{(1)}\sin\gamma_{i}^{(1)}$, while the distance
$\|p_{2}-x_{i}\|$ is easily computed to be bounded from above by
$L_{i}\sin\alpha_{i}^{(2)}\sin\gamma_{i}^{(2)}$, where $\gamma_{i}^{(2)}$
is similarly defined. The distance $\|x_{i}-\bar{x}\|$ is easily
seen to be the diameter of the circumcircle of the cyclic quadrilateral
with vertices $x_{i}$, $\bar{x}$, $p_{1}$ and $p_{2}$. The angle
between $p_{1}-x_{i}$ and $p_{2}-x_{i}$ is easily calculated to
be $\pi-\theta_{i}+\alpha_{i}^{(1)}+\alpha_{i}^{(2)}$. (Note that
$x_{i}^{(2)}$, $x_{i}$ and $p_{1}$ need not be collinear.) The
distance of $\|p_{1}-p_{2}\|$ can be estimated by 
\begin{eqnarray*}
\|p_{1}-p_{2}\| & \leq & \|p_{1}-x_{i}\|+\|p_{2}-x_{i}\|\\
 & \leq & \sin(\min\{\pi/2,\pi-\theta_{i}\})[\sin\alpha_{i}^{(1)}+\sin\alpha_{i}^{(2)}]L_{i}.
\end{eqnarray*}
The value $\|x_{i}-\bar{x}\|$ can be obtained by the sine rule to
be 
\[
\frac{\|p_{1}-p_{2}\|}{\sin(\pi-\theta_{i}+\alpha_{i}^{(1)}+\alpha_{i}^{(2)})},
\]
so we have 
\[
\|x_{i}-\bar{x}\|\leq\frac{\sin(\min\{\pi/2,\pi-\theta_{i}\})}{\sin(\pi-\theta_{i}+\alpha_{i}^{(1)}+\alpha_{i}^{(2)})}L_{i}\left[\sin\alpha_{i}^{(1)}+\sin\alpha_{i}^{(2)}\right].
\]
Thus to prove that \eqref{eq:ratio-1}, it suffices to prove that
\begin{equation}
\lim_{i\to\infty}\frac{\sin(\min\{\pi/2,\pi-\theta_{i}\})}{\sin([\pi-\theta_{i}]+\alpha_{i}^{(1)}+\alpha_{i}^{(2)})}\left[\sin\alpha_{i}^{(1)}+\sin\alpha_{i}^{(2)}\right]=0.\label{eq:limit-zero-est}
\end{equation}
We have shown that $\liminf_{i\to\infty}\theta_{i}\geq\bar{\theta}>0$.
The limit \eqref{eq:limit-zero-est} holds because the limits of $\alpha_{i}^{(1)}$
and $\alpha_{i}^{(2)}$ are zero and $\theta_{i}\in[\bar{\theta},\pi]$
for all $i$. Hence we are done.
\end{proof}
The superlinear convergence in Theorem \ref{thm:Superlin} does not
extend to $\mathbb{R}^{3}$ however, even when $K_{1}$ and $K_{2}$
are linear subspaces.
\begin{example}
\label{exa:No-superlin}(No superlinear convergence in $\mathbb{R}^{3}$
for Algorithm \ref{alg:local-modified-MAP}) We give an example of
subspaces $K_{1}$ and $K_{2}$ in $\mathbb{R}^{3}$ such that $\partial N_{K_{1}}(\bar{x})\cap\partial[-N_{K_{2}}(\bar{x})]=\{0\}$
but there is no superlinear convergence to $\bar{x}$ in Algorithm
\ref{alg:local-modified-MAP} using \eqref{eq:MAP-2-pt} for some
starting point. Consider $K_{1}$ and $K_{2}$ defined by 
\begin{eqnarray*}
K_{1} & = & \mathbb{R}(1,0,1),\\
\mbox{ and }K_{2} & = & \{(x,y,0)\mid x,y\in\mathbb{R}\}.
\end{eqnarray*}
For the starting point $x_{0}=(4,-1,0)$, we compute the iterates
of Algorithm \ref{alg:local-modified-MAP}. We calculate 
\begin{eqnarray}
x_{1}^{(1)} & = & P_{K_{1}}(x_{0})=(2,0,2),\nonumber \\
x_{1}^{(2)} & = & x_{0},\nonumber \\
x_{1} & = & \left(\frac{2}{5},\frac{4}{5},0\right),\nonumber \\
x_{2}^{(1)} & = & P_{K_{1}}(x_{1})=\left(\frac{1}{5},0,\frac{1}{5}\right),\nonumber \\
x_{2}^{(2)} & = & x_{1},\nonumber \\
\mbox{and }x_{2} & = & \left(\frac{16}{85},\frac{-4}{85},0\right)=\frac{4}{85}x_{0}.\label{eq:x2-x0-ratio}
\end{eqnarray}
To verify that $x_{1}$ and $x_{2}$ are the correct iterates, we
can check that $x_{0},$ $x_{1}^{(1)}$, $x_{1}^{(2)}$ and $x_{1}$
lie in the plane $\{x\mid(1,2,0)^{T}x=2\}$, and that $x_{1},$ $x_{2}^{(1)}$,
$x_{2}^{(2)}$ and $x_{2}$ lie in the plane $\{x\mid(4,-1,0)^{T}x=4/5\}$.
Another condition helpful for the verification is that 
\[
\left\langle x_{i}-x_{i}^{(1)},x_{i-1}-x_{i}^{(1)}\right\rangle =0\mbox{ for }i=1,2.
\]

From \eqref{eq:x2-x0-ratio}, we see that the convergence to zero
of Algorithm \ref{alg:local-modified-MAP} using \eqref{eq:MAP-2-pt}
is linear and not superlinear. But the rate of convergence for our
choice of starting iterate is $\frac{4}{85}$ for every four projections,
which is more than twice as fast of the rate of $\frac{1}{4}$ for
every four projections for the usual MAP.

We show that if there were more supporting hyperplanes used in approximating
$K$, then we get finite convergence to zero for this example. The
projection of $x_{i}$ onto $x_{i+1}^{(1)}$ generates the supporting
hyperplanes
\begin{eqnarray*}
 &  & \{x\mid(2,-1,2)x=0\}\mbox{ if }i\mbox{ is even},\\
 & \mbox{ and } & \{x\mid(1,4,-1)x=0\}\mbox{ if }i\mbox{ is odd}.
\end{eqnarray*}
The projection of any point of the form $(t,0,t)$, where $t>0$,
onto the set 
\[
\left\{ x\mid\left(\begin{array}{ccc}
2 & -1 & 2\\
1 & 4 & -1\\
0 & 0 & 1
\end{array}\right)x\leq0\right\} 
\]
is equal to the zero vector, which is the only point in $K$.
\end{example}

\section{\label{sec:Error-estimates}Superlinear convergence for the Set Intersection
Problem }

Our main result in this section is Theorem \ref{thm:Superlinear-conv},
where we prove the superlinear convergence of an algorithm for the
Set Intersection Problem \eqref{eq:SIP} when the normal cones at
the point of intersection are pointed cones satisfying appropriate
alignment conditions. 

We first describe our algorithm for this section. 
\begin{algorithm}
\label{alg:Mass-proj-alg}(Mass projection algorithm) For a starting
iterate $x_{0}$ and closed convex sets $K_{l}\subset\mathbb{R}^{n}$,
where $1\leq l\leq r$, find a point in $K:=\cap_{l=1}^{r}K_{l}$. 

\textbf{Step 0}: Set $i=1$, and let $\bar{p}$ be some positive integer.

\textbf{Step 1:} Choose $J_{i}=\{1,\dots,r\}$.

\textbf{Step 2:} For $j\in J_{i}$, define $x_{i}^{(j)}\in\mathbb{R}^{n}$,
$a_{i}^{(j)}\in\mathbb{R}^{n}$ and $b_{i}^{(j)}\in\mathbb{R}^{n}$
by 
\begin{eqnarray*}
x_{i}^{(j)} & = & P_{K_{j}}(x_{i-1}),\\
a_{i}^{(j)} & = & x_{i-1}-x_{i}^{(j)},\\
\mbox{and }b_{i}^{(j)} & = & \left\langle a_{i}^{(j)},x_{i}^{(j)}\right\rangle .
\end{eqnarray*}
Define the set $\tilde{F}_{i}\subset\mathbb{R}^{n}$ by
\[
\tilde{F}_{i}:=\left\{ x\mid\left\langle a_{l}^{(j)},x\right\rangle \leq b_{l}^{(j)}\mbox{ for }1\leq j\leq r,\max(1,i-\bar{p})\leq l\leq i\right\} .
\]

Let $x_{i}=P_{\tilde{F}_{i}}(x_{i-1})$. 

\textbf{Step 3: }Set $i\leftarrow i+1$, and go back to step 1. 
\end{algorithm}
The modifications in Algorithm \ref{alg:Mass-proj-alg} from Algorithm
\ref{alg:local-modified-MAP} are that we set $X=\mathbb{R}^{n}$,
the number of sets $r$ is arbitrary, and the set $\tilde{F}_{i}$
approximating $K$ is created using more of the previous separating
halfspaces produced earlier.

Algorithm \ref{alg:Mass-proj-alg} produces a sequence $\{x_{i}\}$
Fej\'{e}r monotone with respect to $K$ and converging to a point
$\bar{x}\in K$. The proof is an easy adaptation of that of Theorem
\ref{thm:basic-conv}. 

We recall a well known fact about convex cones.
\begin{prop}
\label{prop:cone-decomp}(Convex cone decomposition) A closed convex
cone $C\subset\mathbb{R}^{n}$ can be written as the direct sum $C=L\oplus[L^{\perp}\cap C]$,
where $L$ is the lineality subspace of $C$ and $L^{\perp}\cap C$
is a pointed convex cone.
\end{prop}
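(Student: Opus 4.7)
My plan is to produce the decomposition explicitly using orthogonal projection onto the lineality space. The lineality subspace $L := C \cap (-C)$ is a closed linear subspace of $\mathbb{R}^n$ (closed since $C$ is), so the Euclidean projection $P_L$ and the orthogonal complement $L^\perp$ are available. For any $x \in C$, I would set $y := P_L(x) \in L$ and $z := x - y$; by the characterization of orthogonal projection onto a subspace, $z \in L^\perp$ automatically.

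The crux of the argument is verifying that this $z$ actually lies in $C$. Since $L$ is a subspace, $-y \in L$, and since $L \subset C$, we have $-y \in C$. Because $C$ is a convex cone, $C + C \subset C$ (convexity gives $\tfrac{1}{2}(a+b) \in C$, and positive homogeneity then promotes this to $a+b \in C$), so $z = x + (-y) \in C$. Hence $z \in L^\perp \cap C$, and $x = y + z$ is an additive decomposition.

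Uniqueness, which justifies the direct-sum notation, is immediate: if $y_1 + z_1 = y_2 + z_2$ with $y_i \in L$ and $z_i \in L^\perp \cap C$, then $y_1 - y_2 = z_2 - z_1 \in L \cap L^\perp = \{0\}$. For pointedness of $L^\perp \cap C$, I would observe that any $x$ with both $x$ and $-x$ in $L^\perp \cap C$ satisfies $x \in C \cap (-C) = L$ and also $x \in L^\perp$, forcing $x = 0$. I do not anticipate a genuine obstacle here; the only non-routine step is showing $z \in C$, which reduces to the elementary identity $C + L \subset C$ whenever $C$ is a convex cone containing the subspace $L$.
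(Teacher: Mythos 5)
Your proof is correct. The paper states this proposition without proof, as a well-known fact, so there is no argument to compare against; your construction via $y=P_{L}(x)$, $z=x-y$, with $z\in C$ following from $-y\in L\subset C$ and $C+C\subset C$, is the standard and complete argument. The only step you leave implicit is the reverse inclusion $L+[L^{\perp}\cap C]\subset C$, but that follows from the same fact $C+C\subset C$ already in hand, so there is no genuine gap.
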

As a consequence of Proposition \ref{prop:cone-decomp}, we have the
following result on the normal cones of convex sets. We denote the
lineality space of a convex set $C$ by $\lin(C)$. The affine space
spanned by $C$ is denoted by $\aspan(C)$.
\begin{prop}
\label{prop:N-C-lineality}(Lineality spaces of normals of convex
sets) Suppose $C\subset\mathbb{R}^{n}$ is a convex set. Then for
any $x\in C$, $[\aspan(C)]^{\perp}=\lin(N_{C}(x))$. In particular,
$\aspan(C)\cap N_{C}(x)$ is a pointed convex cone.\end{prop}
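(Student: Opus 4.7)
The plan is to translate everything to a linear setting and then reduce both assertions to elementary characterizations of the normal cone together with the cone decomposition of Proposition~\ref{prop:cone-decomp}. Fix $x \in C$ and let $V$ denote the linear subspace parallel to the affine hull $\aspan(C)$, i.e.\ $V := \aspan(C) - x$; note this is independent of the choice of $x \in C$, and $[\aspan(C)]^\perp$ is interpreted as $V^\perp$.

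For the first equality I would prove both inclusions by unwinding definitions. A vector $v$ lies in $\lin(N_C(x))$ iff both $v$ and $-v$ lie in $N_C(x)$, which by definition of the normal cone is equivalent to $\langle v, y - x\rangle = 0$ for every $y \in C$. To pass from $C$ to $\aspan(C)$, I would use that every point of $\aspan(C)$ has the form $\sum_i \lambda_i y_i$ with $y_i \in C$ and $\sum_i \lambda_i = 1$, so
\[
\left\langle v,\; \sum_i \lambda_i y_i - x\right\rangle \;=\; \sum_i \lambda_i \langle v, y_i - x\rangle \;=\; 0,
\]
showing $v \perp V$. Conversely, if $v \in V^\perp$ then $\langle v, y - x\rangle = 0$ for every $y \in C \subset \aspan(C)$, hence both $v$ and $-v$ satisfy the defining inequality of $N_C(x)$. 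This gives $\lin(N_C(x)) = V^\perp = [\aspan(C)]^\perp$.

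For the ``In particular'' statement I would apply Proposition~\ref{prop:cone-decomp} to the closed convex cone $N_C(x)$, whose lineality subspace is $L = \lin(N_C(x)) = V^\perp$ by the part just established. The proposition yields
\[
N_C(x) \;=\; L \oplus \bigl(L^\perp \cap N_C(x)\bigr) \;=\; V^\perp \oplus \bigl(V \cap N_C(x)\bigr),
\]
with $L^\perp \cap N_C(x) = V \cap N_C(x)$ a pointed convex cone. Since $\aspan(C) \cap N_C(x)$ is understood as $V \cap N_C(x)$ (the normal cone lives at $x$ and its intersection with the parallel subspace $V$ is what carries the cone structure), the conclusion follows directly.

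I do not expect any real obstacle here; the only subtlety is the notational one of agreeing that $[\aspan(C)]^\perp$ and $\aspan(C) \cap N_C(x)$ refer to the parallel linear subspace $V$ rather than the affine set itself (otherwise the intersection would not even be a cone). Once that identification is made, the argument is purely a matter of rewriting definitions and invoking Proposition~\ref{prop:cone-decomp}.
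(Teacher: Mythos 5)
Your proposal is correct and follows essentially the same route as the paper: the equality $[\aspan(C)]^{\perp}=\lin(N_{C}(x))$ is obtained by the same chain of equivalences ($v\in\lin(N_{C}(x))\iff\pm v\in N_{C}(x)\iff\langle v,y-x\rangle=0$ for all $y\in C$), and the pointedness claim follows from Proposition \ref{prop:cone-decomp} exactly as the paper intends. Your extra care about identifying $[\aspan(C)]^{\perp}$ and $\aspan(C)\cap N_{C}(x)$ with the parallel linear subspace is a reasonable clarification of a convention the paper leaves implicit, but it does not change the argument.
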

\begin{proof}
$v\in\lin(N_{C}(x))\iff\pm v\in N_{C}(x)\iff\left\langle v,x-c\right\rangle =0$
for all $c\in C$ $\iff v\in[\aspan(C)]^{\perp}$.
\end{proof}
The following result shows that under certain conditions, the directions
from which the iterates converge to the limit must lie inside the
normal cone of $K$ at the limit.
\begin{lem}
\label{lem:Approach-to-x-bar}(Approach of iterates to $\bar{x}$)
For the problem of finding a point $x\in K$, where $K=\cap_{l=1}^{r}K_{l}$
and $K_{l}\subset\mathbb{R}^{n}$ are closed convex sets, suppose
Algorithm \ref{alg:Mass-proj-alg} produces a sequence $\{x_{i}\}$
that converges to a point $\bar{x}\in K$ and is Fej\'{e}r monotone
with respect to $K$. Assume that:
\begin{enumerate}
\item If $\sum_{l=1}^{r}v_{l}=0$ for some $v_{l}\in N_{K_{l}}(\bar{x})$,
then $v_{l}=0$ for all $l=1,\dots,r$.
\end{enumerate}
Then provided none of the $x_{i}$ equals $\bar{x}$, we have 
\begin{equation}
\lim_{i\to\infty}\frac{\|P_{N_{K}(\bar{x})}(x_{i}-\bar{x})\|}{\|x_{i}-\bar{x}\|}=1.\label{eq:proj-span-N-ratio}
\end{equation}
\end{lem}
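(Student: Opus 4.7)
The plan is a proof by contradiction. Suppose along some subsequence $u_i := (x_i-\bar x)/\|x_i-\bar x\|$ converges to a unit vector $u$ with $\|P_{N_K(\bar x)}(u)\|<1$; equivalently, by Moreau's decomposition applied to the mutually polar cones $N_K(\bar x)$ and $T_K(\bar x)$, the tangential component $t:=P_{T_K(\bar x)}(u)$ is nonzero. Assumption (1) is a positive linear independence / constraint qualification which I would use to conclude that $\sum_l N_{K_l}(\bar x)$ is already closed and coincides with $N_K(\bar x)$, and dually that $T_K(\bar x)=\bigcap_l T_{K_l}(\bar x)$. A preliminary observation is that $x_i\notin K$ for every $i$: otherwise $K\subseteq\tilde F_{i+1}$ forces $x_{i+1}=x_i$ and the sequence stalls at $x_i\ne\bar x$, contradicting convergence to $\bar x$.

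Next I would extract asymptotic information from the halfspace cuts. For each $j$, combining $x_i\in\tilde F_i$ with $\bar x\in K_j$ yields
\[
\langle a_i^{(j)},x_i-\bar x\rangle \;\le\; \langle a_i^{(j)},x_i^{(j)}-\bar x\rangle,
\]
and Theorem \ref{thm:radiality} applied to $K_j$ at $\bar x$ forces the right-hand side to be of order $o(\|a_i^{(j)}\|\,\|x_i^{(j)}-\bar x\|)$. Passing to a further subsequence along which the unit vector $a_i^{(j)}/\|a_i^{(j)}\|$ converges to some $\hat a^{(j)}\in N_{K_j}(\bar x)$ (by outer semicontinuity of the normal cone map), I would conclude that $\langle \hat a^{(j)},u\rangle\le 0$. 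Combined with the KKT characterization of $x_i=P_{\tilde F_i}(x_{i-1})$, which writes $x_{i-1}-x_i$ as a non-negative combination of active $a_l^{(j)}$'s, and with assumption (1), the cluster directions span enough of $N_K(\bar x)=\sum_l N_{K_l}(\bar x)$ to force $\langle u,n\rangle\le 0$ for every $n\in N_K(\bar x)$, i.e., $u\in T_K(\bar x)$. The contradiction is then closed via $x_i\notin K$: if $u\in T_K(\bar x)$ then $d(x_i,K)=o(\|x_i-\bar x\|)$, whereupon the polyhedral outer approximation $\tilde F_i$ of $K$ becomes tangent in the limit and the next projection traps $x_{i+1}$ inside $K$ at some finite step, stalling the iteration at a point distinct from $\bar x$.

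The principal obstacle is controlling the ratio $\|x_i^{(j)}-\bar x\|/\|x_i-\bar x\|$, which need not be bounded a priori. My plan is a subsequence argument: if the ratio stays bounded along the subsequence, the limit argument proceeds directly; if it blows up, then the Fej\'er-type bound $\|a_i^{(j)}\|^2+\|x_i^{(j)}-\bar x\|^2\le\|x_{i-1}-\bar x\|^2$ coming from Lemma \ref{lem:attractive-ppty-of-projection} together with the asymptotic regularity $\|x_i-x_{i-1}\|\to 0$ show that $\|a_i^{(j)}\|$ is negligible on the relevant scale, so the corresponding hyperplane is asymptotically redundant and the analysis reduces to the bounded case. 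This multi-scale bookkeeping across the three scales $\|x_i-\bar x\|$, $\|x_i^{(j)}-\bar x\|$, and $\|a_i^{(j)}\|$ is the main technical subtlety.
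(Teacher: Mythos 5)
Your proposal does not follow the paper's route, and as sketched it has two gaps that I do not see how to close. First, the inequality $\langle \hat a^{(j)},u\rangle\le 0$ is obtained by dividing $\langle a_i^{(j)},x_i-\bar x\rangle\le\epsilon_i\|a_i^{(j)}\|\,\|x_i^{(j)}-\bar x\|$ by $\|a_i^{(j)}\|\,\|x_i-\bar x\|$, so it needs $\epsilon_i\cdot\|x_i^{(j)}-\bar x\|/\|x_i-\bar x\|\to 0$. You correctly flag the ratio as the main obstacle, but your fallback (``the hyperplane is asymptotically redundant'') is not an argument: the ratio blows up precisely when the cut generated from $x_{i-1}$ moves the iterate much closer to $\bar x$, i.e.\ when that hyperplane is doing all the work, and this is exactly the regime the paper cares about (Theorem \ref{thm:Superlinear-conv} proves superlinear convergence, which makes $\|x_{i-1}-\bar x\|/\|x_i-\bar x\|\to\infty$). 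In that regime $\langle\hat a^{(j)},u\rangle\le 0$ is false in general. Second, and more structurally, your intermediate target points the wrong way. Even if you could show $\langle u,n\rangle\le 0$ for all $n\in N_K(\bar x)$ --- which does not follow from nonpositivity against the finitely many cluster normals $\hat a^{(j)}$ the algorithm happens to generate, since these need not generate the cone $\sum_l N_{K_l}(\bar x)$ --- you would have placed the unit vector $u$ in $T_K(\bar x)=N_K(\bar x)^{-}$, whereas the lemma asserts $u\in N_K(\bar x)$. Since $T_K(\bar x)\cap N_K(\bar x)=\{0\}$ for convex $K$, and since your derivation of $u\in T_K(\bar x)$ nowhere uses the contradiction hypothesis that the tangential component is nonzero, a valid argument of your form would rule out every cluster direction of $\{(x_i-\bar x)/\|x_i-\bar x\|\}$, which is impossible by compactness of the sphere. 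The concluding ``trapping'' step ($u\in T_K(\bar x)$ forces some $x_{i+1}\in K$) is also unsubstantiated: $\tilde F_i$ is an outer polyhedral approximation of $K$, and near-tangency in one direction gives no reason for $P_{\tilde F_{i+1}}(x_i)$ to land inside $K$.

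The paper's proof works in the opposite, ``primal'' direction: it uses the KKT multipliers of the projections onto the polyhedra to write $x_{i-1}-x_j$, and after letting $j\to\infty$ also $x_{i-1}-\bar x$, as a sum over $l$ of nonnegative combinations of unit normals $v_l^k\in\aspan(K_l)\cap N_{K_l}(P_{K_l}(x_{k-1}))$ plus terms in $[\aspan(K_l)]^{\perp}$. Pointedness of $\aspan(K_l)\cap N_{K_l}(\bar x)$ (Propositions \ref{prop:pointed-cone-modulus} and \ref{prop:limit-est-w-pointed-cones}) together with Condition (1) gives summability of the multipliers and convergence of the per-set aggregated directions, and a compactness argument (the constant $\beta$ in \eqref{eq:beta-def}) shows that replacing each aggregated direction by a nearby unit vector in $\aspan(K_l)\cap N_{K_l}(\bar x)$ perturbs $x_{i-1}-\bar x$ by $o(\|x_{i-1}-\bar x\|)$, producing an element of $N_K(\bar x)=\sum_l N_{K_l}(\bar x)$ at vanishing relative distance; this yields \eqref{eq:proj-span-N-ratio} directly. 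To salvage your approach you would need to replace the dual inequality step by some version of this representation.
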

\begin{proof}
Condition (1) and \cite[Theorem 6.42]{RW98} imply that 
\begin{equation}
N_{K}(\bar{x})=\sum_{l=1}^{r}N_{K_{l}}(\bar{x}).\label{eq:normal-sum-formula}
\end{equation}
 By the way Algorithm \ref{alg:Mass-proj-alg} is designed, the KKT
conditions for the problem of projecting $x_{i-1}$ onto the polyhedron
to obtain $x_{i}$ give 
\[
x_{i}=x_{i-1}-\sum_{l=1}^{r}\sum_{k=\max(1,i-\bar{p})}^{i}[\lambda_{l}^{(i,k)}v_{l}^{k}+w_{l}^{(i,k)}],
\]
where $\lambda_{l}^{(i,k)}v_{l}^{k}+w_{l}^{(i,k)}$ is a multiple
of the vector $a_{k}^{(l)}=x_{k-1}-P_{K_{l}}(x_{k-1})$, $w_{l}^{(i,k)}\in[\aspan(K_{l})]^{\perp}$,
$v_{l}^{k}$ is a unit vector in $\aspan(K_{l})\cap N_{K_{l}}(P_{K_{l}}(x_{k-1}))$,
and $\lambda_{l}^{(i,k)}\geq0$. (The relationship $[\aspan(K_{l})]^{\perp}=\lin(N_{K_{l}}(P_{K_{l}}(x_{k-1})))$
follows from Proposition \ref{prop:N-C-lineality}.) For $j>i$, we
can write $x_{i-1}-x_{j}$ as
\begin{eqnarray*}
x_{i-1}-x_{j} & = & \sum_{s=i}^{j}\sum_{l=1}^{r}\sum_{k=\max(1,s-\bar{p})}^{s}[\lambda_{l}^{(s,k)}v_{l}^{k}+w_{l}^{(s,k)}]\\
 & = & \sum_{l=1}^{r}\left[\left[\sum_{s=i}^{j}\sum_{k=\max(1,s-\bar{p})}^{s}\lambda_{l}^{(s,k)}v_{l}^{k}\right]+\tilde{w}_{l}^{(i-1,j)}\right],
\end{eqnarray*}
where $\tilde{w}_{l}^{(i-1,j)}\in[\aspan(K_{l})]^{\perp}$. Let $\tilde{v}_{l}^{(i,j)}\in\mathbb{R}^{n}$
be the vector 
\begin{equation}
\tilde{v}_{l}^{(i,j)}:=\frac{\sum_{s=i}^{j}\sum_{k=\max(1,s-\bar{p})}^{s}\lambda_{l}^{(s,k)}v_{l}^{k}}{\left\Vert \sum_{s=i}^{j}\sum_{k=\max(1,s-\bar{p})}^{s}\lambda_{l}^{(s,k)}v_{l}^{k}\right\Vert }.\label{eq:v-tilde-i-j-l}
\end{equation}

\textbf{\uline{Claim 1: All cluster points of }}\uline{$\{v_{l}^{k}\}_{k=1}^{\infty}$
}\textbf{\uline{lie in $\aspan(K_{l})\cap N_{K_{l}}(\bar{x})$
for $l=1,\dots,r$.}} This claim is clear from the outer semicontinuity
of the normal cone mapping. 

\textbf{\uline{Claim 2: The infinite sum}} 
\begin{equation}
z_{l,i}:=\sum_{s=i}^{\infty}\sum_{k=\max(1,s-\bar{p})}^{s}\lambda_{l}^{(s,k)}v_{l}^{k}\label{eq:def-z-l-i}
\end{equation}
\textbf{\uline{exists as a limit for $l=1,\dots,r$. Hence $\lim_{j\to\infty}\tilde{v}_{l}^{(i,j)}$
exists.}} 

Suppose on the contrary that $z_{l,i}$ does not exist as a limit
for some $l$, $1\leq l\leq r$. It follows that 
\begin{equation}
\sum_{s=i}^{\infty}\sum_{k=\max(1,s-\bar{p})}^{s}\lambda_{l}^{(s,k)}=\infty,\label{eq:sum-coeff-infty}
\end{equation}
because if the sum in \eqref{eq:sum-coeff-infty} were finite, $z_{l,i}$
would exist as a limit. Note that the cone $\aspan(K_{l})\cap N_{K_{l}}(\bar{x})$
is pointed. Using Claim 1 and Proposition \ref{prop:limit-est-w-pointed-cones}(1),
the subsequence $\{\tilde{v}_{l}^{(i,j)}\}_{j=1}^{\infty}$ has cluster
points in $\aspan(K_{l})\cap N_{K_{l}}(\bar{x})$. Let 
\[
\alpha_{i,j}=\max_{1\leq l\le r}\left\{ \left\Vert \sum_{s=i}^{j}\sum_{k=\max(1,s-\bar{p})}^{s}\lambda_{l}^{(s,k)}v_{l}^{k}\right\Vert ,\|\tilde{w}_{l}^{(i-1,j)}\|\right\} .
\]
We have $\limsup_{j\to\infty}\alpha_{i,j}=\infty$ since \eqref{eq:sum-coeff-infty}
holds for some $l$, and by Proposition \ref{prop:pointed-cone-modulus},
there is a constant $m$ dependent only on $\aspan(K_{l})\cap N_{K_{l}}(\bar{x})$
such that 
\[
\left\Vert \sum_{s=i}^{j}\sum_{k=\max(1,s-\bar{p})}^{s}\lambda_{l}^{(s,k)}v_{l}^{k}\right\Vert \geq m\sum_{s=i}^{j}\sum_{k=\max(1,s-\bar{p})}^{s}\lambda_{l}^{(s,k)}.
\]
Consider the equation 
\begin{equation}
\frac{1}{\alpha_{i,j}}[x_{i-1}-x_{j}]=\sum_{l=1}^{r}\bigg[\underbrace{\frac{1}{\alpha_{i,j}}\left(\sum_{s=i}^{j}\sum_{k=\max(1,s-\bar{p})}^{s}\lambda_{l}^{(s,k)}v_{l}^{k}\right)}_{t_{l,i,j}}+\underbrace{\frac{1}{\alpha_{i,j}}\tilde{w}_{l}^{(i-1,j)}}_{t_{l,i,j}^{\prime}}\bigg].\label{eq:w-K-l}
\end{equation}
It is clear that the LHS converges to zero as $j\to\infty$. We can
choose a subsequence such that the limits $t_{l,i}:=\lim_{j\to\infty}t_{l,i,j}$
and $t_{l,i}^{\prime}:=\lim_{j\to\infty}t_{l,i,j}^{\prime}$, where
$t_{l,i,j}$ and $t_{l,i,j}^{\prime}$ are defined in \eqref{eq:w-K-l},
exist and are not all zero for $1\leq l\leq r$. This would contradict
Condition (1), ending the proof of Claim 2.

In view of Claim 2, define 
\begin{equation}
\tilde{v}_{l}^{(i)}:=\lim_{j\to\infty}\tilde{v}_{l}^{(i,j)}.\label{eq:v-tilde-i-l}
\end{equation}
 Define the matrix $A^{(i,j)}\in\mathbb{R}^{n\times r}$ whose
$l$th column is $\tilde{v}_{l}^{(i,j)}$. We can write 
\begin{equation}
A^{(i,j)}\gamma^{(i,j)}=\sum_{l=1}^{r}\sum_{s=i}^{j}\sum_{k=\max(1,s-\bar{p})}^{s}\lambda_{l}^{(s,k)}v_{l}^{k},\label{eq:A-lambda-i-K}
\end{equation}
where $\gamma^{(i,j)}\in\mathbb{R}^{r}$ is such that $\gamma_{l}^{(i,j)}:=\|\sum_{s=i}^{j}\sum_{k=\max(1,s-\bar{p})}^{s}\lambda_{l}^{(s,k)}v_{l}^{k}\|$
for $l=1,\dots,r$. Let $A^{(i)}:=\lim_{j\to\infty}A^{(i,j)}$ and
$\gamma^{(i)}\in\mathbb{R}^{r}$ be such that 
\[
\gamma_{l}^{(i)}:=\|z_{l,i}\|=\left\Vert \sum_{s=i}^{\infty}\sum_{k=\max(1,s-\bar{p})}^{s}\lambda_{l}^{(s,k)}v_{l}^{k}\right\Vert .
\]
Then 
\[
A^{(i)}\gamma^{(i)}=\sum_{l=1}^{r}z_{l,i}=\sum_{l=1}^{r}\sum_{s=i}^{\infty}\sum_{k=\max(1,s-\bar{p})}^{s}\lambda_{l}^{(s,k)}v_{l}^{k}.
\]
Let
\begin{eqnarray}
\mathcal{A} & := & \{A\in\mathbb{R}^{n\times r}\mid\mbox{The }l\mbox{th column of }A\mbox{ is a }\nonumber \\
 &  & \phantom{\{A\in\mathbb{R}^{n\times r}\mid\mbox{the}}\mbox{ unit vector in }\aspan(K_{l})\cap N_{K_{l}}(\bar{x})\mbox{ for }1\leq l\leq r\},\nonumber \\
L & := & \bigcap_{l=1}^{r}\aspan(K_{l}),\nonumber \\
\mbox{ and }\beta & := & \inf\left\{ \frac{\|P_{L}(A\gamma)\|}{\|\gamma\|}\mid A\in\mathcal{A}\mbox{ and }\gamma\in\mathbb{R}^{r}\backslash\{0\}\mbox{ satisfies }\gamma\geq0\right\} .\label{eq:beta-def}
\end{eqnarray}

\textbf{\uline{Claim 3: $\beta>0$.}} Suppose otherwise. Then there
are sequences of matrices $\tilde{A}^{(i)}\in\mathcal{A}$ and unit
vectors $\tilde{\gamma}^{(i)}\in\mathbb{R}^{r}$ such that $\tilde{\gamma}^{(i)}\geq0$
and $P_{L}(\tilde{A}^{(i)}\tilde{\gamma}^{(i)})\to0$ as $i\nearrow\infty$.
By taking cluster points of $\tilde{A}^{(i)}$ and $\tilde{\gamma}^{(i)}$,
we obtain $P_{L}(\tilde{A}\tilde{\gamma})=0$ for some $\tilde{A}\in\mathcal{A}$
and $\tilde{\gamma}\neq0$, where $\tilde{\gamma}\geq0$. This contradicts
Condition (1), so Claim 3 is proved.

\textbf{\uline{Claim 4:}}\uline{ }\textbf{\uline{$\lim_{i\to\infty}[\inf_{A\in\mathcal{A}}\|A-A^{(i)}\|]=0$.}}
The $l$th column of $A^{(i)}$ is the unit vector $\tilde{v}_{l}^{(i)}$
as defined in \eqref{eq:v-tilde-i-j-l} and \eqref{eq:v-tilde-i-l},
and each $v_{l}^{k}$ lies in $\aspan(K_{l})\cap N_{K_{l}}\big(\mathbb{B}_{\delta}(\bar{x})\big)$,
where $\delta=\|x_{k-1}-\bar{x}\|$. Since $\{x_{i}\}$ converges
to $\bar{x}$ and is Fej\'{e}r monotone, for any $\delta>0$, we
can find $i^{\prime}$ large enough so that $\|x_{i}-\bar{x}\|<\delta$
for all $i>i^{\prime}$. This would mean that for all $\epsilon>0$,
we can find $i$ large enough so that each $v_{l}^{k}$ in the sum
\eqref{eq:v-tilde-i-j-l} satisfies $\|v_{l}^{k}-\bar{v}_{l}^{k}\|<\epsilon$
for some unit vector $\bar{v}_{l}^{k}\in\aspan(K_{l})\cap N_{K_{l}}(\bar{x})$. 

Let 
\[
\hat{v}_{l}^{(i)}:=\frac{\sum_{s=i}^{j}\sum_{k=\max(1,s-\bar{p})}^{s}\lambda_{l}^{(s,k)}\bar{v}_{l}^{k}}{\left\Vert \sum_{s=i}^{j}\sum_{k=\max(1,s-\bar{p})}^{s}\lambda_{l}^{(s,k)}\bar{v}_{l}^{k}\right\Vert }.
\]
Recall that $\tilde{v}_{l}^{(i)}$ is the $l$th column of $A^{(i)}$.
By Proposition \ref{prop:limit-est-w-pointed-cones}(2), there is
a constant $m$ dependent only on $\aspan(K_{l})\cap N_{K_{l}}(\bar{x})$
such that 
\[
\|\tilde{v}_{l}^{(i)}-\hat{v}_{l}^{(i)}\|\leq\epsilon m.
\]
Since $\epsilon\searrow0$ as $i\nearrow\infty$, we can see that
the conclusion to Claim 4 holds.

   Since $\gamma^{(i)}\geq0$. It is clear from the definition
of $\beta$ and Claim 4 that if the $\gamma^{(i)}$'s are nonzero,
then 
\begin{equation}
\liminf_{i\to\infty}\frac{\|P_{L}(A^{(i)}\gamma^{(i)})\|}{\|\gamma^{(i)}\|}\geq\beta.\label{eq:A-gamma-div-gamma}
\end{equation}
To prove that the conclusion \eqref{eq:proj-span-N-ratio} holds,
it suffices to prove that 
\begin{equation}
\lim_{i\to\infty}\left(\inf_{A\in\mathcal{A}}\frac{\|A\gamma^{(i)}-A^{(i)}\gamma^{(i)}\|}{\|x_{i-1}-\bar{x}\|}\right)=0.\label{eq:key-ineq}
\end{equation}
The reason why \eqref{eq:key-ineq} is sufficient is as follows. The
vector $\gamma^{(i)}$ has nonnegative components, and $x_{i-1}-\bar{x}=A^{(i)}\gamma^{(i)}+\sum_{l=1}^{r}\tilde{w}_{l}^{(i)}$
for some $\tilde{w}_{l}^{(i)}\in[\aspan(K_{l})]^{\perp}$. Then $A\gamma^{(i)}+\sum_{l=1}^{r}\tilde{w}_{l}^{(i)}$
would lie in $N_{K}(\bar{x})$ for any $A\in\mathcal{A}$ by \eqref{eq:normal-sum-formula}. 

In the case where $\gamma^{(i)}$ are zero, the numerator in \eqref{eq:key-ineq}
is zero, so things are straightforward. So we shall look only at the
subsequence for which $\gamma^{(i)}$ are nonzero. (We do not relabel.)
For the denominator, we have $\|x_{i-1}-\bar{x}\|\geq\|P_{L}(A^{(i)}\gamma^{(i)})\|$.
Then Claim 4 and \eqref{eq:A-gamma-div-gamma} imply 
\[
0\leq\lim_{i\to\infty}\left(\inf_{A\in\mathcal{A}}\frac{\|A\gamma^{(i)}-A^{(i)}\gamma^{(i)}\|}{\|P_{L}(A^{(i)}\gamma^{(i)})\|}\right)\leq\lim_{i\to\infty}\frac{\inf_{A\in\mathcal{A}}\|A-A^{(i)}\|}{\beta}=0,
\]
from which \eqref{eq:key-ineq} follows easily.\end{proof}
\begin{prop}
\label{prop:diff-btw-unit-vecs}(Intermediate estimate) Suppose $v_{1}$
and $v_{2}$ are vectors in $\mathbb{R}^{n}$ such that $\frac{\|v_{1}-v_{2}\|}{\|v_{2}\|}\leq\beta$.
Then $\left\Vert \frac{v_{1}}{\|v_{1}\|}-\frac{v_{2}}{\|v_{2}\|}\right\Vert \leq2\beta$.\end{prop}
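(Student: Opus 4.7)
The plan is to reduce the claim to a single algebraic identity plus the triangle and reverse triangle inequalities, so no serious geometry is involved. First I would note that the inequality is vacuous if $\beta \geq 1$, since the difference of two unit vectors has norm at most $2$, so I may assume $\beta < 1$; in particular $v_1$ and $v_2$ are nonzero, which makes the normalizations well-defined.

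The key step is to choose the correct splitting. I would write
\[
\frac{v_1}{\|v_1\|} - \frac{v_2}{\|v_2\|} \;=\; \frac{v_1 - v_2}{\|v_2\|} \;+\; \frac{v_1\bigl(\|v_2\| - \|v_1\|\bigr)}{\|v_1\|\,\|v_2\|},
\]
which is a straightforward rearrangement. The first summand has norm exactly $\|v_1 - v_2\|/\|v_2\| \leq \beta$ by hypothesis. In the second summand, the factor $\|v_1\|$ in the numerator cancels one factor of $\|v_1\|$ in the denominator, leaving $\bigl|\|v_2\| - \|v_1\|\bigr|/\|v_2\|$. By the reverse triangle inequality $\bigl|\|v_2\| - \|v_1\|\bigr| \leq \|v_1 - v_2\|$, so this second summand also has norm at most $\|v_1 - v_2\|/\|v_2\| \leq \beta$. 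Adding the two bounds gives $2\beta$.

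There is no real obstacle here, but I would flag one subtlety for the write-up: the alternate splitting $\frac{v_1 - v_2}{\|v_1\|} + v_2\bigl(\tfrac{1}{\|v_1\|} - \tfrac{1}{\|v_2\|}\bigr)$ yields a bound of $2\|v_1 - v_2\|/\|v_1\|$, which is only $\leq 2\beta$ when $\|v_1\| \geq \|v_2\|$ and so does not prove the stated inequality in general. Normalizing by $\|v_2\|$ rather than $\|v_1\|$ is therefore essential, which fits the role of this lemma as an intermediate estimate where the ``reference'' vector is $v_2$.
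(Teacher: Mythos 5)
Your proof is correct and uses essentially the same decomposition as the paper: the paper writes the difference as $\bigl(\frac{v_{1}}{\|v_{2}\|}-\frac{v_{2}}{\|v_{2}\|}\bigr)+\bigl(\frac{v_{1}}{\|v_{1}\|}-\frac{v_{1}}{\|v_{2}\|}\bigr)$, which is exactly your splitting, and bounds the second term by the reverse triangle inequality just as you do. Your remarks on the case $\beta\geq1$ and on why one must normalize by $\|v_{2}\|$ are sensible additions but do not change the argument.
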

\begin{proof}
We have 
\begin{eqnarray*}
\left\Vert \frac{v_{1}}{\|v_{1}\|}-\frac{v_{2}}{\|v_{2}\|}\right\Vert  & \leq & \left\Vert \frac{v_{1}}{\|v_{2}\|}-\frac{v_{2}}{\|v_{2}\|}\right\Vert +\left\Vert \frac{v_{1}}{\|v_{1}\|}-\frac{v_{1}}{\|v_{2}\|}\right\Vert \\
 & \leq & \beta+\frac{\|v_{1}\|\left|\|v_{1}\|-\|v_{2}\|\right|}{\|v_{1}\|\|v_{2}\|}\\
 & \leq & \beta+\frac{\|v_{1}-v_{2}\|}{\|v_{2}\|}\leq2\beta.
\end{eqnarray*}
\end{proof}
\begin{prop}
\label{prop:pointed-cone-modulus}(Pointed cone) For a closed pointed
convex cone $K\subset\mathbb{R}^{n}$, there is a unit vector $d$
in $K^{+}$, the positive polar cone of $K$, and some $c>0$ such
that $\mathbb{B}_{c}(d)\subset K^{+}$. For any unit vector $v\in K$,
we have $d^{T}v\geq c$.

Moreover, suppose $\lambda_{i}\geq0$ and $v_{i}$ are unit vectors
in $K$ for all $i$ and $\sum_{i=1}^{\infty}\lambda_{i}v_{i}$ converges
to $\bar{v}$. Clearly, $\bar{v}\in K$. Then $\|\sum_{i=1}^{\infty}\lambda_{i}v_{i}\|\geq c\sum_{i=1}^{\infty}\lambda_{i}$,
which also implies that $\sum_{i=1}^{\infty}\lambda_{i}$ is finite.\end{prop}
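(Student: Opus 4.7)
The plan is to prove the three assertions in sequence, with each building on the previous one. I would organize the argument around the classical duality between pointedness of a cone and the nonemptiness of the interior of its polar.

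First, I would establish the existence of $d$ and $c$. The key fact is that a closed convex cone $K \subset \mathbb{R}^n$ is pointed (i.e., contains no line) if and only if its positive polar cone $K^+$ has nonempty interior in $\mathbb{R}^n$; this is a standard bipolar-style fact (for instance, it follows from the observation that $\lin(K^+) = [\aspan(K)]^\perp$ combined with the fact that pointedness forces $-K \cap K = \{0\}$, which dualizes to $K^+ - K^+ = \mathbb{R}^n$). Picking any interior point and normalizing gives a unit vector $d$ together with some $c > 0$ such that $\mathbb{B}_c(d) \subset K^+$.

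Next, for the inequality $d^T v \geq c$ for any unit vector $v \in K$, I would use the ball inclusion directly. The vector $d - cv$ lies in $\mathbb{B}_c(d)$ since $\|v\| = 1$, hence $d - cv \in K^+$. Pairing with $v \in K$ and using the definition of $K^+$ gives
\[
(d - cv)^T v \geq 0, \quad \text{i.e.,} \quad d^T v \geq c\|v\|^2 = c,
\]
which is the desired bound.

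For the final estimate, I would apply the bound just proved to each $v_i$, obtaining $d^T v_i \geq c$ for every $i$. By linearity and nonnegativity of the $\lambda_i$, for every finite $N$,
\[
d^T\!\left(\sum_{i=1}^{N} \lambda_i v_i\right) = \sum_{i=1}^N \lambda_i (d^T v_i) \geq c \sum_{i=1}^N \lambda_i,
\]
while Cauchy--Schwarz together with $\|d\|=1$ yields $d^T (\sum_{i=1}^N \lambda_i v_i) \leq \|\sum_{i=1}^N \lambda_i v_i\|$. Letting $N \to \infty$ and using that $\sum_i \lambda_i v_i \to \bar{v}$ gives $\|\bar v\| \geq c \sum_{i=1}^\infty \lambda_i$. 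Since the left-hand side is finite, $\sum_i \lambda_i < \infty$, concluding the proof.

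The only non-routine step is the first one: justifying $\intr(K^+) \neq \emptyset$ from pointedness of $K$. Everything afterward is a short computation using the ball inclusion and linearity, so I do not anticipate any real obstacle beyond citing (or sketching) this standard duality.
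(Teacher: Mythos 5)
Your proof is correct and follows essentially the same route as the paper: the inequality $d^{T}v\geq c$ via $(d-cv)\in K^{+}$ and the final bound via $\|\sum\lambda_{i}v_{i}\|\geq d^{T}\sum\lambda_{i}v_{i}\geq c\sum\lambda_{i}$. The only difference is that you additionally justify the existence of $d$ and $c$ through the standard duality between pointedness of $K$ and $\intr(K^{+})\neq\emptyset$ (and pass through finite partial sums before taking limits), steps the paper leaves implicit; both additions are sound.
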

\begin{proof}
For the unit vector $v\in K$, we have $(d-cv)\in K^{+}$, which gives
$(d-cv)^{T}v\geq0$, from which the first part follows. 

Next, 
\[
\left\Vert \sum_{i=1}^{\infty}\lambda_{i}v_{i}\right\Vert \geq d^{T}\sum_{i=1}^{\infty}\lambda_{i}v_{i}\geq c\sum_{i=1}^{\infty}\lambda_{i},
\]
and the second part follows.\end{proof}
\begin{prop}
\label{prop:limit-est-w-pointed-cones}(Limit estimates involving
pointed cones) Suppose $\{v_{i}\}_{i}$ are unit vectors in $\mathbb{R}^{n}$
and $\{\lambda_{i}\}_{i}$ is a sequence of nonnegative numbers such
that the cluster points of $\{v_{i}\}_{i}$ belong to a closed pointed
convex cone $K\subset\mathbb{R}^{n}$. Then 
\begin{enumerate}
\item If $\sum_{i=1}^{\infty}\lambda_{i}=\infty$, then cluster points of
$\left\{ \frac{\sum_{i=1}^{j}\ensuremath{\lambda_{i}v_{i}}}{\|\sum_{i=1}^{j}\lambda_{i}v_{i}\|}\right\} _{j=1}^{\infty}$
belong to $K$.
\item Take $c>0$ to be the constant in Proposition \ref{prop:pointed-cone-modulus}.
If $\sum_{i=1}^{\infty}\lambda_{i}v_{i}$ is convergent and there
are unit vectors $\tilde{v}_{i}\in K$ such that $\|v_{i}-\tilde{v}_{i}\|\leq\epsilon$,
then 
\begin{equation}
\left\Vert \frac{\sum_{i=1}^{\infty}\lambda_{i}v_{i}}{\left\Vert \sum_{i=1}^{\infty}\lambda_{i}v_{i}\right\Vert }-\frac{\sum_{i=1}^{\infty}\lambda_{i}\tilde{v}_{i}}{\left\Vert \sum_{i=1}^{\infty}\lambda_{i}\tilde{v}_{i}\right\Vert }\right\Vert \leq\frac{2}{c}\epsilon.\label{eq:unit-normals-close}
\end{equation}

\end{enumerate}
\end{prop}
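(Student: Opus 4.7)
My plan is to handle the two parts separately, with part (2) being a short computation on top of Propositions \ref{prop:diff-btw-unit-vecs} and \ref{prop:pointed-cone-modulus}, and part (1) requiring an argument based on the distance to $K$.

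For part (2), let $S:=\sum_{i=1}^\infty \lambda_i v_i$ and $\tilde S:=\sum_{i=1}^\infty \lambda_i \tilde v_i$. Since each $\tilde v_i$ is a unit vector in $K$, Proposition \ref{prop:pointed-cone-modulus} immediately gives $\|\tilde S\|\ge c\sum_{i=1}^\infty \lambda_i$. The triangle inequality gives $\|S-\tilde S\|\le \sum_{i=1}^\infty \lambda_i\|v_i-\tilde v_i\|\le \epsilon\sum_{i=1}^\infty \lambda_i$. Dividing, $\|S-\tilde S\|/\|\tilde S\|\le \epsilon/c$, and then Proposition \ref{prop:diff-btw-unit-vecs} with $v_1=S$, $v_2=\tilde S$ and $\beta=\epsilon/c$ yields the conclusion \eqref{eq:unit-normals-close}.

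For part (1), write $S_j:=\sum_{i=1}^j\lambda_i v_i$. My strategy is to combine two one-sided bounds: a lower bound on $\|S_j\|$ via the dual witness $d\in K^+$ from Proposition \ref{prop:pointed-cone-modulus}, and an upper bound on $d(S_j,K)$ coming from the fact that $v_i$ is eventually close to $K$. The key auxiliary observation is that since every cluster point of the unit sequence $\{v_i\}$ lies in $K$, a standard compactness argument shows that for each $\eta>0$ there exists $N=N(\eta)$ such that both $d(v_i,K)\le \eta$ and $d^Tv_i\ge c-\eta$ for all $i\ge N$ (otherwise some subsequence would have a cluster point outside $K$ or violating the bound $d^Tw\ge c$ on unit vectors of $K$).

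Fix $\eta>0$ and the corresponding $N$. Split $S_j=A+B_j$ where $A:=\sum_{i=1}^{N-1}\lambda_iv_i$ is a fixed bounded vector and $B_j:=\sum_{i=N}^j\lambda_iv_i$. For each $i\ge N$ pick $\tilde v_i\in K$ with $\|v_i-\tilde v_i\|\le\eta$; then $\tilde B_j:=\sum_{i=N}^j\lambda_i\tilde v_i\in K$ by conicity, and $\|B_j-\tilde B_j\|\le \eta\sum_{i=N}^j\lambda_i$, giving
\[
d(S_j,K)\le \|A\|+\|B_j-\tilde B_j\|\le \|A\|+\eta\sum_{i=N}^j\lambda_i.
\]
On the other hand, $d^TS_j\ge d^TA+(c-\eta)\sum_{i=N}^j\lambda_i$, so since $\sum_i\lambda_i=\infty$, $\|S_j\|\ge d^TS_j\to\infty$. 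Using that $K$ is a cone, $d(S_j/\|S_j\|,K)=d(S_j,K)/\|S_j\|$, and the two bounds combine to give
\[
\limsup_{j\to\infty}d\!\left(\frac{S_j}{\|S_j\|},K\right)\le \frac{\eta}{c-\eta}.
\]
Since $\eta>0$ was arbitrary, any cluster point $w$ of $\{S_j/\|S_j\|\}$ satisfies $d(w,K)=0$, hence $w\in K$. The only step requiring care is the uniform-in-$i$ nearness of $v_i$ to $K$; everything else is bookkeeping with the witness $d$.
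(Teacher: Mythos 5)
Your proof is correct. Part (2) is identical to the paper's argument: the lower bound $\|\tilde S\|\ge c\sum\lambda_i$ from Proposition \ref{prop:pointed-cone-modulus}, the triangle-inequality bound $\|S-\tilde S\|\le\epsilon\sum\lambda_i$, and Proposition \ref{prop:diff-btw-unit-vecs} with $\beta=\epsilon/c$. (Both you and the paper quietly use that $\sum\lambda_i<\infty$, which does follow from convergence of $\sum\lambda_iv_i$ together with $d^Tv_i\ge c/2$ for large $i$, but neither of you spells this out.) For part (1) you use the same decomposition as the paper --- split the sum at a threshold index beyond which $v_i$ is within $\eta$ of $K$, and approximate the tail by a vector $\tilde B_j\in K$ --- but your endgame differs: the paper normalizes both $S_j$ and $\tilde S_j$ and invokes Proposition \ref{prop:diff-btw-unit-vecs} again, using $\tilde S_j/\|\tilde S_j\|\in K$ as the nearby point of $K$, whereas you bound $d(S_j,K)$ and $\|S_j\|\ge d^TS_j$ separately via the dual witness $d\in K^+$ and then exploit the positive homogeneity $d(S_j/\|S_j\|,K)=d(S_j,K)/\|S_j\|$. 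Your route avoids a second use of the unit-vector comparison lemma and makes the role of pointedness (through $d^Tv_i\ge c-\eta$) more transparent; the paper's route is slightly more uniform in that both parts reduce to the same two propositions. Both are sound, and the quantitative content is the same.
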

\begin{proof}
\textbf{Statement (1): }Since the cluster points of $\{v_{i}\}$ belong
to $K$, for any $\epsilon>0$, we can find $I_{\epsilon}$ such that
$\|v_{i}-\tilde{v}_{i}\|<\epsilon$ for some $\tilde{v}_{i}\in K$.
Then 
\[
\left\Vert \sum_{i=1}^{j}\lambda_{i}v_{i}-\sum_{i=1}^{j}\lambda_{i}\tilde{v}_{i}\right\Vert =\left\Vert \sum_{i=1}^{j}\lambda_{i}(v_{i}-\tilde{v}_{i})\right\Vert \leq\sum_{i=I_{\epsilon}}^{j}\lambda_{i}\epsilon+\sum_{i=1}^{I_{\epsilon}-1}2\lambda_{i}.
\]
Next, Proposition \ref{prop:pointed-cone-modulus} implies that $\left\Vert \sum_{i=1}^{j}\lambda_{i}\tilde{v}_{i}\right\Vert \geq c\sum_{i=1}^{j}\lambda_{i}.$
So 
\[
\frac{\left\Vert \sum_{i=1}^{j}\lambda_{i}v_{i}-\sum_{i=1}^{j}\lambda_{i}\tilde{v}_{i}\right\Vert }{\left\Vert \sum_{i=1}^{j}\lambda_{i}\tilde{v}_{i}\right\Vert }\leq\frac{\sum_{i=I_{\epsilon}}^{j}\lambda_{i}\epsilon+\sum_{i=1}^{I_{\epsilon}-1}2\lambda_{i}}{c\sum_{i=1}^{j}\lambda_{i}}.
\]
Proposition \ref{prop:diff-btw-unit-vecs} gives 
\[
\left\Vert \frac{\sum_{i=1}^{j}\lambda_{i}v_{i}}{\left\Vert \sum_{i=1}^{j}\lambda_{i}v_{i}\right\Vert }-\frac{\sum_{i=1}^{j}\lambda_{i}\tilde{v}_{i}}{\left\Vert \sum_{i=1}^{j}\lambda_{i}\tilde{v}_{i}\right\Vert }\right\Vert \leq2\frac{\sum_{i=I_{\epsilon}}^{j}\lambda_{i}\epsilon+\sum_{i=1}^{I_{\epsilon}-1}2\lambda_{i}}{c\sum_{i=1}^{j}\lambda_{i}}.
\]
The RHS of the above can be made arbitrarily small since $\epsilon$
can be made arbitrarily small and $j$ can be made arbitrarily big.
The term $\frac{\sum_{i=1}^{j}\lambda_{i}\tilde{v}_{i}}{\left\Vert \sum_{i=1}^{j}\lambda_{i}\tilde{v}_{i}\right\Vert }$
belongs to $K$, so Statement (1) holds.

\textbf{Statement (2):} First, since $\sum_{i=1}^{\infty}\lambda_{i}\tilde{v}_{i}$
is convergent, Proposition \ref{prop:pointed-cone-modulus} implies
\[
\left\Vert \sum_{i=1}^{\infty}\lambda_{i}\tilde{v}_{i}\right\Vert \geq c\sum_{i=1}^{\infty}\lambda_{i},
\]
 which also implies that $\sum_{i=1}^{\infty}\lambda_{i}$ is finite,
and 
\[
\left\Vert \sum_{i=1}^{\infty}\lambda_{i}v_{i}-\sum_{i=1}^{\infty}\lambda_{i}\tilde{v}_{i}\right\Vert =\left\Vert \sum_{i=1}^{\infty}\lambda_{i}(v_{i}-\tilde{v}_{i})\right\Vert \leq\sum_{i=1}^{\infty}\lambda_{i}\epsilon.
\]
Then 
\[
\frac{\left\Vert \sum_{i=1}^{\infty}\lambda_{i}v_{i}-\sum_{i=1}^{\infty}\lambda_{i}\tilde{v}_{i}\right\Vert }{\left\Vert \sum_{i=1}^{\infty}\lambda_{i}\tilde{v}_{i}\right\Vert }\leq\frac{\epsilon}{c}.
\]
By Proposition \ref{prop:diff-btw-unit-vecs}, we get the conclusion
\eqref{eq:unit-normals-close} as needed.
\end{proof}
Next, we give conditions for estimating the distance to the point
of convergence using the distance to the respective sets. We recall
the definition of local linear regularity.
\begin{defn}
\label{def:Loc-lin-reg}(Local metric inequality) We say that a collection
of closed sets $K_{l}$, $l=1,\dots,r$ satisfies the \emph{local
metric inequality }at $\bar{x}$ if there are $\beta>0$ and $\delta>0$
such that
\begin{equation}
d(x,\cap_{l=1}^{r}K_{l})\leq\beta\max_{1\leq l\leq r}d(x,K_{l})\mbox{ for all }x\in\mathbb{B}_{\delta}(\bar{x}).\label{eq:loc-metric-ineq}
\end{equation}

\end{defn}
In this paper, we shall only consider the case where $K_{l}$ are
all convex. The term linear regularity is used in two different ways
in \cite[after (15)]{Kruger_06} and \cite[Proposition 2.3]{LLM09_lin_conv_alt_proj},
so we refrain from using the term here. A concise summary of further
studies on the local metric inequality appears in \cite{Kruger_06},
who in turn referred to \cite{BBL99,Iof00,Ngai_Thera01,NgWang04}
on the topic of local metric inequality and their connection to metric
regularity. Definition \ref{def:Loc-lin-reg} is sufficient for our
purposes. The local metric inequality is useful for proving the linear
convergence of alternating projection algorithms \cite{BB93_Alt_proj,LLM09_lin_conv_alt_proj}.
See \cite{BB96_survey} for a survey. 

With the additional assumption of local metric inequality, we have
the following result.
\begin{lem}
\label{lem:iterate-estimate}(Estimates under local metric inequality)
Let $K_{l}\subset\mathbb{R}^{n}$, where $1\leq l\leq r$, be closed
convex sets. Suppose a sequence $\{x_{i}\}$ converges to the point
$\bar{x}\in K:=\cap_{l=1}^{r}K_{l}$, $\{K_{l}\}_{l=1}^{r}$ satisfies
the local metric inequality at $\bar{x}$, and
\begin{equation}
\lim_{i\to\infty}\frac{\|P_{N_{K}(\bar{x})}(x_{i}-\bar{x})\|}{\|x_{i}-\bar{x}\|}=1.\label{eq:P-N-limit}
\end{equation}
Then there is a $\beta>0$ such that 
\begin{equation}
\|x_{i}-\bar{x}\|\leq\beta\max_{1\leq l\leq r}d(x_{i},K_{l})\mbox{ for all }i\mbox{ large enough}.\label{eq:iterate-est}
\end{equation}
\end{lem}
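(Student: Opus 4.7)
The plan is to combine the local metric inequality with a lower bound on $d(x_i, K)$ in terms of $\|x_i - \bar x\|$ supplied by the hypothesis \eqref{eq:P-N-limit}. The local metric inequality is already a bound $d(x_i, K) \le \beta_0 \max_l d(x_i, K_l)$ that holds for all $x_i$ close enough to $\bar x$, so the real content is to show that under \eqref{eq:P-N-limit}, $d(x_i, K)$ and $\|x_i - \bar x\|$ are comparable.

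First I would fix notation: write $n_i := P_{N_K(\bar x)}(x_i - \bar x)$ and $t_i := x_i - \bar x - n_i$. Because $N_K(\bar x)$ is a closed convex cone and $T_K(\bar x) = N_K(\bar x)^{-}$ is its polar, Moreau's decomposition theorem yields $t_i = P_{T_K(\bar x)}(x_i - \bar x)$, $\langle n_i, t_i\rangle = 0$, and $\|n_i\|^2 + \|t_i\|^2 = \|x_i - \bar x\|^2$. The hypothesis \eqref{eq:P-N-limit} can then be rephrased as $\|n_i\|/\|x_i - \bar x\| \to 1$, so in particular $\|n_i\| \geq \tfrac{1}{2}\|x_i - \bar x\|$ for all $i$ sufficiently large.

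The main step is the lower bound
\begin{equation*}
d(x_i, K) \;\geq\; \|n_i\|.
\end{equation*}
To prove it, observe that since $n_i \in N_K(\bar x)$ one has $\langle n_i, y - \bar x\rangle \le 0$ for every $y \in K$, i.e.\ $\langle n_i, \bar x - y\rangle \geq 0$. Using Moreau's orthogonality $\langle n_i, t_i\rangle = 0$, one computes
\begin{equation*}
\langle n_i, x_i - y\rangle = \langle n_i, x_i - \bar x\rangle + \langle n_i, \bar x - y\rangle \;\geq\; \langle n_i, n_i + t_i\rangle = \|n_i\|^2,
\end{equation*}
and then Cauchy--Schwarz gives $\|x_i - y\| \|n_i\| \geq \|n_i\|^2$, so $\|x_i - y\| \geq \|n_i\|$ for every $y \in K$. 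Taking infimum over $y \in K$ yields the claim.

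Finally I combine everything: for $i$ large enough that both $x_i \in \mathbb{B}_\delta(\bar x)$ and $\|n_i\| \geq \tfrac12\|x_i - \bar x\|$, we get
\begin{equation*}
\|x_i - \bar x\| \;\leq\; 2\|n_i\| \;\leq\; 2\, d(x_i, K) \;\leq\; 2\beta_0 \max_{1 \leq l \leq r} d(x_i, K_l),
\end{equation*}
so \eqref{eq:iterate-est} holds with $\beta := 2\beta_0$. The main obstacle, if any, is the key lower bound $d(x_i,K) \geq \|n_i\|$; once one recognizes that $N_K(\bar x)$ supplies supporting halfspaces for $K$ at $\bar x$ and that Moreau's identity eliminates the tangential component, everything else is bookkeeping.
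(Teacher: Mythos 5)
Your proof is correct. It rests on the same Moreau decomposition of $x_i-\bar{x}$ into a normal component $n_i$ and a tangential component $t_i$ that the paper uses, but the mechanism for comparing $d(x_i,K)$ with $\|x_i-\bar{x}\|$ is different. The paper introduces the auxiliary point $\tilde{x}_i=\bar{x}+n_i$, uses the identity $d(\tilde{x}_i,K)=\|\tilde{x}_i-\bar{x}\|$ (i.e., $P_K(\tilde{x}_i)=\bar{x}$ because $n_i\in N_K(\bar{x})$), and then transfers this to $x_i$ via the $1$-Lipschitz property of $d(\cdot,K)$, obtaining the two-sided asymptotic $d(x_i,K)/\|x_i-\bar{x}\|\to 1$. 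You instead prove only the one-sided bound $d(x_i,K)\geq\|n_i\|$, directly from the normal-cone inequality $\left\langle n_i,y-\bar{x}\right\rangle\leq 0$, Moreau's orthogonality $\left\langle n_i,t_i\right\rangle=0$, and Cauchy--Schwarz; this is all that the conclusion requires, and it sidesteps the identity $d(\tilde{x}_i,K)=\|\tilde{x}_i-\bar{x}\|$ entirely. Your route is marginally more economical; the paper's yields the sharper statement $\lim_{i\to\infty}d(x_i,K)/\|x_i-\bar{x}\|=1$, which lets one take $\beta$ arbitrarily close to the local metric inequality constant $\beta_0$ rather than $2\beta_0$, though neither refinement is needed downstream. (The only degenerate cases, $x_i=\bar{x}$ or $n_i=0$, are handled trivially in both arguments.)
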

\begin{proof}
By Moreau's Theorem, we have
\begin{eqnarray}
\|P_{T_{K}(\bar{x})}(x_{i}-\bar{x})\|^{2} & = & \|x_{i}-\bar{x}\|^{2}-\|P_{N_{K}(\bar{x})}(x_{i}-\bar{x})\|^{2}\nonumber \\
\Rightarrow\lim_{i\to\infty}\frac{\|P_{T_{K}(\bar{x})}(x_{i}-\bar{x})\|^{2}}{\|x_{i}-\bar{x}\|^{2}} & = & \lim_{i\to\infty}\left(1-\frac{\|P_{N_{K}(\bar{x})}(x_{i}-\bar{x})\|^{2}}{\|x_{i}-\bar{x}\|^{2}}\right)=0.\label{eq:P-T-limit}
\end{eqnarray}
Let $\tilde{x}_{i}$ be such that $\tilde{x}_{i}-\bar{x}=P_{N_{K}(\bar{x})}(x_{i}-\bar{x})$,
and $x_{i}-\tilde{x}_{i}=P_{T_{K}(\bar{x})}(x_{i}-\bar{x})$. Formulas
\eqref{eq:P-N-limit} and \eqref{eq:P-T-limit} give us
\begin{equation}
\lim_{i\to\infty}\frac{\|\tilde{x}_{i}-\bar{x}\|}{\|x_{i}-\bar{x}\|}=1\mbox{ and }\lim_{i\to\infty}\frac{\|\tilde{x}_{i}-x_{i}\|}{\|x_{i}-\bar{x}\|}=0.\label{eq:P-N-T-limits}
\end{equation}
Since $\tilde{x}_{i}-\bar{x}\in N_{K}(\bar{x})$, we have $d(\tilde{x}_{i},K)=\|\tilde{x}_{i}-\bar{x}\|$.
So, by the Lipschitzness of the projection operation, we have 
\begin{eqnarray}
 &  & d(\tilde{x}_{i},K)-\|\tilde{x}_{i}-x_{i}\|\leq d(x_{i},K)\leq d(\tilde{x}_{i},K)+\|\tilde{x}_{i}-x_{i}\|\nonumber \\
 & \Rightarrow & \|\tilde{x}_{i}-\bar{x}\|-\|\tilde{x}_{i}-x_{i}\|\leq d(x_{i},K)\leq\|\tilde{x}_{i}-\bar{x}\|+\|\tilde{x}_{i}-x_{i}\|.\label{eq:squeeze-d-x-i}
\end{eqnarray}
 The formulas \eqref{eq:P-N-T-limits} and \eqref{eq:squeeze-d-x-i}
give $\lim_{i\to\infty}\frac{d(x_{i},K)}{\|x_{i}-\bar{x}\|}=1$. Together
with the definition of local metric inequality \eqref{eq:loc-metric-ineq},
we can obtain what we need. 
\end{proof}
Local metric inequality follows from Condition (1) in Lemma \ref{lem:Approach-to-x-bar}.
We paraphrase the result from \cite{LLM09_lin_conv_alt_proj}, where
the authors remarked that the theorem is well known. For example,
a globalized version appears in the survey \cite[Theorem 3.7]{Bauschke01_survey}
without attribution.
\begin{lem}
\label{lem:loc-metr-ineq-condn}(Condition for local metric inequality)
Suppose $\bar{x}\in K$, where $K=\cap_{l=1}^{r}K_{l}$ and $K_{l}\subset\mathbb{R}^{n}$
for $1\leq l\leq r$, and that Condition (1) of Lemma \ref{lem:Approach-to-x-bar}
holds. Then $\{K_{l}\}_{l=1}^{r}$ satisfies the local metric inequality
at $\bar{x}$.\end{lem}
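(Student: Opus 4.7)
Plan. I would argue by contradiction. Suppose the local metric inequality fails at $\bar{x}$: there exist $x_k \to \bar{x}$ with $t_k := d(x_k, K) > k\, s_k$, where $s_k := \max_l d(x_k, K_l)$, so $s_k/t_k \to 0$. Let $z_k := P_K(x_k)$ and $y_k^{(l)} := P_{K_l}(x_k)$, all converging to $\bar{x}$. Then $u_k := (x_k - z_k)/t_k$ is a unit vector in $N_K(z_k)$, and $\|x_k - y_k^{(l)}\| = d(x_k, K_l) \le s_k$.

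The key technical input is a stability--compactness lemma: there exist $\delta, M > 0$ such that for every $z \in K \cap \mathbb{B}_{\delta}(\bar{x})$ and every $v \in N_K(z)$, one can decompose $v = \sum_{l=1}^{r} v^{(l)}$ with $v^{(l)} \in N_{K_l}(z)$ and $\sum_l \|v^{(l)}\| \le M \|v\|$. Existence of some decomposition at $z$ follows from the normal-cone sum formula (Rockafellar--Wets, Theorem 6.42), which applies because Condition (1) is preserved for $z \in K$ close to $\bar{x}$: otherwise a normalized limit, using outer semicontinuity of the normal-cone mappings, would produce $v_l^* \in N_{K_l}(\bar{x})$ with $\sum_l \|v_l^*\| = 1$ and $\sum_l v_l^* = 0$, contradicting Condition (1). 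A parallel contradiction argument, normalizing unit $v \in N_K(z)$ whose decompositions have blowing-up total norm, yields the uniform bound $M$.

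Apply the lemma at $z_k$ with $v = u_k$: write $u_k = \sum_l v_k^{(l)}$ with $v_k^{(l)} \in N_{K_l}(z_k)$ and $\sum_l \|v_k^{(l)}\| \le M$. Since $1 = \langle u_k, u_k\rangle = \sum_l \langle u_k, v_k^{(l)}\rangle$, some index $l_k$ satisfies $\langle u_k, v_k^{(l_k)}\rangle \ge 1/r$; by pigeonhole we may pass to a subsequence in which this index is constantly equal to some $l^*$.

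Now I derive the contradiction. Because $v_k^{(l^*)} \in N_{K_{l^*}}(z_k)$ supports $K_{l^*}$ at $z_k$ and $y_k^{(l^*)} \in K_{l^*}$, we have $\langle v_k^{(l^*)}, y_k^{(l^*)} - z_k\rangle \le 0$. Combined with $\langle v_k^{(l^*)}, x_k - z_k\rangle = t_k \langle v_k^{(l^*)}, u_k\rangle \ge t_k/r$, this gives
\[
\langle v_k^{(l^*)}, x_k - y_k^{(l^*)}\rangle \ge t_k/r.
\]
On the other hand, Cauchy--Schwarz yields $\langle v_k^{(l^*)}, x_k - y_k^{(l^*)}\rangle \le \|v_k^{(l^*)}\| \cdot d(x_k, K_{l^*}) \le M s_k \le M t_k/k$. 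Hence $1/r \le M/k$, which is impossible for $k > Mr$. The main obstacle is the stability--compactness lemma; its existence and uniform-bound parts both rely on Condition (1) at $\bar{x}$ together with outer semicontinuity of the normal-cone mappings and a compactness argument along sequences $z_k \to \bar{x}$.
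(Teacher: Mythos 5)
Your argument is correct, but it is a genuinely different route from the paper's. The paper does not prove this lemma from scratch: it simply invokes the result of Lewis--Luke--Malick (\cite{LLM09_lin_conv_alt_proj}, Section 3), which under Condition (1) yields the stronger, translation-uniform estimate $d(x,\cap_i(K_i-z_i))\leq\kappa\sqrt{\sum_i d^2(x,K_i-z_i)}$ for $(x,z)$ near $(\bar{x},0)$, and then specializes to $z_i=0$. You instead give a self-contained compactness-and-contradiction proof: negate the metric inequality to get $x_k\to\bar{x}$ with $d(x_k,K)>k\max_l d(x_k,K_l)$, use the sum formula $N_K(z)=\sum_l N_{K_l}(z)$ (valid near $\bar{x}$ because Condition (1) propagates to nearby points of $K$ by outer semicontinuity of the normal cone maps, exactly the mechanism the paper itself uses in Lemma \ref{lem:Approach-to-x-bar}) together with a uniformly bounded decomposition of the unit normal $u_k=(x_k-P_K(x_k))/d(x_k,K)$, pigeonhole an index $l^*$ with $\langle u_k,v_k^{(l^*)}\rangle\geq 1/r$, and play the supporting-hyperplane inequality at $P_K(x_k)$ against Cauchy--Schwarz to force $1/r\leq M/k$. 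All the steps check out; the only places that deserve a sentence more care in a final write-up are (i) the uniform bound $M$, where the infimal total norm over decompositions need not be attained, so one should take a near-minimizing decomposition before normalizing, and (ii) the observation that $d(x_k,K)>0$, so the division in the final step is legitimate. What your approach buys is a fully elementary, quantitative proof tailored to exactly the statement needed (max on the right-hand side, no translations); what the paper's citation buys is brevity and a stronger conclusion inherited from the literature.
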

\begin{proof}
In \cite[Section 3]{LLM09_lin_conv_alt_proj}, it was proved that
if Condition (1) of Lemma \ref{lem:Approach-to-x-bar} holds, then
there is a constant $\kappa\geq0$ such that 
\[
d\Big(x,\bigcap_{i}(K_{i}-z_{i})\Big)\leq\kappa\sqrt{\sum_{i}d^{2}(x,K_{i}-z_{i})}\mbox{ for all }(x,z)\mbox{ near }(\bar{x},0),
\]
This is easily seen to be stronger than the conclusion since we only
need $z_{i}=0$ for $1\leq i\leq r$.
\end{proof}
We state the key result of this section.
\begin{thm}
\label{thm:Superlinear-conv}(Superlinear convergence) Consider the
problem of finding a point $x\in K$, where $K=\cap_{l=1}^{r}K_{l}$
and $K_{l}\subset\mathbb{R}^{n}$. Suppose Algorithm \ref{alg:Mass-proj-alg}
produces a sequence $\{x_{i}\}$ that converges to a point $\bar{x}\in K$.
Suppose also that the conditions in Lemma \ref{lem:Approach-to-x-bar}
hold, i.e., 
\begin{enumerate}
\item If $\sum_{l=1}^{r}v_{l}=0$ for some $v_{l}\in N_{K_{l}}(\bar{x})$,
then $v_{l}=0$ for all $l=1,\dots,r$.
\end{enumerate}
If $\bar{p}$ in Algorithm \ref{alg:Mass-proj-alg} is sufficiently
large, then we have 
\begin{equation}
\limsup_{i\to\infty}\frac{\|x_{i+\bar{p}}-\bar{x}\|}{\|x_{i}-\bar{x}\|}=0.\label{eq:superlin-conv}
\end{equation}
Moreover, for that choice of $\bar{p}$, if 
\begin{equation}
\mbox{for some }\bar{\epsilon}>0\mbox{, }[K_{l}-\bar{x}]\cap\bar{\epsilon}\mathbb{B}=T_{K_{l}}(\bar{x})\cap\bar{\epsilon}\mathbb{B}\mbox{ for all }l=1,\dots,r,\label{eq:cone-condn-1}
\end{equation}
 then the convergence of $\{x_{i}\}$ to $\bar{x}$ is finite.\end{thm}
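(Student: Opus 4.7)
The plan is to combine three ingredients already developed in the excerpt. First, from Condition~(1) together with the Fej\'er monotonicity of $\{x_i\}$ (which for Algorithm~\ref{alg:Mass-proj-alg} follows by an easy adaptation of Theorem~\ref{thm:basic-conv}): Lemma~\ref{lem:Approach-to-x-bar} gives $\|P_{N_K(\bar x)}(x_i-\bar x)\|/\|x_i-\bar x\|\to 1$, while Lemmas~\ref{lem:loc-metr-ineq-condn} and~\ref{lem:iterate-estimate} together give $\|x_i-\bar x\|\le\beta\max_l d(x_i,K_l)$ for all large~$i$. Second, Theorem~\ref{thm:radiality} implies that for any $\epsilon>0$ and all sufficiently large $k$, each supporting hyperplane $H_k^{(l)}$ produced at stage $k$ satisfies $d(\bar x,H_k^{(l)})\le\epsilon\|x_k^{(l)}-\bar x\|$, while outer semicontinuity of the normal cone places the unit normal $a_k^{(l)}/\|a_k^{(l)}\|$ within $\epsilon$ of $N_{K_l}(\bar x)$. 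Third, $\bar x\in K\subset\tilde F_i$ for every $i$, so nonexpansiveness of projection onto $\tilde F_{i+\bar p}$ immediately gives the baseline $\|x_{i+\bar p}-\bar x\|\le\|x_{i+\bar p-1}-\bar x\|$; the task is to upgrade this to a strict, superlinear improvement.

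The core of the argument is a comparison between $\tilde F_{i+\bar p}$ and an idealised polyhedron $\bar F_{i+\bar p}$ whose defining hyperplanes share the same unit normals but pass exactly through $\bar x$. Theorem~\ref{thm:radiality} controls the perturbation between $\tilde F_{i+\bar p}$ and $\bar F_{i+\bar p}$ as $o(\|x_{i+\bar p-1}-\bar x\|)$. I would then analyse the KKT conditions of $x_{i+\bar p}=P_{\tilde F_{i+\bar p}}(x_{i+\bar p-1})$, writing $x_{i+\bar p-1}-x_{i+\bar p}$ as a non-negative combination $\sum_{k,l}\mu_{k,l}a_k^{(l)}$ of hyperplane normals. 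This is exactly the setup analysed in the proof of Lemma~\ref{lem:Approach-to-x-bar}, in which Propositions~\ref{prop:pointed-cone-modulus} and~\ref{prop:limit-est-w-pointed-cones} bound such a combination from below in terms of its projection onto $\bigcap_l\aspan(K_l)$. Combined with the conclusion of Lemma~\ref{lem:Approach-to-x-bar} (so that $x_{i+\bar p-1}-\bar x$ lies asymptotically in $N_K(\bar x)=\sum_l N_{K_l}(\bar x)$), this shows that projection onto $\bar F_{i+\bar p}$ would land at $\bar x$ once the collected normals asymptotically span $N_K(\bar x)$, which is secured by choosing $\bar p$ large enough. Passing back from $\bar F_{i+\bar p}$ to $\tilde F_{i+\bar p}$ via the perturbation estimate then yields $\|x_{i+\bar p}-\bar x\|=o(\|x_{i+\bar p-1}-\bar x\|)\le o(\|x_i-\bar x\|)$, giving~\eqref{eq:superlin-conv}. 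For the finite convergence statement under~\eqref{eq:cone-condn-1}, Proposition~\ref{rem:Use-Moreau} shows that once iterates lie in $\mathbb{B}_{\bar\epsilon}(\bar x)$ every supporting hyperplane passes exactly through $\bar x$, so $\tilde F_{i+\bar p}=\bar F_{i+\bar p}$ locally; the perturbation vanishes and the argument above forces $x_{i+\bar p}=\bar x$ exactly.

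The main obstacle I anticipate is justifying the ``spanning'' claim and the quantitative perturbation bound between $\tilde F_{i+\bar p}$ and $\bar F_{i+\bar p}$. Concretely one must show that, for $\bar p$ sufficiently large, the normals $\{a_k^{(l)}\}$ collected over a $\bar p$-window jointly witness every non-negative combination in $\sum_l N_{K_l}(\bar x)$ needed to KKT-match $x_{i+\bar p-1}-\bar x$, and that the pointedness modulus from Proposition~\ref{prop:pointed-cone-modulus} prevents cancellations among near-parallel normals belonging to different $N_{K_l}(\bar x)$. This is a delicate compactness argument closely parallel to Claims~1--4 in the proof of Lemma~\ref{lem:Approach-to-x-bar}, and it is where I expect the bulk of the technical work to lie.
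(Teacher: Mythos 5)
You have assembled the right supporting ingredients (Theorem \ref{thm:radiality}, Lemmas \ref{lem:Approach-to-x-bar}, \ref{lem:loc-metr-ineq-condn} and \ref{lem:iterate-estimate}, Fej\'{e}r monotonicity, and Proposition \ref{rem:Use-Moreau} for the conical case), but the central mechanism of your argument --- that for $\bar p$ large enough the normals collected over a $\bar p$-window ``span'' $N_K(\bar x)$ so that projection onto the idealised polyhedron $\bar F_{i+\bar p}$ lands exactly at $\bar x$ --- is precisely the step you defer, and it is not one that can be secured in the form you state. For $P_{\bar F_{i+\bar p}}(x_{i+\bar p-1})=\bar x$ you would need $x_{i+\bar p-1}-\bar x$ to lie in the cone generated by the finitely many collected normals; Lemma \ref{lem:Approach-to-x-bar} only places $x_{i}-\bar x$ asymptotically in $N_K(\bar x)=\sum_l N_{K_l}(\bar x)$, which is in general a strictly larger cone than any finitely generated subcone obtained from a window of fixed length, since $N_{K_l}(\bar x)$ may have infinitely many extreme rays and each iteration contributes only $r$ new normals taken at nearby (not limiting) points. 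So the ``spanning'' claim fails as stated, and with it the deduction $\|x_{i+\bar p}-\bar x\|=o(\|x_{i+\bar p-1}-\bar x\|)$.

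The paper's proof uses a different and more economical device that you are missing. Let $l_k$ be an index attaining $\max_l d(x_k,K_l)$ and $v_k^*$ the corresponding unit normal. Choose $\bar p$ to be the number of balls of radius $\frac{1}{4\beta}$ needed to cover $S^{n-1}$, where $\beta$ is the constant from Lemma \ref{lem:iterate-estimate}. By pigeonhole there are $i\le j<k\le i+\bar p$ with $\|v_j^*-v_k^*\|\le\frac{1}{2\beta}$. The constraint from step $j$ is still active in $\tilde F_k$, so Theorem \ref{thm:radiality} gives $v_j^{*T}x_k\le\epsilon\|x_j-\bar x\|$ (taking $\bar x=0$); on the other hand the error bound $\|x_k\|\le\beta\,d(x_k,K_{l_k})=\beta\,v_k^{*T}(x_k-P_{K_{l_k}}(x_k))$ together with $\|v_j^*-v_k^*\|\le\frac{1}{2\beta}$ yields $v_j^{*T}x_k\ge\frac{1}{2\beta}\|x_k\|$. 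Combining the two and using Fej\'{e}r monotonicity gives $\|x_{i+\bar p}\|\le 2\beta\epsilon\|x_i\|$ with $\epsilon\to0$, which is \eqref{eq:superlin-conv}; under \eqref{eq:cone-condn-1} the upper estimate becomes $v_j^{*T}x_k\le0$ by Proposition \ref{rem:Use-Moreau}, forcing $x_k=\bar x$. Note in particular that $\bar p$ is determined by the covering number of the sphere at scale $\frac{1}{4\beta}$, not by any spanning property of the accumulated normals, and that no comparison between $\tilde F_{i+\bar p}$ and an idealised polyhedron is needed. Without this (or an equivalent) quantitative argument, your proposal does not establish the theorem.
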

\begin{proof}
In Algorithm \ref{alg:Mass-proj-alg}, let $l_{i}\in\{1,\dots,r\}$
be such that 
\[
l_{i}\in\arg\max_{1\leq l\leq r}\|x_{i}-P_{K_{l}}(x_{i})\|=\arg\max_{1\leq l\leq r}d(x_{i},K_{l}).
\]
Let $v_{i}^{*}$ be the unit vector $v_{i}^{*}:=\frac{x_{i}-P_{K_{l_{i}}}(x_{i})}{\|x_{i}-P_{K_{l_{i}}}(x_{i})\|}$.
In other words, $v_{i}^{*}$ is the unit vector of the hyperplane
that separates $x_{i}$ from $K_{l_{i}}$. 

Without loss of generality, suppose that $\bar{x}=0$. Suppose $\beta>0$
is chosen such that \eqref{eq:iterate-est} holds. From Lemma \ref{lem:loc-metr-ineq-condn},
we deduce that $\{K_{l}\}_{l=1}^{r}$ satisfies the local metric inequality
at $\bar{x}$.

The sphere $S^{n-1}:=\{w\in\mathbb{R}^{n}\mid\|w\|=1\}$ is compact.
Suppose $\bar{p}$ is such that we can cover $S^{n-1}$ with $\bar{p}$
balls of radius $\frac{1}{4\beta}$. 

Next, among the vectors $\{v_{i}^{*},v_{i+2}^{*},\dots,v_{i+\bar{p}}^{*}\}$,
there must exist $j$ and $k$ such that $i\leq j<k\leq i+\bar{p}$,
and $v_{j}^{*}$ and $v_{k}^{*}$ belong to the same ball of radius
$\frac{1}{4\beta}$ covering $S^{n-1}$. We thus have $\|v_{j}^{*}-v_{k}^{*}\|\leq\frac{1}{2\beta}$.
We can assume, using Theorem \ref{thm:radiality}, that $i$ is large
enough so that 
\begin{equation}
v_{j}^{*T}x_{k}\leq\epsilon\|x_{j}\|.\label{eq:epsilon-x1}
\end{equation}
On the other hand, if $i$ is large enough, we can apply Lemma \ref{lem:iterate-estimate}
to get
\begin{eqnarray}
v_{j}^{*T}x_{k} & = & v_{k}^{*T}x_{k}+(v_{j}^{*}-v_{k}^{*})^{T}x_{k}\nonumber \\
 & \geq & d(x_{k,}K_{l_{k}})-\frac{1}{2\beta}\|x_{k}\|\nonumber \\
 & \geq & \frac{1}{2\beta}\|x_{k}\|.\label{eq:beta-x2}
\end{eqnarray}
The methods in Theorem \ref{thm:basic-conv} can be easily adapted
to prove that the sequence $\{x_{i}\}$ is Fej\'{e}r monotone with
respect to $K$. The inequalities \eqref{eq:epsilon-x1} and \eqref{eq:beta-x2},
and the Fej\'{e}r monotonicity of $\{x_{i}\}$ combine to give 
\[
\|x_{i+\bar{p}}\|\leq\|x_{k}\|\leq2\beta\epsilon\|x_{j}\|\leq2\beta\epsilon\|x_{i}\|.
\]
As the factor $\epsilon$ can be made arbitrarily close to $0$, we
proved \eqref{eq:superlin-conv}.

Next, under the added condition \eqref{eq:cone-condn-1}, the formula
\eqref{eq:epsilon-x1} becomes $v_{j}^{*T}x_{k}\leq0$ instead by
an application of Moreau's Theorem (See Proposition \ref{rem:Use-Moreau}),
and the same steps show us that $\frac{1}{2\beta}\|x_{k}\|\leq0$,
which forces $x_{k}=0$, or $x_{k}=\bar{x}$. 
\end{proof}
Even though the choice of $\bar{p}$ in the proof of Theorem \ref{thm:Superlinear-conv}
is impractical, Theorem \ref{thm:Superlinear-conv} gives justification
that the idea of supporting hyperplanes and quadratic programming
can lead to fast convergence.

\subsection{Alternative estimates}

We close this section with a result that might be helpful for estimating
the distance of an iterate to the limit $\bar{x}$. 
\begin{lem}
\label{lem:conv-est}(Alternative estimate) Let $K:=\cap_{l=1}^{r}K_{l}$,
where $K_{l}$ are closed convex sets in $\mathbb{R}^{n}$ for $1\leq l\leq r$.
Let hyperplanes $H_{j}:=\{x\mid\left\langle a_{j},x\right\rangle =b_{j}\}$,
points $a_{j}\in\mathbb{R}^{n}$ and $\tilde{x}_{j}\in\mathbb{R}^{n}$,
where $j=1,\dots,J$, be such that $\|a_{j}\|=1$. Suppose $x^{*}\in\mathbb{R}^{n}$
lies on the hyperplanes $H_{j}$. Let $\bar{x}\in K$ be such that 
\begin{enumerate}
\item Each hyperplane $H_{j}$ is a supporting hyperplane to some $K_{l_{j}}$
and $K_{l_{j}}\subset\{x\mid\left\langle a_{j},x\right\rangle \leq b_{j}\}$. 
\item $\tilde{x}_{j}\in H_{j}\cap K_{l_{j}}$, 
\item $\max_{j}\|\tilde{x}_{j}-\bar{x}\|=L$,
\item There is some $\epsilon>0$ such that $-\epsilon\leq\frac{\left\langle a_{j},\bar{x}-\tilde{x}_{j}\right\rangle }{\|\bar{x}-\tilde{x}_{j}\|}\le0$
for all $j=1,\dots,J$.
\end{enumerate}
Let $\underline{\sigma}$ be the smallest singular value of the matrix
$A\in\mathbb{R}^{n\times J}$, where the $j$th column of $A$ is
$a_{j}$. Let $S$ be $\mbox{span}\{a_{1},\dots,a_{J}\}$. Let $\alpha$
be such that 
\begin{equation}
\|M\|_{\infty,2}\leq\alpha\|M\|_{2,2}\mbox{ for all }M\in\mathbb{R}^{n\times J},\label{eq:matrix-norm-ratio}
\end{equation}
where $\|M\|_{p,q}:=\sup_{v\neq0}\frac{\|Mv\|_{q}}{\|v\|_{p}}$. Then
\begin{equation}
\|P_{S}(x^{*}-\bar{x})\|\leq L\epsilon\alpha\underline{\sigma}^{-1}.\label{eq:proj-less-than-orig-bdd}
\end{equation}
\end{lem}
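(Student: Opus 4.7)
The plan is to convert the scalar hypotheses into a linear system $A^T y = c$ for $y := P_S(x^\ast - \bar x)$, and then invert it using the pseudoinverse, controlling the relevant norms via the SVD and the norm-ratio constant $\alpha$.

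First I would exploit the hyperplane data. Since both $x^\ast$ and $\tilde x_j$ lie on $H_j$, one has $\langle a_j,x^\ast\rangle = b_j = \langle a_j,\tilde x_j\rangle$, so $\langle a_j,x^\ast - \bar x\rangle = \langle a_j,\tilde x_j - \bar x\rangle$ for every $j$. Writing $x^\ast - \bar x = y + w$ with $y \in S$ and $w \in S^\perp$, and using $a_j \in S$, I get $\langle a_j,y\rangle = \langle a_j,\tilde x_j - \bar x\rangle$, i.e.\ $A^T y = c$ where $c_j := \langle a_j,\tilde x_j - \bar x\rangle$. Conditions (3) and (4) then immediately give $|c_j| \leq \epsilon\|\tilde x_j - \bar x\| \leq \epsilon L$, so $\|c\|_\infty \leq \epsilon L$.

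Next I would invert this relation. Because $y \in S = \mathrm{range}(A)$ and $A^T$ is injective on $\mathrm{range}(A)$ (its nonzero singular values coincide with those of $A$, and $\underline\sigma$ is the smallest such value), the minimum-norm solution of $A^T y = c$ is $y = (A^T)^+ c$, and this is exactly our $y$. Then
\[
\|y\|_2 = \|(A^T)^+ c\|_2 \leq \|(A^T)^+\|_{\infty,2}\,\|c\|_\infty
\]
by the very definition of the $(\infty,2)$-operator norm. Applying the hypothesis \eqref{eq:matrix-norm-ratio} with $M = (A^T)^+ \in \mathbb{R}^{n \times J}$ gives $\|(A^T)^+\|_{\infty,2} \leq \alpha\,\|(A^T)^+\|_{2,2}$, and $\|(A^T)^+\|_{2,2} = \underline\sigma^{-1}$ is the largest singular value of the pseudoinverse, equal to the reciprocal of the smallest nonzero singular value of $A$. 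Chaining the three inequalities yields the stated bound $\|P_S(x^\ast - \bar x)\| \leq L\epsilon\alpha\underline\sigma^{-1}$.

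There is no real obstacle here: once the identity $A^T y = c$ is observed, the rest is linear algebra. The only place warranting a sentence of care is the claim that $\|(A^T)^+\|_{2,2} = \underline\sigma^{-1}$, which follows from a direct SVD computation and is meaningful only when $\underline\sigma > 0$ (which is implicit in the appearance of $\underline\sigma^{-1}$ in the conclusion); this is where the effective invertibility of $A^T$ on $S$ enters.
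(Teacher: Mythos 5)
Your proposal is correct and follows essentially the same route as the paper: both reduce the hypotheses to the componentwise bound $0\leq\langle a_{j},x^{*}-\bar{x}\rangle\leq\epsilon L$ (your $\|c\|_{\infty}\leq\epsilon L$) and then recover $P_{S}(x^{*}-\bar{x})=A(A^{T}A)^{-1}A^{T}(x^{*}-\bar{x})$ — your $(A^{T})^{+}c$ is the same operator — before chaining the $(\infty,2)$-norm, the constant $\alpha$, and $\|A(A^{T}A)^{-1}\|_{2,2}=\underline{\sigma}^{-1}$. The only difference is presentational (pseudoinverse/minimum-norm language versus the paper's ``standard linear least squares''), and your remark that $\underline{\sigma}>0$ is implicitly assumed is a fair observation about the paper's statement as well.
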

\begin{proof}
Since $x^{*}$ is in $H_{j}$, we have $\left\langle a_{j},x^{*}\right\rangle =b_{j}$.
By Conditions (3) and (4), we get 
\[
-\epsilon L\leq-\epsilon\|\bar{x}-\tilde{x}_{j}\|\leq\left\langle a_{j},\bar{x}-\tilde{x}_{j}\right\rangle \leq0.
\]
Since $\left\langle a_{j},\tilde{x}_{j}\right\rangle =b_{j}=\left\langle a_{j},x^{*}\right\rangle $,
we have 
\begin{equation}
0\leq\left\langle a_{j},x^{*}-\bar{x}\right\rangle \leq\epsilon L.\label{eq:a-T-x-estimate}
\end{equation}
By standard linear least squares, we have 
\begin{eqnarray*}
\|P_{S}(x^{*}-\bar{x})\|_{2} & = & \|A(A^{T}A)^{-1}A^{T}(x^{*}-\bar{x})\|_{2}\\
 & \leq & \|A(A^{T}A)^{-1}\|_{\infty,2}\|A^{T}(x^{*}-\bar{x})\|_{\infty}
\end{eqnarray*}
By \eqref{eq:a-T-x-estimate}, we have $\|A^{T}(x^{*}-\bar{x})\|_{\infty}\leq\epsilon L$.
Furthermore, using standard properties of the singular value decomposition,
we have 
\[
\|A(A^{T}A)^{-1}\|_{\infty,2}\leq\alpha\|A(A^{T}A)^{-1}\|_{2,2}=\alpha\underline{\sigma}^{-1}.
\]
The required bound follows immediately.
\end{proof}
 To apply Lemma \ref{lem:conv-est} to Algorithm \ref{alg:Mass-proj-alg},
note that Condition (3) follows from properties of the projection,
while Condition (4) is an attempt to apply Theorem \ref{thm:radiality}.
Lemma \ref{lem:conv-est} is closer to the spirit of Theorem \ref{thm:Superlin}.
However, the term $\underline{\sigma}^{-1}$ is hard to control, so
we have not had success in applying Lemma \ref{lem:conv-est} so far.

\section{\label{sec:Infeasibility}Infeasibility}

 We now discuss the case where the $K:=\cap_{l=1}^{r}K_{l}=\emptyset$.
For any algorithm producing a sequence $\{x_{i}\}$ in the hope of
converging to a limit $\bar{x}\in K$, there are three possibilities:
\begin{enumerate}
\item An infinite sequence cannot be produced because the intersection of
the halfspaces is an empty set at some point.
\item The sequence $\{x_{i}\}$ contains a cluster point $\bar{x}$.
\item The sequence $\{x_{i}\}$ does not contain a cluster point $\bar{x}$. 
\end{enumerate}
We first show that case 2 is not possible for Algorithm \ref{alg:closest-pt-proj}
in the case of strong cluster points.
\begin{thm}
\label{thm:No-cluster-pt}(No cluster point) For Algorithm \ref{alg:closest-pt-proj}
using \eqref{eq:mass-proj}, in the case where $K=\emptyset$, the
sequence $\{x_{i}\}$ cannot contain a strong cluster point.\end{thm}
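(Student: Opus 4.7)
The plan is to adapt Step 2 of the proof of Theorem \ref{thm:Strong-conv-result}. That step already encodes the key mechanism: a strong cluster point $z \notin K$ with $\|z\|=\lim_i\|x_i\|$ cannot persist, because the supporting hyperplane to $K_{l^*}$ produced near $z$, combined with the first-order optimality cut through $z$, forces subsequent iterates to have norm strictly larger than $\|z\|$. In the infeasibility regime $K=\emptyset$, every strong cluster point automatically lies outside some $K_{l^*}$, so the same mechanism applies. The mass choice \eqref{eq:mass-proj} helps here because the relevant supporting hyperplane is generated at every iteration, not merely cyclically.

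First I would assume, for contradiction, that $\bar{x}$ is a strong cluster point, say $x_{i_k}\to\bar{x}$, and translate so $x_0=0$. Since $F_{i+1}\subset F_i$ in \eqref{eq:dykstra-all-planes} and $x_i=P_{F_i}(0)$, the sequence $\{\|x_i\|\}$ is nondecreasing; as $\|x_{i_k}\|\to\|\bar{x}\|$, in fact $\|x_i\|\to\|\bar{x}\|$. Pick $l^*$ with $\bar{x}\notin K_{l^*}$ (possible because $K=\emptyset$) and set $\bar{y}:=P_{K_{l^*}}(\bar{x})\neq\bar{x}$.

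Next I would exhibit two halfspaces containing $F_{i_k+1}$. The first is the optimality cut $H_1^{(k)}:=\{x\mid\langle x_{i_k},x\rangle\geq\|x_{i_k}\|^2\}$, which contains $F_{i_k}\supset F_{i_k+1}$ by first-order optimality of $x_{i_k}=P_{F_{i_k}}(0)$. The second, $H_2^{(k)}$, is the halfspace added to $F_{i_k+1}$ at iteration $i_k+1$ from projecting $x_{i_k}$ onto $K_{l^*}$; under \eqref{eq:mass-proj} this halfspace is generated at every iteration. Therefore $\|x_{i_k+1}\|\geq d(0,H_1^{(k)}\cap H_2^{(k)})$.

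Finally I would pass to the limit. By continuity of $P_{K_{l^*}}$, $H_1^{(k)}$ and $H_2^{(k)}$ converge to $H_1^*:=\{x\mid\langle\bar{x},x\rangle\geq\|\bar{x}\|^2\}$ and $H_2^*:=\{x\mid\langle\bar{x}-\bar{y},x\rangle\leq\langle\bar{x}-\bar{y},\bar{y}\rangle\}$. The geometric heart of the argument is $d(0,H_1^*\cap H_2^*)>\|\bar{x}\|$: the point $\bar{x}$ is the unique closest point to $0$ on $\partial H_1^*$, while $\langle\bar{x}-\bar{y},\bar{x}\rangle-\langle\bar{x}-\bar{y},\bar{y}\rangle=\|\bar{x}-\bar{y}\|^2>0$ shows $\bar{x}\notin H_2^*$, so any $p\in H_1^*\cap H_2^*$ with $\|p\|=\|\bar{x}\|$ would have to equal $\bar{x}$, a contradiction (with the convention $d(0,\emptyset)=+\infty$). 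Continuous dependence of the distance on the halfspace data then yields $d(0,H_1^{(k)}\cap H_2^{(k)})\geq\|\bar{x}\|+\eta$ for some $\eta>0$ and all $k$ sufficiently large, contradicting $\|x_i\|\to\|\bar{x}\|$. The main obstacle is this continuity step, which is exactly what is invoked in Step 2 of Theorem \ref{thm:Strong-conv-result} (stated there without fully detailed proof); the degenerate case $H_1^*\cap H_2^*=\emptyset$ is handled in the same spirit, since it would force $F_{i_k+1}$ to eventually be empty or have distance to $0$ tending to $+\infty$, either of which already contradicts the existence of an infinite sequence with a bounded strong cluster point.
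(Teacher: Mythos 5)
Your proposal is correct, but it is not the route the paper takes; the paper's own proof of Theorem \ref{thm:No-cluster-pt} is substantially leaner. You import the full two-halfspace mechanism of Step 2 of Theorem \ref{thm:Strong-conv-result}: the optimality cut $\{x\mid\langle x_{i_k},x\rangle\geq\|x_{i_k}\|^{2}\}$, the separating halfspace from $K_{l^{*}}$, the monotonicity of $\|x_i\|$, and the (lower semi-)continuity of $d\big(0,H_1\cap H_2\big)$ in the halfspace data. The paper instead uses only the second ingredient: with $a_{x}=x-P_{K_{l^*}}(x)$ and $b_{x}=\langle a_{x},P_{K_{l^*}}(x)\rangle$, every later iterate satisfies $\langle a_{x_{i}},x_{j}\rangle\leq b_{x_{i}}$ for $j>i$, so passing to the limit along the strongly convergent subsequence (using only the continuity of $x\mapsto(a_{x},b_{x})$, i.e.\ nonexpansiveness of $P_{K_{l^*}}$) forces $\langle a_{\tilde{x}},\tilde{x}\rangle\leq b_{\tilde{x}}$, while $\langle a_{\tilde{x}},\tilde{x}\rangle-b_{\tilde{x}}=\|a_{\tilde{x}}\|^{2}>0$ since $\tilde{x}\notin K_{l^*}$ --- an immediate contradiction that needs neither $H_1^{(k)}$ nor the norm monotonicity. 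Your version is valid (the step you flag as the main obstacle is genuine lower semicontinuity: bounded minimizers $p_k$ of the limiting two-halfspace problems have weak cluster points lying in $H_1^{*}\cap H_2^{*}$, whose points all have norm strictly exceeding $\|\bar{x}\|$), and it has the virtue of quantifying how far the next iterate must move; but it is rigorous only to the same degree as the unproven continuity claim in Theorem \ref{thm:Strong-conv-result}, whereas the paper's one-halfspace argument closes that gap entirely for this theorem. You should also note explicitly that all later iterates, not just $x_{i+1}$, lie in the halfspace generated at $x_{i}$ (this is what lets one compare $x_{i_k}$ with $x_{i_{k+1}}$ along the subsequence), and dispose of the degenerate case $\bar{x}=x_{0}$ separately.
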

\begin{proof}
Suppose on the contrary that $\{x_{i}\}$ contains a strong cluster
point, say $\tilde{x}$. Since $\tilde{x}\notin K$, we assume without
loss of generality that $\tilde{x}\notin K_{1}$. Then let $z:=P_{K_{1}}(\tilde{x})$,
and $v=\tilde{x}-z$. Let $a_{x}=x-P_{K_{1}}(x)$ and $b_{x}=\left\langle a_{x},P_{K_{1}}(x)\right\rangle $.
By elementary properties of the projection, we have $\left\langle a_{\tilde{x}},\tilde{x}\right\rangle >b_{\tilde{x}}$.
The parameters $a_{x}$ and $b_{x}$ depend continuously on $x$.
By the workings of Algorithm \ref{alg:closest-pt-proj}, we have 
\[
\left\langle a_{x_{i}},x_{i+1}\right\rangle \leq b_{x_{i}}.
\]
As we take limits as $i\to\infty$, we get $\left\langle a_{\tilde{x}},\tilde{x}\right\rangle \leq b_{\tilde{x}}$.
This is a contradiction.
\end{proof}
One can easily check that Case 3 can happen. Consider the sets $K_{1}$
and $K_{2}$ defined by 
\begin{eqnarray*}
K_{1} & = & \{(x,y)\in\mathbb{R}^{2}\mid y\geq e^{-x}\},\\
\mbox{ and }K_{2} & = & \{(x,y)\in\mathbb{R}^{2}\mid y\leq-e^{-x}\}.
\end{eqnarray*}
If $x_{0}$ is chosen to be the origin in Algorithm \ref{alg:closest-pt-proj},
then the iterates $x_{i}$ cannot converge to a limit by Theorem \ref{thm:No-cluster-pt},
and therefore must move in the direction of the positive $x$ axis.
We understand more about such behavior with the result below.
\begin{thm}
(Recession directions) If $\{x_{i}\}$ is a sequence of iterates for
Algorithm \ref{alg:closest-pt-proj} using \eqref{eq:mass-proj} in
the case where $K=\emptyset$ and $X=\mathbb{R}^{n}$, then any cluster
point of $\{\frac{x_{i}}{\|x_{i}\|}\}$ must lie in $R(K_{l})$, the
recession cone of $K_{l}$, for all $l=1,\dots,r$.\end{thm}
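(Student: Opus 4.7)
The plan is to prove that the iterates must escape to infinity, and then that any cluster direction $d$ of $\{x_i/\|x_i\|\}$ is fixed by the projection onto each $R(K_l)$. First, $\|x_i-x_0\|$ is monotone non-decreasing by the same argument as in the proof of Theorem~\ref{thm:Strong-conv-result} (the sets $F_i$ are nested and $x_i=P_{F_i}(x_0)$), so if $\{x_i\}$ stayed bounded it would admit a strong cluster point, contradicting Theorem~\ref{thm:No-cluster-pt}. Hence $\|x_i\|\to\infty$; passing to a subsequence, $x_{n_k}/\|x_{n_k}\|\to d$ for some unit vector $d$. Fix any $l\in\{1,\dots,r\}$ and set $y_k:=P_{K_l}(x_{n_k})\in K_l$.

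The second ingredient is the classical asymptotic-projection fact: for every nonempty closed convex $C\subset\mathbb{R}^n$ and every $z\in\mathbb{R}^n$,
\[
\frac{P_C(tz)}{t}\ \longrightarrow\ P_{R(C)}(z)\quad\text{as }t\to\infty.
\]
This follows from the Painlev\'e--Kuratowski convergence $C/t\to R(C)$ of the rescaled sets together with the continuity of the projection under such set convergence, and can also be verified elementarily from the optimality conditions for $P_C$. Combining this with the $1$-Lipschitzness of $P_{K_l}$ and with $x_{n_k}/\|x_{n_k}\|\to d$ yields $y_k/\|x_{n_k}\|\to w$, where $w:=P_{R(K_l)}(d)$.

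The supporting-halfspace constraint now forces a lower bound on $\langle w,d\rangle$. Because $l\in J_{n_k+1}=\{1,\dots,r\}$ in the mass-projection variant \eqref{eq:mass-proj}, the inequality $\langle a_{n_k+1}^{(l)},x\rangle\leq b_{n_k+1}^{(l)}$ is baked into every $F_i$ with $i\geq n_k+1$. Using $a_{n_k+1}^{(l)}=x_{n_k}-y_k$ and $b_{n_k+1}^{(l)}=\langle a_{n_k+1}^{(l)},y_k\rangle$, this reads $\langle x_{n_k}-y_k,\,x_{n_{k'}}-y_k\rangle\leq 0$ for every $k'>k$. Dividing by $\|x_{n_k}\|\,\|x_{n_{k'}}\|$ and letting $k'\to\infty$ (so that $y_k/\|x_{n_{k'}}\|\to 0$ for fixed $k$) gives $\langle x_{n_k}/\|x_{n_k}\|,\,d\rangle\leq\langle y_k/\|x_{n_k}\|,\,d\rangle$; sending $k\to\infty$ then yields $\langle w,d\rangle\geq\langle d,d\rangle=1$.

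Finally, $R(K_l)$ is a closed convex cone containing the origin, so by Moreau's decomposition at $d$ we have the cone-projection identity $\langle P_{R(K_l)}(d),d\rangle=\|P_{R(K_l)}(d)\|^2$, whence $\|w\|^2\geq 1$; non-expansivity of $P_{R(K_l)}$ against $0\in R(K_l)$ simultaneously forces $\|w\|\leq\|d\|=1$. Thus $\|w\|=\|d\|=1$ and $\|d-w\|^2=\|d\|^2-2\langle d,w\rangle+\|w\|^2=1-2\|w\|^2+\|w\|^2=0$, so $d=w\in R(K_l)$, as required for each $l$. The chief technical obstacle is the asymptotic-projection identity invoked in the second paragraph; this is standard in convex analysis but needs some care to make fully self-contained, either through a short proof via the optimality conditions for $P_{K_l}(tz)$ or via a citation to set-convergence results.
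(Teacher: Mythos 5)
Your proof is correct, but it takes a genuinely different route from the paper's. The paper argues by contradiction: assuming a cluster direction $v\notin R(K_{l})$, it constructs a linear functional $w$ that is bounded above on $K_{l}$ (a normal at the last point of $K_{l}$ along a ray in direction $v$) yet satisfies $w^{T}v>0$, and combines this with the inequality $u_{i}^{T}v\leq0$ extracted from the retained halfspaces to force $u^{T}c=-\infty$ for $c\in K_{l}$, which is absurd. You instead argue directly: you identify $\lim_{k}P_{K_{l}}(x_{n_{k}})/\|x_{n_{k}}\|$ as $w=P_{R(K_{l})}(d)$ via the asymptotic-projection fact $P_{C}(tz)/t\to P_{R(C)}(z)$, extract $\langle w,d\rangle\geq1$ from the same retained halfspaces, and close with Moreau's decomposition and nonexpansiveness to pin $d=w\in R(K_{l})$. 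Your route buys two things: it avoids the paper's somewhat delicate construction of the functional $w$ (existence of the maximal $\gamma$ and of a normal with $w^{T}v>0$), and it makes explicit the step $\|x_{i}\|\to\infty$ (via monotonicity of $\|x_{i}-x_{0}\|$ and Theorem \ref{thm:No-cluster-pt}), which the paper uses only implicitly in its final $\liminf=-\infty$ computation. The price is the imported asymptotic-projection lemma, which you rightly flag as the main technical debt; it does hold for nonempty closed convex $C$ (since $C/t\to R(C)$ in the Painlev\'e--Kuratowski sense as $t\to\infty$, and projections onto convex sets converge along with the sets, with the $1$-Lipschitz bound handling the perturbation from $\|x_{n_{k}}\|d$ to $x_{n_{k}}$), but it is the one ingredient the paper's more elementary, self-contained argument does not need.
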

\begin{proof}
Let $\{\frac{\tilde{x}_{i}}{\|\tilde{x}_{i}\|}\}$ be a subsequence
of $\{\frac{x_{i}}{\|x_{i}\|}\}$ which has a limit $v$. We show
that such a limit has to lie in $R(K_{l})$. Seeking a contradiction,
suppose that $v\notin R(K_{l})$. 

We show that there is a unit vector $w\in\mathbb{R}^{n}$ and $M\in\mathbb{R}$
such that $w^{T}c\leq M$ for all $c\in K_{l}$ and $w^{T}v>0$. Take
any point $y\in K_{l}$. Since $v\notin R(K_{l})$, there is some
$\gamma\geq0$ such that $y+\gamma v\in K_{l}$, but $y+\gamma^{\prime}v\notin K_{l}$
for all $\gamma^{\prime}>\gamma$. It follows that there exists a
unit vector $w\in N_{K_{l}}(y+\gamma v)$ such that $w^{T}v>0$, and
we can take $M=w^{T}(y+\gamma v)$. Since $w^{T}v>0$, we shall assume
that $w^{T}\tilde{x}_{i}>M$ for all $i$.

Let $c_{i}:=P_{K_{l}}(\tilde{x}_{i})$, and let $u_{i}$ be the unit
vector in the direction of $\tilde{x}_{i}-c_{i}$. We write $\tilde{x}_{i}-c_{i}=\alpha_{i}u_{i}$.
We have 
\begin{eqnarray*}
u_{i}^{T}c_{i} & = & u_{i}^{T}(\tilde{x}_{i}-\alpha_{i}u_{i})\\
 & = & u_{i}^{T}\tilde{x}_{i}-\alpha_{i}.
\end{eqnarray*}
Also
\begin{eqnarray*}
\alpha_{i}w^{T}u_{i} & = & w^{T}\tilde{x}_{i}-w^{T}c_{i}\\
 & \geq & w^{T}\tilde{x}_{i}-M.
\end{eqnarray*}
Since $\alpha_{i}w^{T}u_{i}=w^{T}(\tilde{x}_{i}-c_{i})>M-M=0$, we
have $w^{T}u_{i}>0$, and hence $\alpha_{i}\geq w^{T}\tilde{x}_{i}-M$.
Therefore, 
\[
u_{i}^{T}c_{i}\leq u_{i}^{T}\tilde{x}_{i}-w^{T}\tilde{x}_{i}+M.
\]
By the workings of Algorithm \ref{alg:closest-pt-proj}, we have $u_{i}^{T}\tilde{x}_{i}>u_{i}^{T}c_{i}$
and $u_{i}^{T}\tilde{x}_{j}\leq u_{i}^{T}c_{i}$ for all $j>i$. This
gives $u_{i}^{T}(\tilde{x}_{j}-\tilde{x}_{i})\leq0$, which gives
$u_{i}^{T}v\leq0$. 

Let $u$ be a cluster point of $\{u_{i}\}$. We can consider subsequences
so that $\lim_{i\to\infty}u_{i}$ exists. For any point $c\in K_{l}$,
we have 
\begin{eqnarray*}
u^{T}c & = & \lim_{i\to\infty}u_{i}^{T}c\\
 & \leq & \liminf_{i\to\infty}u_{i}^{T}c_{i}\\
 & \leq & \liminf_{i\to\infty}[u_{i}^{T}\tilde{x}_{i}-w^{T}\tilde{x}_{i}+M]\\
 & = & \liminf_{i\to\infty}\|\tilde{x}_{i}\|\left(u_{i}^{T}\frac{\tilde{x}_{i}}{\|\tilde{x}_{i}\|}-w^{T}\frac{\tilde{x}_{i}}{\|\tilde{x}_{i}\|}\right)+M\\
 & = & \liminf_{i\to\infty}\|\tilde{x}_{i}\|[u_{i}^{T}v-w^{T}v]+M\\
 & = & -\infty,
\end{eqnarray*}
which is absurd. The contradiction gives $v\in R(K_{l})$. 
\end{proof}

\section{Conclusion}

In this paper, we focus on the theoretical properties of using supporting
hyperplanes and quadratic programming to accelerate the method of
alternating projections and its variants. It appears that as long
as a separating hyperplane is obtained for $K$ and the quadratic
programs are not too big, it is a good idea to solve the associated
quadratic program to obtain better iterates. Other issues to consider
in a practical implementation would be to either remove or combine
loose constraints so that the size of the intermediate quadratic programs
do not get too big. The ideas in \cite{Kiwiel95} for example can
be useful. It remains to be seen whether the theoretical properties
in this paper translate to effective algorithms in practice.

\section{Acknowledgments}

We thank Boris Mordukhovich for asking the question of whether the
alternating projection algorithm can achieve superlinear convergence
at the ISMP in 2012 during Russell Luke's talk, which led to the idea
of considering supporting hyperplanes.

\bibliographystyle{amsalpha}
\bibliography{refs}

\end{document}